\title[Nilpotent $p$-local finite groups]
{Nilpotent $p$-local finite groups}
\author{Jos\'e Cantarero}
\thanks{The first author is partially supported by FEDER/MEC grant MTM2010-20692.}
\address{
\hfill\break Department of Mathematics\\
\hfill\break Stanford University\\
\hfill\break Stanford, California 94305\\
\hfill\break USA.}
\email{cantarer@stanford.edu}
\author{J\'er\^ome Scherer}
\thanks{The second author is partially supported by FEDER/MEC grant MTM2010-20692.}
\address{
\hfill\break SB MATHGEOM \\
\hfill\break EPFL\\
\hfill\break MA B3 455, Station 8\\
\hfill\break CH--1015 Lausanne\\
\hfill\break Switzerland.}
\email{jerome.scherer@epfl.ch}
\author{Antonio Viruel}
\thanks{The third author is partially supported by FEDER/MCI grant MTM2010--18089,
and Junta de Andaluc{\'\i}a grants FQM--0213 and P07--FQM--2863}
\address{
\hfill\break Departamento de \' Algebra, Geometr\'\i a y Topolog\'\i a\\
\hfill\break Universidad de M\' alaga\\
\hfill\break AP. 59\\
\hfill\break E-29080 M\' alaga\\
\hfill\break Spain.}
\email{viruel@agt.cie.uma.es}
\newcommand{\im}{\operatorname{\rm Im}\nolimits}
\newcommand{\Ker}{\operatorname{\rm Ker}\nolimits}
\newcommand{\A}{\ifmmode{\cal A}\else $\cal A$\fi}
\newcommand{\Ab}{\ifmmode{{\cal A}b}\else ${\cal A}b$\fi}
\newcommand{\F}{{\Bbb F}}
\newcommand{\Z}{{\Bbb Z}}
\newcommand{\Or}{{\mathcal O}}
\newcommand{\Ca}{{\mathcal C}}
\newcommand{\id}{\operatorname{id}\nolimits}
\newcommand{\tor}{\operatorname{Tor}\nolimits}
\newcommand{\Mor}{\operatorname{Mor}\nolimits}
\newcommand{\Inj}{\operatorname{Inj}\nolimits}
\newcommand{\Iso}{\operatorname{Iso}\nolimits}
\newcommand{\Hom}{\operatorname{Hom}\nolimits}
\newcommand{\Rep}{\operatorname{Rep}\nolimits}
\newcommand{\Inn}{\operatorname{Inn}\nolimits}
\newcommand{\Syl}{\operatorname{Syl}\nolimits}
\newcommand{\aut}{\operatorname{Aut}\nolimits}
\newcommand{\Out}{\operatorname{Out}\nolimits}
\renewcommand{\im}{\operatorname{Im}\nolimits}
\def\umono{\ar@{_{(}->}[u]}
\def\uumono{\ar@{_{(}->}[uu]}
\def\lmono{\ar@{_{(}->}[l]}
\def\llmono{\ar@{_{(}->}[ll]}
\newcommand{\p}{{\mathfrak{p}}}
\newcommand{\Ff}{{\mathcal{F}}}
\newcommand{\Ll}{{\mathcal{L}}}
\newcommand{\pcom}{^\wedge_p}
\newtheorem{theorem}{Theorem}[section]
\newtheorem{proposition}[theorem]{Proposition}
\newtheorem{corollary}[theorem]{Corollary}
\newtheorem{lemma}[theorem]{Lemma}
\newtheorem{definition}[theorem]{Definition}
\newtheorem{remark}[theorem]{Remark}
\keywords{nilpotent, $p$-local, fusion, $p$-local finite group, fusion system}
\subjclass{Primary 55R35, 20D15, Secondary 20D20, 20C20, 20N99}
\begin{document}

\begin{abstract}
In this paper we provide characterizations of $p$-nilpotency
for fusion systems and $p$-local finite groups that are
inspired by known result for finite groups. In
particular, we generalize criteria by Atiyah, Brunetti, Frobenius, Quillen,
Stammbach and Tate.
\end{abstract}

\maketitle


\section*{Introduction}

This article is concerned with the concept of $p$-nilpotency. A
finite group $G$ is $p$-nilpotent if its $p$-Sylow subgroup $S$ has a
normal complement $N$ in $G$, that is, the composition $ S \to G \to
G/N $ is an isomorphism. The importance of this property
was highlighted by Henn and Priddy in \cite{MR1289332}:
for almost all finite $p$-groups $S$ and for $G$ a finite group with
$S$ as a Sylow $p$-subgroup, $G$ is $p$-nilpotent.

In the literature, there are characterizations of $p$-nilpotency using
different languages, such as group cohomology, Quillen categories, group theory, and fusion
systems. For example, the restriction map in mod $p$ cohomology $ H^*(G, \F_p)
\to H^*(S, \F_p) $ is an isomorphism if and only if $G$ is $p$-nilpotent
\cite{MR0318339}. The Frobenius $p$-nilpotency criterion in group theory, \cite[10.3.2]{MR648604},
characterizes these groups as those for which $\aut_G(P)$ is a $p$-group for
all subgroups $P$ of $S$.

For our purposes, an interesting characterization of $p$-nilpotency is given by \cite[Satz IV.4.9]{MR0224703}:
two subgroups of $S$ are conjugate in $G$ if and only if they are conjugate in $S$. In the
terminology of fusion systems \cite{MR1992826}, this is equivalent to saying that the fusion
system $\Ff_S(G)$, the category whose objects are subgroups of $S$ and morphisms are maps induced by
conjugations in $G$, equals $\Ff_S(S)$, the category with the same objects but
whose morphisms are maps induced by conjugations in $S$. This characterization
seems appropriate as a definition of $p$-nilpotency for fusion systems, and in particular,
for $p$-local finite groups. This has already been adopted by several authors, starting
with Kessar and Linckelmann in \cite{MR2379788}.

The objects known as $p$-local finite groups were introduced by
Broto, Levi, and Oliver in \cite{MR1992826} as a mean to extract the
essence of the $p$-local structure of a group of course, but also of
a block, or of more exotic occurrences such as the Solomon groups
\cite{MR1943386}. For a finite group $G$, this $p$-local structure
encodes the homotopical properties of the lattice of its
$p$-subgroups under the conjugation action of $G$. It is natural then that
most $p$-local properties enjoyed by finite groups also hold for
$p$-local finite groups.

Following this motivation and the definition for nilpotence of fusion systems
in \cite{MR2379788}, we say that a $p$-local finite group is nilpotent when its
fusion system is that of a $p$-group. Note that the prime $p$ is implicitly given
when we have a fusion system or a $p$-local finite group, so we omit the mention of
$p$. In fact, considering that a $p$-local finite group is an object created to
keep track of the $p$-local structure only, this naming convention agrees with the
well known fact that a finite group is nilpotent if and only it is $p$-nilpotent
for all prime numbers $p$.

According to the previous comments, if $G$ is a finite group with
Sylow subgroup $S$, the associated $p$-local group is nilpotent if and
only if $G$ is $p$-nilpotent. Note that this definition does not depend on the centric linking system $\Ll$,
but Proposition \ref{UniqueLinking} in this paper shows that the saturated fusion system $\Ff _S(S)$
has a unique centric linking system $\Ll _S(S)$ up to isomorphism. In particular, a $p$-local
finite group is nilpotent if and only if it is actually an honest  $p$-group. Sometimes it is more convenient to check
a topological condition than to stay at the level of the algebra of the
fusion data, and having a centric linking system allows for topological
characterizations.

We offer a list of characterizations of nilpotency in this context,
inspired by the work on $p$-nilpotency of honest groups. Apart from
the criteria we have already mentioned above, we also obtain analogues
of the criteria of Tate \cite{MR0160822}, Stammbach \cite{MR0473039},
Atiyah \cite{MR0318339}, and Quillen \cite{MR0318339}.

In the last decade this question has attracted a lot of attention. Let us
mention for example the work of Kessar and Linckelmann, who proved the
$p$-nilpotency theorem of Glaubermann and Thompson for blocks in
\cite{MR1929020} and later for fusion systems, \cite{MR2379788}.

Even more recently a result of Tate has been extended to fusion
systems by the four authors in \cite{DGPS}, providing another proof
of one part of our Theorem~\ref{quillen2}. In \cite{MR2448569},
the Frobenius nilpotency criterion, already present in \cite{MR2336638}
is used to prove Thompson's result in the setting of the theory of $p$-local
finite groups. This criterion is available in this article as Theorem~\ref{global}.

The work carried out in this paper is also of importance because nilpotency
is used to determine sparseness and extreme sparseness
of fusion systems, as defined in \cite{GL}. A fusion
system is sparse if the only proper subfusion system over the same Sylow
$p$-group $S$ is nilpotent. Sometimes sparseness implies
that the fusion system is constrained, a condition that reduces certain
questions about fusion systems to group theoretical ones. Extremely
sparse fusion systems are those for which the only proper subsystem
on any subgroup $Q$ of $S$ is nilpotent, and they are always
constrained.

Nilpotency is also used to define quasicentric subgroups, a concept heavily used in
\cite{MR2167090} and \cite{MR2302515}. A subgroup $P$ of $S$ is $\Ff$-quasicentric if
for all $P' \leq S $ that are $\Ff$-conjugate to $P$ and fully centralized in $\Ff$,
the fusion system $C_{\Ff}(P')$ is nilpotent. Some of the characterizations that we prove
in this paper can be useful to show the sparseness or extreme sparseness of a fusion
system, or to determine whether certain subgroups are quasicentric.

\medskip

The basic definitions of saturated fusion systems and their associated
centric linking systems are given in Section~1. Section 2 is dedicated to
generalities about nilpotent fusion systems and $p$-local finite groups.
Section~3 deals with homological and cohomological characterizations of
nilpotency in low degrees, and some global fusion criteria are studied in
Section 4. A criterion in terms of elementary abelian subgroups is described
in Section 5. Section 6 is then concerned with cohomological criteria in high
degrees, as well as Morava $K$-theory. Finally, Section 7 contains criteria
obtained by using Quillen categories.
\newline

\noindent {\bf Acknowledgements.} We would like to thank Nat\`alia
Castellana for helpful discussions. The authors would like to
thank the Max Plank Institute in Bonn and the Centre de Recerca Matem\`atica
in Barcelona where part of this work was done.

\section{Fusion systems and associated centric linking systems}
This first section is devoted to the basic definitions and
properties we will use from the beautiful theory of $p$-local finite
groups. Our main reference is the article \cite{MR1992826} by Broto,
Levi, and Oliver. The expert can happily skip this section.

Given two finite groups $P$, $Q$, let $\Hom (P,Q)$ denote the set of
group homomorphisms from $P$ to $Q$, and let $\Inj (P,Q)$ denote the
set of monomorphisms. If $P$ and $Q$ are subgroups of a larger group
$G$, then $\Hom _G(P,Q)\subseteq \Inj (P,Q)$ denotes the subset of
homomorphisms induced by conjugation by elements of $G$, and
Aut$_G(P)$ the group of automorphisms of $P$ induced by conjugation in $G$.

\begin{definition}
\label{def fusion}
{\rm A \emph{fusion system} $\Ff$ over a finite $p$-group $S$ is a
category whose objects are the subgroups of $S$, and whose morphism
sets $\Hom _{\Ff}(P,Q)$ satisfy the following conditions:

\begin{itemize}
\item[(a)] $\Hom _S(P,Q) \subseteq \Hom _{\Ff}(P,Q) \subseteq \Inj (P,Q)$ for all $ P,Q \leq S $.
\item[(b)] Every morphism in $\Ff$ factors as an isomorphism in $\Ff$ followed by an inclusion.
\end{itemize}
}
\end{definition}

If $\Ff$ is a fusion system over $S$ and $P,Q \leq S$, then we write
$\Hom _{\Ff}(P,Q)$ for the set of morphisms in $\Ff$ to emphasize
that all morphisms in the category $\Ff$ are homomorphisms. If $\Iso
_{\Ff}(P,Q)$ denotes the subset of isomorphisms in $\Ff$, we see
that $\Iso _{\Ff}(P,Q)= \Hom _{\Ff}(P,Q)$ if $|P|=|Q|$, and $\Iso
_{\Ff}(P,Q) = \emptyset $ otherwise. We also write $\aut _{\Ff}(P) =
\Iso _{\Ff}(P,P) $ and $\Out _{\Ff}(P) = \aut _{\Ff}(P)/\Inn (P)$.
Two subgroups $P,P' \leq S $ are called \emph{$\Ff$-conjugate} if
$\Iso _{\Ff}(P,P') \neq \emptyset $.

Here, and throughout the paper, we write $\Syl _p(G)$ for the set of
Sylow $p$-subgroups of $G$. Also, for any $ P \leq G $ and any $ g
\in N_G(P) $, $c_g \in \aut (P)$ denotes the automorphism $ c_g(x) =
gxg^{-1} $. The fusion systems we consider in this paper will all
be saturated in the following sense, \cite[Definition~1.2]{MR1992826}.

\begin{definition}
\label{def saturated}
{\rm Let $\Ff$ be a fusion system over a $p$-group $S$.

\begin{itemize}
\item A subgroup $ P \leq S $ is \emph{fully centralized} in $\Ff$ if $|C_S(P)| \geq |C_S(P')| $ for all $ P' \leq S $
that are $\Ff$-conjugate to $P$.
\item A subgroup $ P \leq S $ is \emph{fully normalized} in $\Ff$ if $|N_S(P)| \geq |N_S(P')| $ for all $ P' \leq S $
that are $\Ff$-conjugate to $P$.
\item $\Ff$ is a \emph{saturated fusion system} if the following two conditions hold:

\begin{itemize}
\item[(I)] Any $ P \leq S $ which is fully normalized in $\Ff$ is fully centralized in $\Ff$, and
$\aut _S(P) \in \Syl _p(\aut _{\Ff}(P))$.
\item[(II)] If $ P \leq S $ and $\phi \in \Hom _{\Ff}(P,S)$ are such that $\phi P $ is fully centralized, and if we set
\[
N_{\phi} = \{ g \in N_S(P) \mid \phi c_g \phi ^{-1} \in \text{Aut}_S(\phi P) \},
\]
then there is $\bar{\phi} \in \Hom _{\Ff}(N_{\phi},S)$ such that
$\bar{\phi} |_P = \phi$.

\end{itemize}

\end{itemize}}
\end{definition}

The motivating example for this definition is the fusion system of a
finite group $G$. For any $S \in \Syl _p(G)$, we let $\Ff _S(G)$ be
the fusion system over $S$ defined by setting $\Hom _{\Ff
_S(G)}(P,Q) = \Hom _G(P,Q)$ for all $ P,Q \leq S$.

\begin{proposition}{\rm (\cite[Proposition~1.3]{MR1992826})}
\label{prop groupfusion}
Let $G$ be a finite group, and let $S$ be a Sylow $p$-subgroup of
$G$. Then, the fusion system $\Ff _S(G) $ over $S$ is saturated.
Also, a subgroup $ P \leq S $ is fully centralized in $\Ff _S(G)$ if
and only if $C_S(P) \in \Syl _p(C_G(P))$, while $P$ is fully
normalized in $\Ff _S(G)$ if and only if $N_S(P) \in \Syl
_p(N_G(P))$.
\end{proposition}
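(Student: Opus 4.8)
The plan is to verify axioms (I) and (II) directly for $\Ff_S(G)$, deriving the descriptions of fully centralized and fully normalized subgroups as the first step, since the saturation axioms rest on them. The basic observation is that for every $P\le S$ the subgroup $N_S(P)=S\cap N_G(P)$ is a $p$-subgroup of $N_G(P)$ and $C_S(P)=S\cap C_G(P)$ is a $p$-subgroup of $C_G(P)$, while $|N_G(P')|=|N_G(P)|$ and $|C_G(P')|=|C_G(P)|$ for every $G$-conjugate $P'$ of $P$. Hence $|N_S(P')|\le|N_G(P)|_p$ and $|C_S(P')|\le|C_G(P)|_p$ for all $\Ff_S(G)$-conjugates $P'$ of $P$ inside $S$. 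To see that both bounds are attained, I would choose $R\in\Syl_p(N_G(P))$; as $P$ is a normal $p$-subgroup of $N_G(P)$ it lies in $R$, and conjugating $R$ into $S$ by a suitable $g\in G$ yields a conjugate $P'={}^{g^{-1}}P\le S$ with $N_S(P')={}^{g^{-1}}R\in\Syl_p(N_G(P'))$. For centralizers one picks $T\in\Syl_p(PC_G(P))$; since $P\trianglelefteq PC_G(P)$ one has $P\le T$, and since $C_G(P)\trianglelefteq PC_G(P)$ the intersection $T\cap C_G(P)$ is a Sylow $p$-subgroup of $C_G(P)$, so after conjugating $T$ into $S$ we obtain a conjugate $P'\le S$ with $C_S(P')\in\Syl_p(C_G(P'))$. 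Comparing with the bounds, $P$ is fully normalized (resp. fully centralized) exactly when $N_S(P)\in\Syl_p(N_G(P))$ (resp. $C_S(P)\in\Syl_p(C_G(P))$).

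For axiom (I) assume $P$ is fully normalized, so $N_S(P)\in\Syl_p(N_G(P))$. Since $C_G(P)\trianglelefteq N_G(P)$, the intersection $N_S(P)\cap C_G(P)=C_S(P)$ of a Sylow $p$-subgroup with a normal subgroup is again Sylow, hence $P$ is fully centralized. Pushing the Sylow $p$-subgroup $N_S(P)$ forward along the surjection $N_G(P)\twoheadrightarrow N_G(P)/C_G(P)=\aut_G(P)$ produces a Sylow $p$-subgroup of the quotient, which is exactly $\aut_S(P)$; so $\aut_S(P)\in\Syl_p(\aut_G(P))$.

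For axiom (II) write $\phi=c_g|_P$ for some $g\in G$ and set $Q=\phi P={}^gP$, which is assumed fully centralized. Unwinding the definition of $N_\phi$, an element $h\in N_S(P)$ lies in $N_\phi$ if and only if $ghg^{-1}\in N_S(Q)C_G(Q)$, so ${}^gN_\phi$ is a $p$-subgroup of $H:=N_S(Q)C_G(Q)$. Here $C_G(Q)\trianglelefteq H$ with $H/C_G(Q)\cong N_S(Q)/C_S(Q)=\aut_S(Q)$ a $p$-group, and because $Q$ is fully centralized $C_S(Q)\in\Syl_p(C_G(Q))$; a quick order count then gives $N_S(Q)\in\Syl_p(H)$. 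By Sylow's theorem there is $x\in H$ with ${}^x({}^gN_\phi)\le N_S(Q)$; writing $x=sc$ with $s\in N_S(Q)$ and $c\in C_G(Q)$, and using that $s$ normalizes $N_S(Q)$, we get ${}^{cg}N_\phi\le N_S(Q)\le S$. Finally $c$ centralizes $Q={}^gP$, so $c_{cg}|_P=c_g|_P=\phi$, and $\bar\phi:=c_{cg}|_{N_\phi}$ is the desired morphism in $\Ff_S(G)$ extending $\phi$.

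The routine ingredients are the standard facts that a Sylow $p$-subgroup meets a normal subgroup in a Sylow $p$-subgroup and maps onto a Sylow $p$-subgroup of a quotient, and that a normal $p$-subgroup is contained in every Sylow $p$-subgroup. I expect the main obstacle to be the bookkeeping in axiom (II): one has to transport $N_\phi$ correctly across $\phi$, recognize $N_S(Q)C_G(Q)$ as the right ambient group for Sylow's theorem, and --- crucially --- arrange that the conjugating element can be taken in $C_G(Q)$, so that the extension $\bar\phi$ really restricts to $\phi$ on $P$ rather than to some other lift.
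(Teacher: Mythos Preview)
The paper does not give its own proof of this proposition: it is quoted verbatim from \cite[Proposition~1.3]{MR1992826} and used as background, with no proof environment following the statement. So there is nothing in the present paper to compare against.

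That said, your argument is correct and is essentially the standard proof (the one in the cited source). The characterizations of fully normalized and fully centralized subgroups are obtained exactly as you describe, by conjugating a Sylow $p$-subgroup of $N_G(P)$ (respectively $PC_G(P)$) into $S$; axiom~(I) follows from the two facts that a Sylow $p$-subgroup intersects a normal subgroup in a Sylow $p$-subgroup and surjects onto a Sylow $p$-subgroup of a quotient; and your treatment of axiom~(II) --- recognizing that ${}^gN_\phi$ lands in $H=N_S(Q)C_G(Q)$, that $N_S(Q)\in\Syl_p(H)$ because $Q$ is fully centralized, and then using the factorization $x=sc$ to ensure the conjugating element lies in $C_G(Q)$ so that $\bar\phi$ genuinely extends $\phi$ --- is exactly the standard manoeuvre. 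One cosmetic remark: writing the conjugating element as $g^{-1}$ in the first paragraph is slightly awkward notationally, but of course harmless.
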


In order to help motivate the next constructions, we recall some
definitions. If $G$ is a finite group and $p$ is a prime, then a
$p$-subgroup $ P \leq G $ is $p$-centric if $C_G(P) = Z(P) \times
C_G'(P) $, where $ C_G'(P)$ has order prime to $p$. For any $P,Q
\leq G $, let $N_G(P,Q)$ denote the \emph{transporter}, that is, the
set of all $ g \in G $ such that $ gPg^{-1} \leq Q $. For any $ S
\in \Syl _p(G)$, $\Ll ^c_S(G)$ denotes the category whose objects
are the $p$-centric subgroups of $S$, and where $\Mor _{\Ll
^c_S(G)}(P,Q)= N_G(P,Q)/C_G'(P)$. By comparison, $\Hom _G(P,Q) \cong
N_G(P,Q)/C_G(P) $. Hence there is a functor from $\Ll ^c_S(G) $ to
$\Ff _S(G)$ which is the inclusion on objects, and which sends the
morphism corresponding to $ g \in N_G(P,Q) $ to $ c_g \in \Hom
_G(P,Q)$.

\begin{definition}
\label{def centric}
{\rm Let $\Ff$ be any fusion system over a $p$-group $S$. A subgroup
$ P \leq S $ is \emph{$\Ff$-centric} if $P$ and all of its
$\Ff$-conjugates contain their $S$-centralizers. We denote by $\Ff
^c $ the full subcategory of $\Ff $ whose objects are the $\Ff
$-centric subgroups of $S$. }
\end{definition}

\begin{definition}
\label{def radical}
{\rm Let $\Ff$ be any fusion system over a finite $p$-group $S$. A
subgroup $ P \leq S $ is \emph{$\Ff$-radical} if $\Out _{\Ff}(P) $
is $p$-reduced, that is, if the maximal normal $p$-subgroup of $\Out
_{\Ff}(P) $ is $\{ 1 \} $. }
\end{definition}

\begin{theorem}{\rm (Alperin's fusion theorem,
\cite[Theorem~A.10]{MR1992826})} \label{AlperinFusion}
Let $\Ff$ be a saturated fusion system over a finite $p$-group $S$.
Then for each $ \phi \in \Iso _{\Ff} (P,P')$, there exist sequences
of subgroups of $S$
\[ P = P_0, P_1, \ldots , P_k = P' \text{ and } Q_1, Q_2, \ldots Q_k \]
and elements $ \phi _i \in \aut _{\Ff} (Q_i) $, such that

\begin{itemize}
\item[(a)] $Q_i $ is fully normalized in $\Ff$, $\Ff$-radical, and
$\Ff$-centric for each $i$ ;
\item[(b)] $P_{i-1}$, $P_i \leq Q_i $ and $ \phi _i(P_{i-1}) = P_i $ for
each $i$ ; and
\item[(c)] $ \phi = \phi _k \circ \phi _{k-1} \circ \ldots \circ \phi _1
$.
\end{itemize}
\end{theorem}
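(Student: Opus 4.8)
The plan is to run a downward induction on $|P|$, proving the stronger statement that every isomorphism in $\Ff$ whose source has order at least $|P|$ admits a decomposition of the form in (a)--(c); call such a morphism \emph{decomposable}. For the base case $P=S$ the given $\phi$ lies in $\aut_\Ff(S)$, and $S$ is fully normalized and $\Ff$-centric for trivial reasons; it is also $\Ff$-radical, since axiom (I) forces $\aut_S(S)=\Inn(S)$ to be a Sylow $p$-subgroup of $\aut_\Ff(S)$, so $\Out_\Ff(S)$ has order prime to $p$ and hence trivial maximal normal $p$-subgroup. Three elementary facts will be used repeatedly: decomposable morphisms are closed under composition, inversion, and restriction to subgroups (restriction because (b) already allows $P_{i-1},P_i\leq Q_i$); and for a $p$-group one has $N_S(P)\supsetneq P$ whenever $P<S$.

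For the inductive step I would first establish an \textbf{extension lemma}: if $P''\leq S$ is fully normalized and $\rho\in\Iso_\Ff(P,P'')$, then $\rho$ extends to a morphism on $N_S(P)$. Indeed $P''$ is fully centralized by axiom (I) and $\aut_S(P'')\in\Syl_p(\aut_\Ff(P''))$, while $\rho\,\aut_S(P)\,\rho^{-1}$ is a $p$-subgroup of $\aut_\Ff(P'')$; composing $\rho$ with a suitable element of $\aut_\Ff(P'')$ we may assume $\rho\,\aut_S(P)\,\rho^{-1}\leq\aut_S(P'')$, so that $N_\rho=N_S(P)$ and axiom (II) supplies $\bar\rho\colon N_S(P)\to S$ extending $\rho$. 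Since $|N_S(P)|>|P|$ the inductive hypothesis makes $\bar\rho$, and hence its restriction $\rho$, decomposable. Choosing a fully normalized representative $P''$ of the common $\Ff$-conjugacy class of $P$ and $P'$ and applying the lemma at both ends to obtain decomposable isomorphisms onto $P''$, we reduce the theorem to proving that every $\phi\in\aut_\Ff(P)$ with $P$ fully normalized is decomposable.

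So assume $P$ is fully normalized and $\phi\in\aut_\Ff(P)$. If $P$ is both $\Ff$-centric and $\Ff$-radical, then $\phi=\phi_1$ with $Q_1=P$ works. Otherwise I exhibit a subgroup $P<R\leq N_\phi$; then axiom (II) (valid since $\phi(P)=P$ is fully centralized) extends $\phi$ over $N_\phi$, which has order $>|P|$, so the extension is decomposable by induction and so is $\phi$. If $P$ is not $\Ff$-centric, then since $P$ is fully centralized one gets $C_S(P)\not\leq P$; as $C_S(P)$ centralizes $P$ it lies in $N_\phi$, so $R=P\,C_S(P)$ works. If $P$ is $\Ff$-centric but not $\Ff$-radical, let $Q\leq\aut_\Ff(P)$ be the preimage of $O_p(\Out_\Ff(P))\neq 1$; it is a normal $p$-subgroup of $\aut_\Ff(P)$ properly containing $\Inn(P)$, hence contained in the Sylow subgroup $\aut_S(P)$, and its preimage $R$ in $N_S(P)$ properly contains $P$; normality of $Q$ gives $\phi c_g\phi^{-1}\in Q\leq\aut_S(P)$ for $g\in R$, so $R\leq N_\phi$. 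In each case the induction closes.

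The delicate point is the extension lemma together with the recurring verification that the relevant normalizer is strictly larger than $P$: this is precisely where the saturation axioms are indispensable, and where the argument would fail for a general fusion system. The only other thing to check is the formal claim that restricting a decomposition along $P\hookrightarrow N_\phi$ preserves (a)--(c), which is immediate from the inclusions permitted in (b).
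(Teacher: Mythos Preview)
The paper does not actually prove this theorem; it is quoted from \cite[Theorem~A.10]{MR1992826} and used as a black box. So there is no ``paper's own proof'' to compare against. Your argument is in fact essentially the standard proof (the one given in the cited reference), and it is correct apart from one point of phrasing.

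The imprecision is in the extension lemma. You write ``then $\rho$ extends to a morphism on $N_S(P)$'', but your own proof shows only that $\beta\rho$ extends for some $\beta\in\aut_\Ff(P'')$; the original $\rho$ need not have $N_\rho=N_S(P)$. This does not damage the argument, because what your reduction in the next sentence actually uses is merely the existence of \emph{some} decomposable isomorphism $P\to P''$ (and likewise $P'\to P''$), and the modified map $\beta\rho$ provides exactly that. You should state the lemma accordingly: for every $P$ in the $\Ff$-class of a fully normalized $P''$ there exists a decomposable $\Ff$-isomorphism $P\to P''$. With that wording the logic is clean and no circularity arises: you never need to know that the auxiliary automorphism $\beta$ is itself decomposable.

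Everything else checks out: the characterization ``$P$ fully centralized and not $\Ff$-centric $\Rightarrow C_S(P)\not\leq P$'' is correct (if $C_S(P)=Z(P)$ for a fully centralized $P$ then the inequality $|Z(P')|=|Z(P)|=|C_S(P)|\geq|C_S(P')|\geq|Z(P')|$ forces centricity for every conjugate $P'$); in the non-radical case the preimage $Q$ of $O_p(\Out_\Ff(P))$ is a normal $p$-subgroup of $\aut_\Ff(P)$ and hence lies in the Sylow $\aut_S(P)$, so its preimage in $N_S(P)$ is a genuine subgroup $R$ with $P\lneq R\leq N_\phi$; and the closure of decomposability under restriction follows exactly as you say from the form of~(b).
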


\begin{definition}
\label{def linkingsystem}
{\rm Let $\Ff $ be a fusion system over the $p$-group $S$. A
\emph{centric linking system} associated to $\Ff$ is a category
$\Ll$ whose objects are the $\Ff $-centric subgroups of $S$,
together with a functor
\[
\pi : \Ll \rightarrow \Ff ^c
\]
and ``distinguished" monomorphisms $ P \stackrel{\delta _P}{\to}
\aut _{\Ll}(P)$ for each $\Ff$-centric subgroup $ P \leq S $, which
satisfy the following conditions:

\begin{itemize}
\item[(A)] $\pi$ is the identity on objects and surjective on morphisms. More precisely, for each pair of objects $ P,Q \in \Ll $,
$Z(P)$ acts freely on $\Mor _{\Ll}(P,Q)$ by composition (upon
identifying $Z(P)$ with $\delta _P(Z(P)) \leq \aut _{\Ll}(P)$), and
$\pi$ induces a bijection
\[ \Mor _{\Ll}(P,Q)/Z(P) \stackrel{\cong}{\rightarrow} \Hom _{\Ff}(P,Q). \]
\item[(B)] For each $\Ff$-centric subgroup $ P \leq S $ and each $ g \in P $, $\pi $ sends $\delta _P(g) \in \aut _{\Ll}(P)$ to
$c_g \in \aut _{\Ff}(P) $.
\item[(C)] For each $ f \in \Mor _{\Ll}(P,Q) $ and each $g \in P$, the following square commutes in $\Ll$:
\[
\diagram P \rto^{f} \dto_{\delta _P(g)} & Q \dto^{\delta _Q(\pi
(f)(g))} \cr P \rto^f & Q.
\enddiagram
\]
\end{itemize}
}
\end{definition}

One easily checks that for any finite group $G$ and any $S \in \Syl
_p(G)$, $\Ll ^c_S(G)$ is a centric linking system associated to the
fusion system $\Ff _S(G)$. We are now ready to give the definition
of a $p$-local finite group, as in \cite[Definition~1.8]{MR1992826}.

\begin{definition}
{\rm A \emph{$p$-local finite group} is a triple $(S,\Ff,\Ll)$,
where $\Ff$ is a saturated fusion system over the $p$-group $S$ and
$\Ll$ is a centric linking system associated to $\Ff$. The
classifying space of the $p$-local finite group $(S,\Ff,\Ll)$ is the
space $|\Ll|^{\wedge}_p$. }
\end{definition}

Thus, for any finite group $G$ and any $ S \in \Syl _p(G)$, the
triple $(S,\Ff _S(G),\Ll ^c_S(G))$ is a $p$-local finite group. Its
classifying space $|\Ll ^c_S(P)|^{\wedge}_p $ is homotopy
equivalent to $BG^{\wedge}_p$ by \cite{MR1961340}. In general, the fact that $S$ itself
is an object of the centric linking system yields a ``canonical
inclusion" $Bi: BS \rightarrow |\Ll|$ of the Sylow subgroup $S$ into
the $p$-local finite group, which allows us to compare their mod $p$
cohomologies. We will denote by $ Bi \pcom $ the composition of $Bi$
with the $p$-completion map $ |\Ll| \rightarrow |\Ll| \pcom $.

\section{Nilpotent fusion systems and $p$-local finite groups}
In this section we start from the definition of nilpotency in terms of fusion data
and obtain different global characterizations, at the level of linking systems
and their mod $p$ cohomology. We start by giving an elementary proof of the fact that the fusion
system of a finite $p$-group has a unique associated centric linking system. This also follows
from \cite[Proposition~4.2]{MR2167090}, since a nilpotent
fusion system is constrained, or from Oliver's solution to the Martino-Priddy
conjecture in \cite{MR2203209} and \cite{MR2092063},
where it is shown that there is a unique centric linking system
associated to the fusion system of a finite group.

\begin{proposition}
\label{UniqueLinking}
If $S$ is a finite $p$-group, the fusion system $\Ff_S(S)$ has a unique
associated centric linking system up to isomorphism of categories.
\end{proposition}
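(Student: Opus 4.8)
First I would identify the key structural simplification: since $S$ is a finite $p$-group, the only $\Ff_S(S)$-centric subgroup worth worrying about at the top is $S$ itself, and more importantly, for every subgroup $P \leq S$ we have $\Hom_{\Ff_S(S)}(P,Q) = \Hom_S(P,Q)$, so all fusion is realized by honest conjugation inside $S$. In particular, for an $\Ff_S(S)$-centric subgroup $P$, the condition $C_S(P) \leq P$ forces $C_S(P) = Z(P)$, and the morphism sets in $\Ff_S(S)^c$ are just $N_S(P,Q)/C_S(P) = N_S(P,Q)/Z(P)$. The claim is then that the only centric linking system over this is the ``tautological'' one $\Ll_S(S)$ with $\Mor_{\Ll_S(S)}(P,Q) = N_S(P,Q)/Z(P)$ — wait, one has to be careful here, because condition (A) says $Z(P)$ acts freely on $\Mor_\Ll(P,Q)$ with quotient $\Hom_{\Ff}(P,Q) = N_S(P,Q)/Z(P)$, so a priori $|\Mor_\Ll(P,Q)| = |N_S(P,Q)|$. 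So the candidate model is $\Mor_{\Ll_S(S)}(P,Q) = N_S(P,Q)$, with $\pi$ the quotient by $Z(P)$ and $\delta_P\colon P \hookrightarrow N_S(P) = \aut_{\Ll_S(S)}(P)$ the inclusion. I would first check this is indeed a centric linking system (conditions (A), (B), (C) are immediate from the transporter description), giving existence.

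For uniqueness, let $(\Ll,\pi,\{\delta_P\})$ be an arbitrary centric linking system associated to $\Ff_S(S)$. The plan is to build an isomorphism of categories $\Phi\colon \Ll \to \Ll_S(S)$ which is the identity on objects. The idea is to use the canonical inclusion: $S$ is itself $\Ff_S(S)$-centric, and $\aut_\Ll(S)$ fits in an exact sequence $1 \to Z(S) \to \aut_\Ll(S) \to \aut_{\Ff_S(S)}(S) = \Inn(S) \to 1$ (using condition (A) for $P = Q = S$, together with the fact that $\aut_S(S) = \Inn(S)$ is the whole automorphism group in $\Ff_S(S)$). Condition (B) says $\delta_S\colon S \to \aut_\Ll(S)$ is a section over $\Inn(S) = S/Z(S)$ in the sense that $\pi\delta_S$ is the projection, and since $\delta_S$ is a monomorphism with $\delta_S(Z(S)) = Z(S) \cap \ker\pi$ of the right order, $\delta_S$ is in fact an \emph{isomorphism} $\aut_\Ll(S) \cong S$. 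This pins down the automorphism group of the distinguished object.

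Next I would propagate this rigidity downward. For an arbitrary $\Ff_S(S)$-centric $P \leq S$, pick the inclusion morphism: more precisely, by condition (A) one can lift the inclusion $P \hookrightarrow S$ to some $\iota_P \in \Mor_\Ll(P,S)$, and then every morphism $f \in \Mor_\Ll(P,Q)$ is determined, via the category structure and composition with such lifted inclusions, by an element of $\aut_\Ll(S) \cong S$; condition (C) forces the compatibility with $\delta$. Concretely I expect to show $\Mor_\Ll(P,Q) \cong \{g \in S : gPg^{-1} \leq Q\} = N_S(P,Q)$ naturally, by transporting along $\iota_Q^{-1}\circ f \circ \iota_P \in \aut_\Ll(S)$ — this is the standard ``all linking-system morphisms between centrics are restrictions of automorphisms of a big enough subgroup'' argument, here made trivial because $S$ is its own normalizer at the top. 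Assembling these bijections into a functor and checking it respects composition and the $\delta_P$'s gives the isomorphism $\Ll \cong \Ll_S(S)$.

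The main obstacle will be the bookkeeping in the last step: one must make the choices of lifts $\iota_P$ coherently (or show that different choices yield isomorphic, indeed equal-up-to-inner, functors), and verify that the resulting assignment on morphisms is well-defined independently of the chosen lift of the inclusion and genuinely functorial, i.e. compatible with composition $\Mor_\Ll(P,Q) \times \Mor_\Ll(Q,R) \to \Mor_\Ll(P,R)$. Conditions (A) (freeness of the $Z(P)$-action and bijectivity of $\pi$) and (C) (the $\delta$-compatibility square) are exactly what is needed to rule out any ambiguity, so the verification is routine but slightly tedious; I would organize it by first treating $\aut_\Ll(P)$ for all $P$, then general $\Mor_\Ll(P,Q)$ via the factorization of morphisms in $\Ff_S(S)$ through isomorphisms followed by inclusions (Definition~\ref{def fusion}(b)).
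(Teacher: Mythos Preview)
Your strategy is sound and would yield a correct proof, but it is considerably more laborious than what the paper does. The paper observes that \cite[Proposition~1.11]{MR1992826} already supplies, for \emph{any} centric linking system $\Ll$ over any fusion system, a functor $\delta\colon \Ll_S(S)\to\Ll$ which is the identity on objects and injective on each morphism set $\delta_{P,Q}\colon N_S(P,Q)\to\Mor_\Ll(P,Q)$. With that in hand, uniqueness is a two-line counting argument: both sides have free $Z(P)$-actions with the same quotient $\Hom_S(P,Q)$, so the injective map $\delta_{P,Q}$ is a bijection. What you are doing---fixing $\delta_S\colon S\xrightarrow{\cong}\aut_\Ll(S)$, choosing lifts $\iota_P$ of the inclusions, and propagating downward---is exactly how that proposition is proved in \cite{MR1992826}; you are reproving it in this special case rather than citing it.

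One technical wrinkle: your expression ``$\iota_Q^{-1}\circ f\circ\iota_P\in\aut_\Ll(S)$'' is ill-typed, since $\iota_Q\colon Q\to S$ is not invertible in $\Ll$ when $Q\lneq S$. The correct maneuver is the one carried out in \cite[Proposition~1.11]{MR1992826}: first build compatible distinguished inclusions $\iota_P^Q$ for all centric $P\leq Q$ (using that morphisms in $\Ll$ are categorical monomorphisms and epimorphisms), and then define $\delta_{P,Q}(g)$ as the unique morphism making the evident square with $\iota_P^S$, $\iota_Q^S$, and $\delta_S(g)$ commute. So your plan works, but the ``slightly tedious'' bookkeeping you anticipate is precisely the content of the cited proposition; invoking it directly, as the paper does, gives the same conclusion in a fraction of the space.
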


\begin{proof}
Note that $\Ll_S(S)$ is a centric linking system associated to $\Ff_S(S)$. Let
$\Ll$ be another centric linking system associated to $\Ff_S(S)$. The categories
$\Ll_S(S)$ and $\Ll$ have the same objects, the $\Ff_S(S)$-centric subgroups of $S$.
Now consider the functor $ \delta: \Ll_S(S) \to \Ll $ constructed in \cite[Proposition~1.11]{MR1992826}.
Recall that $ \Mor_{\Ll_S(S)}(P,Q) = N_S(P,Q)$ for $\Ff_S(S)$-centric $ P $, $ Q \leq S $.
On morphisms, the maps $ \delta _{P,Q} : N_S(P,Q) \to \Mor_{\Ll}(P,Q) $ are injective
and $ \delta _{P,P}(g) = \delta _P (g)$ for $g \in P$. Therefore for all $\Ff_S(S)$-centric
$ P$, $Q \leq S $, there are commutative diagrams:
\[
\diagram
N_S(P,Q)/Z(P) \rto \dto & \Mor_{\Ll}(P,Q)/Z(P) \dto \cr
\Hom _S(P,Q) \rto & \Hom _{\Ff_S(S)}(P,Q), \cr
\enddiagram
\]
where the two vertical maps are isomorphisms by the definition of centric linking system and
the lower horizontal map is the identity. Hence the upper horizontal map must be an isomorphism
as well. Since the action of $Z(P)$ is free and compatible with the maps $ \delta_{P,Q} $, this
implies that $ \delta_{P,Q}$ is an isomorphism and therefore the categories $\Ll$ and $\Ll_S(S)$
are isomorphic.
\end{proof}

Following Kessar and Linckelmann's definition for fusion systems in
\cite{MR2379788} and the characterization of $p$-nilpotence for
honest groups in terms of fusion stated in the introduction, we define a
$p$-local finite group to be nilpotent when its fusion system is that
of a $p$-group.

\begin{definition}
\label{def nilpotent}
{\rm A $p$-local finite group $(S, \Ff, \Ll)$ is \emph{nilpotent} if
$\Ff = \Ff _S(S)$.}
\end{definition}

Even though this definition seems to be stated at the level of fusion systems only,
it says that the $p$-local finite group is an honest $p$-group. As it is sometimes more
convenient to check a topological condition (using the centric linking system)
than to stay at the purely algebraic level of the fusion data, we propose the following
characterization.

\begin{proposition}
\label{prop Martino-Priddy}
A $p$-local finite group $(S, \Ff, \Ll)$ is nilpotent if and only if $|\Ll| \pcom \simeq BS$.
\end{proposition}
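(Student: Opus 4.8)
The statement is an if-and-only-if, and one direction is essentially immediate from the previous results in this section. The plan is to treat the two implications separately.

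For the forward direction, suppose $(S,\Ff,\Ll)$ is nilpotent, so by Definition~\ref{def nilpotent} we have $\Ff = \Ff_S(S)$. Then $\Ll$ is a centric linking system associated to $\Ff_S(S)$, and by Proposition~\ref{UniqueLinking} it is isomorphic to $\Ll_S(S)$; hence $|\Ll| \cong |\Ll_S(S)|$. So it suffices to identify $|\Ll_S(S)|$ up to $p$-completion with $BS$. The category $\Ll_S(S)$ has the $\Ff_S(S)$-centric subgroups of $S$ as objects and $\Mor_{\Ll_S(S)}(P,Q) = N_S(P,Q)$; I would note that $S$ itself is an object (it is $\Ff_S(S)$-centric since $C_S(S) = Z(S) \leq S$) and is a terminal-type object in a strong enough sense that the inclusion $BS = B\,\aut_{\Ll_S(S)}(S) \to |\Ll_S(S)|$ is a homotopy equivalence. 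Concretely, the canonical inclusion $Bi\colon BS \to |\Ll_S(S)|$ realizes $|\Ll_S(S)|$, before $p$-completion, as (homotopy equivalent to) $BS$; after $p$-completion we get $|\Ll|\pcom \simeq BS\pcom \simeq BS$, the last equivalence because $S$ is already a finite $p$-group, so $BS$ is $p$-complete (up to the usual mild hypotheses, $BS$ is $\F_p$-good and $p$-complete).

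For the converse, suppose $|\Ll|\pcom \simeq BS$. The idea is to compare the orders of the relevant finite groups via a counting invariant extracted from the space $|\Ll|\pcom$. One route: by the theory of \cite{MR1992826}, for a $p$-local finite group the Euler characteristic (or, better, the ``number of conjugacy classes of elements'' type invariant) of $|\Ll|\pcom$ is determined by the fusion data; if $|\Ll|\pcom \simeq BS$ then this invariant agrees with that of the honest $p$-group $S$, which forces $\Ff$ to have no more isomorphisms between subgroups than $\Ff_S(S)$ does, i.e. $\Ff = \Ff_S(S)$. A cleaner route, which I expect the authors to prefer: the canonical inclusion $Bi\pcom\colon BS \to |\Ll|\pcom$ induces the restriction map $H^*(|\Ll|\pcom;\F_p) \to H^*(BS;\F_p)$, and by \cite{MR1992826} the image is the ring of ``stable elements'' $H^*(\Ff)$. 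If $|\Ll|\pcom \simeq BS$ then a dimension/rank count of $H^*(-;\F_p)$ in each degree, together with the fact that $Bi\pcom$ is a map between spaces with abstractly isomorphic cohomology, shows $H^*(\Ff) = H^*(S;\F_p)$; then the characterization of $p$-nilpotency recalled in the introduction (the restriction map $H^*(G;\F_p)\to H^*(S;\F_p)$ is iso iff $G$ is $p$-nilpotent, in its fusion-system incarnation) gives $\Ff = \Ff_S(S)$. Either way, one must be slightly careful that $Bi\pcom$ being a cohomology isomorphism is genuinely forced, not merely that the two cohomologies are abstractly isomorphic.

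The main obstacle is precisely this last point in the converse: knowing $|\Ll|\pcom$ is \emph{homotopy equivalent} to $BS$ is a priori weaker than knowing the \emph{canonical} map $Bi\pcom$ is an equivalence, and the fusion-theoretic conclusion is naturally phrased in terms of $Bi\pcom$. I would resolve this by appealing to the rigidity of $BS$ for $S$ a $p$-group: any map $BS \to BS$ that is a mod-$p$ cohomology isomorphism (equivalently a homotopy equivalence) is induced by an automorphism of $S$, so precomposing a hypothetical equivalence $|\Ll|\pcom \simeq BS$ with $Bi\pcom$ and inverting shows $Bi\pcom$ itself is a mod-$p$ cohomology isomorphism, whence $H^*(\Ff) = H^*(S;\F_p)$ and we conclude as above. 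The forward direction has no real obstacle beyond correctly invoking Proposition~\ref{UniqueLinking} and the identification $|\Ll_S(S)| \simeq BS$.
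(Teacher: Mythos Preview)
Your forward direction and your identification of the key obstacle in the converse are both on target, and they match the paper. In fact your dimension count (``Route~2'') already resolves the obstacle cleanly without any rigidity add-on: $Bi^*$ identifies $H^*(|\Ll|;\F_p)$ with the subring of stable elements in $H^*(BS;\F_p)$ and is therefore injective; if $|\Ll|\pcom \simeq BS$ abstractly then source and target have equal finite $\F_p$-dimension in every degree, so $Bi^*$ is an isomorphism and $Bi\pcom$ itself is an equivalence. The paper reaches the same intermediate point via \cite[Theorem~4.4(a)]{MR1992826}, writing any equivalence $f\colon BS \to |\Ll|\pcom$ as $Bi\pcom \circ B\rho$ for some $\rho\in\Hom(S,S)$. (Your separate ``rigidity'' paragraph states the implication in the wrong direction---that self-maps which \emph{are} cohomology isomorphisms come from automorphisms---and so does not by itself show that the composite $BS\to BS$ is an equivalence; but the dimension count makes this detour unnecessary.)

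The genuine gap is the final step. From $H^*(\Ff) = H^*(S;\F_p)$ you invoke ``the fusion-system incarnation'' of the cohomological $p$-nilpotency criterion to deduce $\Ff = \Ff_S(S)$. In this paper that criterion is precisely Proposition~\ref{prop cohomology}, proved immediately \emph{after} the present proposition and \emph{using} it; so within the paper's logic your argument is circular, and you have not supplied an independent route to that implication (the honest-group theorem cited in the introduction does not apply to an abstract saturated fusion system). The paper closes the argument differently: once $|\Ll|\pcom \simeq |\Ll_S(S)|\pcom$, it invokes \cite[Theorem~7.4]{MR1992826}, which upgrades a homotopy equivalence of classifying spaces to an isomorphism of $p$-local finite groups compatible with the projections to $\Ff^c$ and the maps $\delta_P$; choosing the identity on $S$ (which the relevant commuting square permits) then yields literally $\Ff = \Ff_S(S)$. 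If you want to keep a cohomological line, you must replace the appeal to the unproved criterion by this rigidity theorem, or by an equivalent direct argument passing from the equivalence $Bi\pcom$ to equality of fusion systems.
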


\begin{proof}
If $ \Ff = \Ff_S(S)$, then $\Ll$ is isomorphic to $\Ll_S(S)$ by Proposition \ref{UniqueLinking}
and in particular $ |\Ll| \pcom \simeq |\Ll_S(S)| \pcom \simeq BS $.

On the other hand, if $ |\Ll| \pcom \simeq BS $, let $ f : BS \to | \Ll | \pcom $ be a homotopy
equivalence. By \cite[Theorem~4.4(a)]{MR1992826}, $ f $ is homotopic to $ Bi \pcom \circ B\rho$
for some $ \rho \in \Hom(S,S)$. Therefore, $ Bi \pcom \simeq f \circ B(\rho^{-1}) $ is also a homotopy equivalence.

Now \cite[Theorem~7.4]{MR1992826} says that if $ |\Ll| \pcom \simeq |\Ll_S(S)| \pcom $, then the $p$-local
finite groups $(S,\Ff,\Ll)$ and $(S,\Ff_S(S),\Ll_S(S))$ are isomorphic, in the sense that there are isomorphisms
of groups and categories $ \alpha: S \to S $, $  \alpha_{\Ff} : \Ff_S(S) \to \Ff $ and $  \alpha_{\Ll} : \Ll_S(S) \to \Ll $
such that $ \alpha _{\Ff} (P) = \alpha _{\Ll}(P) = \alpha (P) $ and such that they commute with the projections $ \Ll \to \Ff ^c $
and the structure maps $ \delta_Q : Q \to \aut_{\Ll}(Q) $ for all $ Q \leq S$. The proof of the theorem shows that we can
choose $\alpha : S \to S $ to be any isomorphism that makes the following diagram commute up to homotopy:
\[
\diagram
BS \rto^{id} \dto_{B\alpha} & BS \dto^{Bi \pcom} \cr
BS \rto^{Bi \pcom} & | \Ll | \pcom.
\enddiagram
\]

We can choose the identity map and in particular we have an isomorphism of categories $ \alpha_{\Ff} : \Ff_S(S) \to \Ff $ such that $ \alpha_{\Ff}(P) = P $
for all $ P \leq S $. Therefore $ \Hom_{\Ff}(P,Q) = \Hom_{\Ff_S(S)}(P,Q)$ so $\Ff = \Ff_S(S)$.
\end{proof}

Our first cohomological criterion is an elementary reformulation of the definition.

\begin{proposition}
\label{prop cohomology}
A $p$-local finite group $(S, \Ff, \Ll)$ is nilpotent if and only if
the canonical inclusion induces an isomorphism $H^*(|\Ll|, \F_p) \to
H^*(BS, \F_p)$.
\end{proposition}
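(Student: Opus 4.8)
The plan is to prove both implications, using Proposition~\ref{prop Martino-Priddy} to reduce to a statement about the classifying space. The forward implication is essentially formal: if $(S,\Ff,\Ll)$ is nilpotent, then $|\Ll|\pcom\simeq BS$ by Proposition~\ref{prop Martino-Priddy}, and moreover the proof of that proposition shows the equivalence can be taken to be $Bi\pcom$. Since $H^*(BS,\F_p)$ is already $p$-complete (it is a finite type $\F_p$-vector space in each degree), the canonical inclusion $Bi\colon BS\to|\Ll|$ induces in mod $p$ cohomology the same map as $Bi\pcom\colon BS\to|\Ll|\pcom$, which is then an isomorphism.

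For the converse, suppose $Bi\colon BS\to|\Ll|$ induces an isomorphism $H^*(|\Ll|,\F_p)\xrightarrow{\cong}H^*(BS,\F_p)$. First I would note that $|\Ll|$ is $\F_p$-good, so that the $p$-completion map $|\Ll|\to|\Ll|\pcom$ is a mod $p$ cohomology isomorphism; this is part of the foundational theory in \cite{MR1992826} (the classifying space of a $p$-local finite group has the mod $p$ cohomology of the fusion-invariant subring of $H^*(BS,\F_p)$, and in particular is $\F_p$-complete). Hence $Bi\pcom\colon BS\to|\Ll|\pcom$ also induces an isomorphism on mod $p$ cohomology. Now both $BS$ and $|\Ll|\pcom$ are $p$-complete spaces with finite fundamental group (here $|\Ll|\pcom$ has fundamental group a finite $p$-group by \cite[Theorem~4.4(a)]{MR1992826} and the discussion of $\pi_1$ there, or one uses that it is nilpotent and $p$-complete), so a mod $p$ cohomology isomorphism between them is a homotopy equivalence by the mod $p$ version of the Whitehead theorem for $p$-complete nilpotent spaces. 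Thus $|\Ll|\pcom\simeq BS$, and Proposition~\ref{prop Martino-Priddy} gives that $(S,\Ff,\Ll)$ is nilpotent.

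The main obstacle is the converse direction, and specifically verifying that a mod $p$ cohomology isomorphism $Bi\pcom$ between $BS$ and $|\Ll|\pcom$ is actually a homotopy equivalence rather than merely a cohomology isomorphism. This requires knowing that $|\Ll|\pcom$ is a $p$-complete space for which cohomology detects weak equivalences from $BS$ — i.e.\ that it is nilpotent (or at least that $\pi_1$ is a finite $p$-group acting nilpotently on higher homotopy), which follows from the structural results of \cite{MR1992826} on classifying spaces of $p$-local finite groups together with $\pi_1(|\Ll|\pcom)$ being a quotient of $S$. Once that is in place, the standard $\F_p$-Whitehead theorem applies. An alternative, perhaps cleaner, route is to invoke \cite[Theorem~4.4(a)]{MR1992826} directly: since $H^*(Bi\pcom)$ is an isomorphism, $Bi\pcom$ is a mod $p$ cohomology equivalence between $p$-complete spaces, hence a homotopy equivalence, after which one proceeds exactly as in the second paragraph of the proof of Proposition~\ref{prop Martino-Priddy} to conclude $\Ff=\Ff_S(S)$. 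I would present the argument this second way to keep it short and self-contained within the results already cited.
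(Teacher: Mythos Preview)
Your proposal is correct and follows essentially the same strategy as the paper: use Proposition~\ref{prop Martino-Priddy} for the forward direction, and for the converse deduce that $Bi\pcom$ is an equivalence of $p$-complete spaces. The one substantive difference is in how the converse is justified. You invoke a mod~$p$ Whitehead theorem for $p$-complete nilpotent spaces and therefore worry about whether $|\Ll|\pcom$ is nilpotent. The paper avoids this entirely: it first observes (via \cite[Proposition~5.2 and Theorem~5.8]{MR1992826}) that $H^*(|\Ll|;\F_p)$ is Noetherian, hence of finite type, so the cohomology isomorphism dualizes to a mod~$p$ homology isomorphism; then the very definition of Bousfield--Kan $p$-completion gives that $Bi$ becomes an equivalence after $(-)\pcom$, with no nilpotency hypothesis needed. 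Your route works too, but the finite-type/homology argument is both shorter and sidesteps the structural question about $\pi_1(|\Ll|\pcom)$ acting nilpotently that you flag as the ``main obstacle''.
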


\begin{proof}
By Proposition \ref{prop Martino-Priddy}, if $(S,\Ff,\Ll)$ is nilpotent, then
$ Bi \pcom : BS \to| \Ll | \pcom $ is a homotopy equivalence and therefore
the induced map $ H_*(BS;\F_p) \to H_*(|\Ll|, \F_p) $ is an isomorphism.
By \cite[Theorem~2.48]{MR2378355}, the induced map in mod $p$ cohomology
is also an isomorphism.

By \cite[Proposition~5.2]{MR1992826} and \cite[Theorem~5.8]{MR1992826}, the ring
$H^*(|\Ll|, \F_p) $ is Noetherian, and in particular the groups $H^n(|\Ll|, \F_p)
\cong H_n(|\Ll|, \F_p)$ are finitely generated $\F_p$-modules. Again by \cite[Theorem~2.48]{MR2378355},
an isomorphism in mod $p$ cohomology implies an isomorphism in mod $p$ homology,
which is equivalent to a homotopy equivalence between their $p$-completions.
\end{proof}

\begin{proposition}
\label{retract}
Let $(S,\Ff,\Ll)$ be a $p$-local finite group and let $Bi: BS
\rightarrow |\Ll|$ be the standard inclusion. Then $(S,\Ff,\Ll)$
is nilpotent if and only if the map $Bi^\wedge_p$ has a retraction
$r: |\Ll|^\wedge_p \rightarrow BS$.
\end{proposition}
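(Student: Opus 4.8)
The plan is to prove the two implications separately, using the already-established characterizations of nilpotency. For the forward direction, suppose $(S,\Ff,\Ll)$ is nilpotent. By Proposition~\ref{prop Martino-Priddy} the map $Bi\pcom\colon BS\to|\Ll|\pcom$ is a homotopy equivalence, so it certainly admits a homotopy inverse; composing that inverse with the $p$-completion map $|\Ll|\to|\Ll|\pcom$ gives a retraction $r\colon|\Ll|\pcom\to BS$ of $Bi\pcom$, since $BS$ is already $p$-complete ($S$ being a finite $p$-group). Actually one must be slightly careful: a retraction of $Bi\pcom$ means a map $r$ with $r\circ Bi\pcom\simeq\id_{BS}$, and this is immediate once $Bi\pcom$ is an equivalence. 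So the forward direction is essentially free.

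The reverse direction is the substantive one. Assume $r\colon|\Ll|\pcom\to BS$ is a retraction of $Bi\pcom$, i.e.\ $r\circ Bi\pcom\simeq\id_{BS}$. Passing to mod $p$ cohomology, $(Bi\pcom)^*\circ r^*=\id$ on $H^*(BS;\F_p)$, so $(Bi\pcom)^*\colon H^*(|\Ll|\pcom;\F_p)\to H^*(BS;\F_p)$ is surjective. The first key input is that $(Bi\pcom)^*$ identifies $H^*(|\Ll|;\F_p)\cong H^*(|\Ll|\pcom;\F_p)$ with the subring of \emph{stable elements} in $H^*(BS;\F_p)$ with respect to $\Ff$; this is \cite[Theorem~B]{MR1992826} (combined with the fact that $p$-completion is a mod~$p$ cohomology equivalence, as used in Proposition~\ref{prop cohomology}). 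A stable element is in particular $\aut_\Ff(S)$-invariant, but more importantly the image of $(Bi\pcom)^*$ is always contained in the stable elements, so surjectivity of $(Bi\pcom)^*$ forces every element of $H^*(BS;\F_p)$ to be $\Ff$-stable. In other words, the restriction map $H^*(|\Ll|;\F_p)\to H^*(BS;\F_p)$ is an isomorphism. By Proposition~\ref{prop cohomology} this is exactly the condition for $(S,\Ff,\Ll)$ to be nilpotent, and we are done.

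The main obstacle I anticipate is making the last step airtight: one needs that the image of the restriction (= stable elements) being \emph{all} of $H^*(BS;\F_p)$ really does imply $\Ff=\Ff_S(S)$, rather than merely that the retraction exists formally. There are two clean ways to close this. The first, as above, is to simply invoke Proposition~\ref{prop cohomology}, since we will have shown $H^*(|\Ll|;\F_p)\to H^*(BS;\F_p)$ is an isomorphism (injectivity is automatic because the restriction to the Sylow subgroup of the stable elements is by definition injective). The second, if one prefers not to route through stable elements, is to observe that a retraction makes $BS$ a retract of $|\Ll|\pcom$ \emph{up to homotopy}, hence an isomorphism on all homotopy and homology groups would follow once one knows $|\Ll|\pcom$ and $BS$ have abstractly isomorphic mod~$p$ cohomology of the same finite type (using the Noetherian/finiteness statement from \cite[Theorem~5.8]{MR1992826} as in Proposition~\ref{prop cohomology}); a surjection between finitely generated graded $\F_p$-vector spaces of the same dimensions in each degree is an isomorphism, so $r^*$ and $(Bi\pcom)^*$ are mutually inverse, $Bi\pcom$ is a mod~$p$ homology equivalence between $p$-complete spaces, hence a homotopy equivalence, and Proposition~\ref{prop Martino-Priddy} applies. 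I would present the first route as the main argument since it is shortest, and the key technical citation is \cite[Theorem~B]{MR1992826} identifying $H^*(|\Ll|;\F_p)$ with the $\Ff$-stable elements of $H^*(BS;\F_p)$.
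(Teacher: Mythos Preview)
Your proof is correct and follows essentially the same approach as the paper. Both arguments handle the substantive (reverse) direction by combining the retraction, which forces $(Bi\pcom)^*$ to be surjective on mod~$p$ cohomology, with the stable elements theorem \cite[Theorem~5.8]{MR1992826}, which makes $(Bi\pcom)^*$ injective, and then conclude nilpotency from the resulting cohomology isomorphism; the only cosmetic differences are that for the easy direction the paper cites Proposition~\ref{UniqueLinking} directly rather than routing through Proposition~\ref{prop Martino-Priddy}, and in the last step the paper concludes that $Bi\pcom$ is an equivalence directly rather than invoking Proposition~\ref{prop cohomology}.
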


\begin{proof}
The existence of the retraction implies that the cohomology of the
Sylow subgroup is a retract of the cohomology of $|\Ll|^\wedge_p$.
But the mod $p$ cohomology of $|\Ll|$ can be computed by stable
elements by \cite[Theorem~5.8]{MR1992826}, and moreover it is a subalgebra of $H^*(S; \F_p)$. Therefore $BS \rightarrow
|\Ll|^\wedge_p$ must be an equivalence.

On the other hand, if $ \Ff = \Ff_S(S) $, then there is a unique
centric linking system associated to $\Ff$ up to isomorphism by Proposition
\ref{UniqueLinking}. Given an isomorphism of categories $ \Ll \to \Ll_S(S)$ compatible with the
projection maps to $ \Ff = \Ff_S(S)$ and the structure maps $ \delta_P$,
the induced map on classifying spaces gives such a retraction.
\end{proof}

\begin{remark}
\label{stableretract}
{\rm The result \cite[Proposition~5.5]{MR1992826} on which the proof
of the stable element formula relies is stronger than the use we
make of it in the previous proof. It actually tells us that the
suspension spectrum of $|\Ll|^\wedge_p$ appears as a stable summand
in the suspension spectrum $\Sigma^\infty BS$. In particular
$Bi^\wedge_p$ induces always an epimorphism in homology and a
monomorphism in cohomology with arbitrary coefficients.}
\end{remark}

\section{Cohomological characterizations in low degrees}
\label{sectionlow}

We show in this section that a nilpotent $p$-local finite group can
be recognized by looking at (co)homological information in low
degree. For finite groups the criteria we obtain are those of Tate
\cite{MR0160822}, Stammbach \cite{MR0473039}, and some variations.
Proposition~\ref{retract} is useful in view of the following result,
which establishes a ``semi-localization property" for certain maps
$BS \rightarrow X$.

\begin{proposition}
\label{weaklyinitial}
Let $S$ be a $p$-group and $f: BS \rightarrow X$ be a map inducing
an epimorphism $H^1(X; \F_p) \twoheadrightarrow H^1(BS; \F_p)$ and
a monomorphism $H^2(X; \F_p) \hookrightarrow H^2(BS; \F_p)$. Then
any map $BS \rightarrow BP$ to the classifying space of a
$p$-group $P$ factors through $f$ up to homotopy. In particular
$BS$ is a retract of $X$.
\end{proposition}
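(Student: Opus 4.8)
The plan is to exploit the fact that classifying spaces of $p$-groups are $\F_p$-local (more precisely, $\F_p$-complete and built as towers of principal fibrations with Eilenberg--MacLane fibers of type $K(\pi,n)$ with $\pi$ an elementary abelian $p$-group) together with an obstruction-theory argument in the spirit of the mapping-space computations of Broto--Levi--Oliver. First I would reduce to the case of maps $BS \to K(\Z/p, n)$: since $P$ is a finite $p$-group, $BP$ is $\F_p$-complete and can be obtained as the inverse limit of a Postnikov-type tower whose successive stages are obtained by pulling back path-loop fibrations over products of $K(\Z/p, n)$'s; if every map $BS \to K(\Z/p, n)$ factors through $f$ up to homotopy, then, provided the relevant obstruction and lifting groups vanish, one can inductively lift a map $BS \to BP$ along the tower to a map $X \to BP$. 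Concretely, for the final ``in particular'' clause, applying the factorization statement to the identity $BS \to BS$ (taking $P = S$) produces $r : X \to BS$ with $r \circ f \simeq \id_{BS}$, exhibiting $BS$ as a retract of $X$.

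The key steps, in order, are: (1) identify $[BS, K(\Z/p,n)] = H^n(BS;\F_p)$ and $[X, K(\Z/p,n)] = H^n(X;\F_p)$, so that ``factoring a map $BS \to K(\Z/p,n)$ through $f$'' means exactly that the class in $H^n(BS;\F_p)$ is in the image of $f^* : H^n(X;\F_p) \to H^n(BS;\F_p)$; (2) observe that the hypothesis gives surjectivity of $f^*$ in degrees $1$ and $2$; (3) promote this to surjectivity of $f^*$ in all degrees. For step (3) I would argue that $H^*(BS;\F_p)$ is generated as an algebra over the Steenrod algebra by $H^1$ together with Bocksteins, or — cleaner — use the Eilenberg--Moore / Lannes-type machinery: the mod $p$ cohomology of any space receiving a map from $BS$ that is iso in low degrees and mono in degree $2$ behaves, up to $F$-isomorphism, like an unstable algebra, and the condition ``epi in $H^1$, mono in $H^2$'' is precisely what forces the induced map on the full cohomology ring to be surjective (compare the way such hypotheses are used in homotopical localization results for $BS$, e.g.\ in the Dwyer--Zabrodsky and Broto--Levi--Oliver circle of ideas). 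Then (4) run the obstruction theory: to lift $BS \to BP$ to $X \to BP$ along the $K(\Z/p, n)$-tower, the obstruction to extending over each stage lives in $H^{n+1}(X, BS;\F_p)$ (relative cohomology of the mapping cone of $f$) and the indeterminacy in $H^n(X, BS; \F_p)$; the long exact sequence of the pair, combined with surjectivity of $f^*$ in all degrees, kills both, so the lift exists.

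The main obstacle I anticipate is step (3)–(4): turning the purely low-degree hypothesis into a statement about all degrees, and then handling the non-simply-connected obstruction theory correctly. The difficulty is that $BP$ is not simple, so naive obstruction theory does not directly apply; the standard fix is to work with the fibration sequences defining $BP$ as a twisted product of Eilenberg--MacLane spaces with $\pi_1 = P$ acting, and to check that the local-coefficient obstruction groups still vanish — this is where one genuinely needs that $f$ is an $H^*(-;\F_p)$-epimorphism in a range large enough (all degrees) rather than just degrees $1$ and $2$. An alternative, possibly slicker route that sidesteps the bookkeeping: show directly that $f$ becomes an equivalence after $\F_p$-completion onto the (necessarily finite) image, or invoke that $X$ is ``$BS$-cellular-plus-acyclic'' so that $[X, BP] \to [BS, BP]$ is a bijection; I would try the explicit obstruction-theoretic argument first and fall back on the cellular-approximation formulation if the coefficient systems become unwieldy.
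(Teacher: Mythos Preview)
Your step (3) is both unjustified and unnecessary, and this is the real gap. The space $BP$ is aspherical ($BP=K(P,1)$), so there is no Postnikov tower with higher Eilenberg--MacLane fibers to climb; the tower you want comes instead from a central series of $P$. Picking a central $C_p\leq Z(P)$ gives a principal fibration $BC_p\to BP\to BQ$ classified by a map $k:BQ\to K(\Z/p,2)$, and one argues by induction on $|P|$. At each stage the only obstruction to lifting a map $g:X\to BQ$ to $X\to BP$ is the class $k\circ g\in H^2(X;\F_p)$; since its restriction along $f$ vanishes (the composite $BS\to BP\to BQ$ already lifts) and $f^*$ is injective on $H^2$, the obstruction dies. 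No information about $H^n$ for $n>2$ is ever required---and in the intended application $X=|\Ll|\pcom$, surjectivity of $f^*$ in all degrees is precisely the conclusion one is driving toward, so it cannot serve as an intermediate step. The arguments you sketch for step~(3) (generation of $H^*(BS;\F_p)$ over the Steenrod algebra by $H^1$, Lannes-type machinery) do not hold for general $p$-groups $S$ and general targets~$X$.

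There is one further point your outline misses even once the tower is set up correctly. Having produced a lift $h:X\to BP$ with $B\pi\circ h\circ f\simeq B\pi\circ B\phi$, you do not yet know that $h\circ f\simeq B\phi$: the two maps $BS\to BP$ lie over the same map to $BQ$ but may differ by the action of the central fiber $BC_p$, i.e.\ by a class $\alpha\in H^1(BS;\F_p)$. Surjectivity of $f^*$ on $H^1$ lets you extend $\alpha$ to a class on $X$ and twist $h$ accordingly. This two-step use of the hypothesis---injectivity on $H^2$ to kill the lifting obstruction, surjectivity on $H^1$ to correct the indeterminacy---is exactly what makes the induction close, and it explains why precisely degrees $1$ and $2$ appear in the statement.
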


\begin{proof}
We proceed by induction on the order of $P$. If $P \cong C_p$, the
cyclic group of order $p$, the conclusion is a direct consequence of
the surjectivity of $H^1(f; \F_p)$. Let us thus consider a $p$-group
$P$ of order $p^n$ with $n \geq 2$ and a group homomorphism $\phi: S
\rightarrow P$.

The center of a $p$-group is never trivial, therefore there is a
central extension $C_p \hookrightarrow P
\stackrel{\pi}{\twoheadrightarrow} Q$ and by induction there
exists a map $g: X \rightarrow BQ$ such that the following square
commutes up to homotopy
\[
\diagram BS \rto^{B\phi} \dto_f & BP \dto^{B\pi} \cr
X \rto^g & BQ.
\enddiagram
\]
Now we have to lift $g$ to a map $h: X \rightarrow BP$. The central
extension gives rise to a fibration $BP \rightarrow BQ
\stackrel{k}{\longrightarrow} K(\Z/p, 2)$. The composite map $k
\circ B\pi \circ B\phi \simeq k \circ g \circ f$ is null-homotopic,
and hence so is $k \circ g: X \rightarrow K(\Z/p, 2)$ because of the
injectivity of $H^2(f; \F_p)$. Therefore there exists a map $h: X
\rightarrow BP$ such that $B\pi \circ h \simeq g$.

In general, $h \circ f \not\simeq B\phi$ because they can differ by
the action of the central $C_p$. This means,
\cite[IX.4.1]{MR51:1825}, that there exists a map $\alpha: BS
\rightarrow BC_p$ such that the composite map
$$
BS \stackrel{\Delta}{\longrightarrow} BS \times BS \xrightarrow{(h
\circ f) \times \alpha} BP \times BC_p \rightarrow BP
$$
is equal to $B\phi$. By the induction hypothesis again, there
exists a map $\beta: X \rightarrow BC_p$ such that $\beta \circ h
\simeq \alpha$. The composite map
$$
X \stackrel{\Delta}{\longrightarrow} X \times X \xrightarrow{h
\times \beta} BP \times BC_p \rightarrow BP
$$
is the map we are seeking.
\end{proof}

We emphasize that the factorization through $f$ in the statement of
the previous proposition is not unique. So $f$ is only ``weakly
initial" for maps to classifying spaces of $p$-groups, or, in other
words, $BP$ is not quite local with respect to~$f$ (in the sense of e.g. \cite{MR1392221}).
This property is the key ingredient in the proof of the $p$-local version of
Stammbach's criterion \cite{MR0473039} about the second (co)homology
group and the Huppert-Thompson-Tate criterion \cite{MR0160822} about
the first one. Our proof goes along the same lines as in Stammbach's
note.

\begin{theorem}
\label{lowdegree}
Let $(S,\Ff,\Ll)$ be a $p$-local finite group and let $Bi: BS
\rightarrow |\Ll|$ be the standard inclusion. Then $(S,\Ff,\Ll)$
is nilpotent if and only if one of the following four conditions
is satisfied:

\begin{itemize}
\item[(1)] $Bi^*: H^1(|\Ll|; \F_p) \rightarrow H^1(BS; \F_p)$
is an isomorphism.

\item[(2)] $Bi^*: H^2(|\Ll|; \F_p) \rightarrow H^2(BS; \F_p)$ is an isomorphism.

\item[(3)] $Bi_*: H_1(BS; \F_p) \rightarrow H_1(|\Ll|; \F_p)$
is an isomorphism.

\item[(4)] $Bi_*: H_2(BS; \F_p) \rightarrow H_2(|\Ll|; \F_p)$ is an isomorphism.
\end{itemize}

\end{theorem}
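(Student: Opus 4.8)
The plan is to prove the chain of implications: nilpotency $\Rightarrow$ (all four conditions), and then each of (1), (2), (3), (4) $\Rightarrow$ nilpotency. The forward direction is immediate: if $(S,\Ff,\Ll)$ is nilpotent then by Proposition~\ref{prop cohomology} the map $Bi^*$ is an isomorphism in all degrees, and by the universal coefficient theorem (the relevant homology groups being finitely generated, as noted in the proof of Proposition~\ref{prop cohomology}) the homology maps are isomorphisms as well; so all four conditions hold. The content is entirely in the converse direction, and here the key is to reduce each condition to the hypotheses of Proposition~\ref{weaklyinitial}.

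First I would handle the cohomological conditions (1) and (2). Recall from Remark~\ref{stableretract} that $Bi^\wedge_p$ always induces an epimorphism in homology and a monomorphism in cohomology with arbitrary coefficients; in particular $Bi^*: H^*(|\Ll|;\F_p)\to H^*(BS;\F_p)$ is always injective, and $Bi_*: H_*(BS;\F_p)\to H_*(|\Ll|;\F_p)$ is always surjective. So assuming (1), i.e.\ that $Bi^*$ is an isomorphism in degree $1$, we automatically also know it is injective in degree $2$; hence $f = Bi^\wedge_p: BS \to |\Ll|^\wedge_p$ satisfies the hypotheses of Proposition~\ref{weaklyinitial} (surjective on $H^1$, injective on $H^2$), so $BS$ is a retract of $|\Ll|^\wedge_p$, and Proposition~\ref{retract} gives nilpotency. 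For condition (2), I would argue that an isomorphism on $H^2$ forces surjectivity on $H^1$: the truncation of the cohomology rings below degree $2$ together with the ring structure —or more simply, the fact that $H^1(BS;\F_p) = \Hom(S,\F_p)$ injects into $H^2(BS;\F_p)$ via the Bockstein composed with nothing, or via $x\mapsto x^2$ when $p=2$— is not quite enough on the nose, so the cleaner route is: since $Bi^*$ is injective on $H^1$ always, and $H^1(BS;\F_p)$ is detected inside $H^2$ (by Bockstein $\beta$ when $p$ odd together with the squaring map, or by squaring when $p=2$), an iso on $H^2$ plus naturality of these operations forces $Bi^*$ to be onto on $H^1$; then apply the previous paragraph. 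Alternatively, and perhaps more robustly, one passes to the five-term exact sequence of the (hypothetical) fibration or uses that $H^1$ and $H^2$ control $\pi_1$ and $H_2$ of a $p$-complete nilpotent space, reducing (2) to (1) and (4).

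For the homological conditions (3) and (4), I would simply dualize using the universal coefficient theorem over the field $\F_p$: $H^n(-;\F_p) \cong \Hom_{\F_p}(H_n(-;\F_p),\F_p)$, so $Bi_*$ being an isomorphism on $H_1$ (resp.\ $H_2$) is equivalent to $Bi^*$ being an isomorphism on $H^1$ (resp.\ $H^2$), provided the groups in question are finite-dimensional. Finite-dimensionality of $H_1(|\Ll|;\F_p)$ and $H_2(|\Ll|;\F_p)$ follows from the Noetherian property of $H^*(|\Ll|;\F_p)$ cited in the proof of Proposition~\ref{prop cohomology}. Thus (3) $\Leftrightarrow$ (1) and (4) $\Leftrightarrow$ (2), and we are reduced to the cohomological cases already treated.

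I expect the main obstacle to be the implication (2) $\Rightarrow$ nilpotency, specifically showing that an isomorphism on $H^2$ forces the $H^1$-hypothesis of Proposition~\ref{weaklyinitial}. The subtlety is that a priori $Bi^*$ could fail to be surjective on $H^1$ while still being an isomorphism on $H^2$, if the cokernel of $H^1(Bi)$ contributed nothing to $H^2$; one must use the multiplicative structure (the squaring operation $H^1 \to H^2$ at $p=2$, and the pair $(\text{Bockstein}, \text{squaring})$ at odd $p$, which together detect $H^1$) and naturality with respect to $Bi^*$ to rule this out. Once that is secured, everything else is a formal consequence of Proposition~\ref{weaklyinitial}, Proposition~\ref{retract}, Remark~\ref{stableretract}, and the universal coefficient theorem.
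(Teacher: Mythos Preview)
Your treatment of the forward direction, of (1)$\Rightarrow$nilpotent via Proposition~\ref{weaklyinitial} and Proposition~\ref{retract}, and of the equivalences (1)$\Leftrightarrow$(3), (2)$\Leftrightarrow$(4) via the universal coefficient theorem over $\F_p$, is correct and matches the paper. The gap is in your argument for (2)$\Rightarrow$(1).

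The claim that $H^1(BS;\F_p)$ is ``detected inside $H^2$'' by the Bockstein (for $p$ odd) or by squaring (for $p=2$) is false. Take $S=\Z/p^2$. The generator $x\in H^1(BS;\F_p)=\Hom(S,\F_p)$ is the reduction map $\Z/p^2\to\Z/p$, which lifts to the identity $\Z/p^2\to\Z/p^2$; hence $\beta(x)=0$. For $p=2$ this is the same computation, since $x^2=Sq^1(x)=\beta(x)$. For odd $p$, squaring on $H^1$ is identically zero by graded commutativity. So neither operation, nor any combination of them, gives a natural injection $H^1\hookrightarrow H^2$, and the naturality square you want to exploit simply does not force surjectivity of $Bi^*$ on $H^1$. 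Your fallback suggestions (a five-term exact sequence for an unspecified fibration, or ``$H^1$ and $H^2$ control $\pi_1$ and $H_2$'') are not concrete enough to fill the gap; for instance, the low-degree exact sequence for the fibration $\Fib(Bi)\to BS\to|\Ll|\pcom$ only yields a short exact sequence $0\to H^1(|\Ll|)\to H^1(BS)\to H^1(\Fib)^{\pi_1}\to 0$ from the hypothesis on $H^2$, which does not by itself kill the cokernel.

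The paper handles this step differently, following Stammbach: it works homologically and with \emph{integral} coefficients. Assuming (4), one compares the universal coefficient short exact sequences
\[
0\to H_2(-;\Z)\otimes\Z/p\to H_2(-;\F_p)\to \tor(H_1(-;\Z),\Z/p)\to 0
\]
for $BS$ and $|\Ll|\pcom$. The middle map is an isomorphism by hypothesis and the left map is always surjective by Remark~\ref{stableretract}; a diagram chase then shows the right-hand map on $\tor(H_1(-;\Z),\Z/p)$ is an isomorphism. Applying $\tor(-,\Z/p)$ to the short exact sequence $0\to K\to H_1(S;\Z)\to H_1(|\Ll|\pcom;\Z)\to 0$ gives $\tor(K,\Z/p)=0$, and since $K$ is a finite $p$-group this forces $K=0$. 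Thus $H_1(Bi;\Z)$ is an isomorphism, hence so is $H_1(Bi;\F_p)$, establishing~(3). This is the missing idea in your proposal.
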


\begin{proof}
The universal coefficient formula clearly implies the equivalence
of the homological conditions and the cohomological ones. We
follow Stammbach's strategy \cite{MR0473039} and assume that (4)
holds. Consider the universal coefficients exact sequences:
\[
\diagram
0 \rto & H_2(S; \Z) \otimes \Z/p \rto \dto^{\alpha} & H_2(S; \F_p) \rto \dto^{\cong} & \tor(H_1(S; \Z); \Z/p) \dto^{\beta} \rto & 0 \\
0 \rto & H_2(|\Ll|^\wedge_p; \Z) \otimes \Z/p \rto & H_2(|\Ll|^\wedge_p; \F_p) \rto & \tor(H_1(|\Ll|^\wedge_p; \Z); \Z/p) \rto & 0. \\
\enddiagram
\]
We see that $\alpha$ must be a monomorphism and $\beta$ an
epimorphism. By Remark~\ref{stableretract} we know that $\alpha$
is an epimorphism, so that $\beta$ is actually an isomorphism.

Now the short exact sequence $K= \Ker(H_1(Bi; \Z)) \hookrightarrow
H_1(S; \Z) \twoheadrightarrow H_1(|\Ll|^\wedge_p; \Z)$ induces an
exact sequence
$$
0 \rightarrow \tor(K; \Z/p) \rightarrow \tor(H_1(S; \Z); \Z/p)
\xrightarrow{\beta} \tor(H_1(|\Ll|^\wedge_p; \Z); \Z/p),
$$
so that $\tor(K; \Z/p)=0$. Since the homology of $S$ is $p$-torsion,
$K$ must be trivial. Therefore $H_1(Bi; \Z)$ is an isomorphism, and
in particular (3) holds.

It remains to prove that condition~(3) -- or equivalently
condition~(1) -- implies that $(S,\Ff,\Ll)$ is nilpotent. We assume
therefore that $H^1(Bi; \F_p)$ is an isomorphism. Since $H^2(Bi;
\F_p)$ is always a monomorphism, Proposition~\ref{weaklyinitial}
applies and we deduce that $BS$ is a retract of $|\Ll|^\wedge_p$.
This means by Proposition~\ref{retract} that the $p$-local finite
group is nilpotent.
\end{proof}

Using the appropriate universal coefficients theorems for the
$\Z^\wedge_p$-module $\F_p$, one can easily replace the $\F_p$
coefficients by the $p$-adic integers.

\begin{corollary}
\label{pcompletedlowdegree}
Let $(S,\Ff,\Ll)$ be a $p$-local finite group and let $Bi: BS
\rightarrow |\Ll|$ be the standard inclusion. Then $(S,\Ff,\Ll)$
is nilpotent if and only if one of the following two conditions is
satisfied:

\begin{itemize}
\item[(1)] $Bi_*: H_1(|\Ll|; \Z^\wedge_p) \rightarrow H_1(BS; \Z^\wedge_p)$
is an isomorphism.

\item[(2)] $Bi^*: H^2(|\Ll|; \Z^\wedge_p) \rightarrow H^2(BS; \Z^\wedge_p)$
is an isomorphism. \hfill{\qed}
\end{itemize}

\end{corollary}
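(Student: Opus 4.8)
The plan is to deduce the statement from Theorem~\ref{lowdegree} by transporting its mod~$p$ conditions to $\Z^\wedge_p$ coefficients through the universal coefficient theorem over the principal ideal domain $\Z^\wedge_p$, supplemented for the $H^2$-criterion by the stable splitting of Remark~\ref{stableretract}. A preliminary remark I would record is that $|\Ll|$ is the nerve of a finite category, hence a CW-complex of finite type; consequently any map between spaces of finite type that induces an isomorphism on $H_*(-;\F_p)$ also induces isomorphisms on $H_*(-;\Z^\wedge_p)$ and on $H^*(-;\Z^\wedge_p)$ (Bockstein induction on the coefficients $\Z/p^k$, then a $\varprojlim$ whose $\varprojlim^1$ vanishes because all the groups involved are finite). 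In particular the completion map $|\Ll|\to|\Ll|\pcom$ is an isomorphism on $\Z^\wedge_p$-(co)homology, so conditions~(1) and~(2) are unchanged if $|\Ll|$ is replaced by $|\Ll|\pcom$.

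If $(S,\Ff,\Ll)$ is nilpotent, then $Bi\pcom\colon BS\to|\Ll|\pcom$ is a homotopy equivalence by Proposition~\ref{prop Martino-Priddy}, and as $|\Ll|$ is $p$-good the canonical inclusion $Bi$ is itself a mod~$p$ homology isomorphism; by the preliminary remark it induces isomorphisms with $\Z^\wedge_p$ coefficients on homology and cohomology, so both (1) and (2) hold. Conversely, assume (1), that is, $Bi$ induces an isomorphism on $H_1(-;\Z^\wedge_p)$. Applying universal coefficients over $\Z^\wedge_p$ to the free $\Z^\wedge_p$-chain complex $C_*(X;\Z)\otimes_\Z\Z^\wedge_p$, and using $C_*(X;\F_p)\cong\bigl(C_*(X;\Z)\otimes_\Z\Z^\wedge_p\bigr)\otimes_{\Z^\wedge_p}\F_p$ together with the freeness of $H_0(X;\Z^\wedge_p)$, one gets a natural isomorphism $H_1(X;\F_p)\cong H_1(X;\Z^\wedge_p)\otimes_{\Z^\wedge_p}\F_p$ (the $\tor$-term, involving $H_0$, vanishes). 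Hence $Bi$ induces an isomorphism on $H_1(-;\F_p)$, and $(S,\Ff,\Ll)$ is nilpotent by Theorem~\ref{lowdegree}(3).

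It remains to show that (2) implies nilpotency. By Remark~\ref{stableretract}, together with the stable retraction it depends on, we may write $\Sigma^\infty BS\simeq\Sigma^\infty|\Ll|\pcom\vee W$ so that, with arbitrary coefficients, $(Bi\pcom)^*$ becomes the inclusion of the summand $\widetilde H^*(|\Ll|\pcom)$ into $\widetilde H^*(|\Ll|\pcom)\oplus\widetilde H^*(W)=\widetilde H^*(BS)$. By the preliminary remark, hypothesis~(2) says $(Bi\pcom)^*$ is an isomorphism on $H^2(-;\Z^\wedge_p)$, i.e.\ $\widetilde H^2(W;\Z^\wedge_p)=0$. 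Also $\widetilde H^1(W;\Z^\wedge_p)$ is a direct summand of $H^1(BS;\Z^\wedge_p)=\Hom(S^{\mathrm{ab}},\Z^\wedge_p)$, which vanishes since $S^{\mathrm{ab}}$ is a finite $p$-group and $\Z^\wedge_p$ is torsion-free; hence $\widetilde H^1(W;\Z^\wedge_p)=0$ as well. The Bockstein long exact sequence for $0\to\Z^\wedge_p\stackrel{p}{\to}\Z^\wedge_p\to\F_p\to 0$ applied to $W$ contains the segment $\widetilde H^1(W;\Z^\wedge_p)\to\widetilde H^1(W;\F_p)\to\widetilde H^2(W;\Z^\wedge_p)$, that is $0\to\widetilde H^1(W;\F_p)\to 0$, so $\widetilde H^1(W;\F_p)=0$. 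Therefore $Bi^*\colon H^1(|\Ll|;\F_p)\to H^1(BS;\F_p)$ is an isomorphism, and $(S,\Ff,\Ll)$ is nilpotent by Theorem~\ref{lowdegree}(1).

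The implication (2)~$\Rightarrow$~nilpotency is the only delicate point: mod~$p$ cohomology is recovered from $\Z^\wedge_p$-cohomology only as an extension of a $\Hom$-term by an $\operatorname{Ext}$-term, so universal coefficients by themselves do not translate the $H^2(-;\Z^\wedge_p)$ hypothesis into a mod~$p$ statement. It is the stable summand decomposition of Remark~\ref{stableretract}---which forces $(Bi\pcom)^*$ to be split injective for every coefficient ring, and which, combined with $H^1(BS;\Z^\wedge_p)=0$, controls $\widetilde H^{\le 2}(W)$---that makes the Bockstein computation go through. The other two implications, and the ``only if'' direction, are routine once this $\Z^\wedge_p$-coefficient bookkeeping is set up.
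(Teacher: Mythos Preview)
Your proof is correct and follows the route the paper indicates: the paper does not give a proof at all beyond the sentence ``Using the appropriate universal coefficients theorems for the $\Z^\wedge_p$-module $\F_p$, one can easily replace the $\F_p$ coefficients by the $p$-adic integers,'' and you have supplied the details of that reduction to Theorem~\ref{lowdegree}. One point worth highlighting is that you have noticed something the paper glosses over: the implication (2)~$\Rightarrow$~nilpotency does not fall out of a naked universal-coefficient statement, since universal coefficients relates $H^*(-;\F_p)$ to $H_*(-;\Z^\wedge_p)$ rather than to $H^*(-;\Z^\wedge_p)$. Your use of the stable splitting of Remark~\ref{stableretract}, the vanishing $H^1(BS;\Z^\wedge_p)=0$, and the Bockstein sequence is exactly the extra input needed to close that gap, and it is a cleaner justification than the paper's one-line dismissal.
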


\section{Global fusion criteria}
\label{sec global}
This section is devoted to criteria which allow to recognize a
nilpotent $p$-local finite group from its global fusion
characteristics. We first need some facts about the fundamental
group of a $p$-local finite group. The following definition is due
to Puig \cite{Puig}. The notation $O^p(-)$ stands for the subgroup
generated by elements of order prime to~$p$.

\begin{definition}
\label{focal}
{\rm For any saturated fusion system $\Ff$ over a $p$-group $S$,
the {\em hyperfocal subgroup} $Hyp (\Ff)$ is the normal subgroup
$\langle g^{-1}\alpha(g)\, |\, g\in P\leq S, \alpha\in
O^p(\aut_{\Ff}(P))\rangle$. The {\em focal subgroup} $Foc (\Ff)$ is
the normal subgroup $\langle g^{-1}\alpha(g)\, |\, g\in P\leq S,
\alpha\in \aut_{\Ff}(P)\rangle$.}
\end{definition}

\begin{theorem}{\rm (\cite[Theorem~B]{MR2302515})}
\label{bcglo-thB}
Let $(S,\Ff,\Ll)$ be a $p$-local finite group. Then
$\pi_1(|\Ll|\pcom) \cong S/Hyp (\Ff)$. \hfill{\qed}
\end{theorem}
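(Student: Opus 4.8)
The plan is to compute $\pi_1(|\Ll|)$ first as a quotient of $S$, and then to show that $p$-completion does not change this group because it is already a finite $p$-group. Recall that $|\Ll|$ is the geometric realization of a small category, so by van Kampen its fundamental group is the fundamental group of the underlying graph/quiver modulo the relations coming from commutative triangles. More conceptually, I would use the canonical inclusion $Bi\colon BS \to |\Ll|$, which sends the single object of $BS$ to the object $S \in \Ll$; on $\pi_1$ this gives a homomorphism $S \to \pi_1(|\Ll|)$. The first key step is to argue this map is surjective: every object of $\Ll$ is an $\Ff$-centric subgroup $P \le S$, and since $\Ll$ has the distinguished monomorphisms $\delta_P$ and the inclusions $\iota_P^S\colon P \to S$ lift to $\Ll$, every object is connected to the object $S$ by a morphism, so $|\Ll|$ is connected and generated at the level of $\pi_1$ by loops at $S$; using condition~(A) of Definition~\ref{def linkingsystem} one sees these loops come from $\aut_{\Ll}(S)$, hence from $S$ itself via $\delta_S$ together with the morphisms of $\Ff$. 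So $\pi_1(|\Ll|)$ is a quotient $S/H$ for some normal subgroup $H$.

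The second, and main, step is to identify $H$ with $\Hyp(\Ff)$. One inclusion $\Hyp(\Ff) \subseteq H$ should follow by exhibiting, for each $\Ff$-centric $P$, each $\alpha \in O^p(\aut_{\Ff}(P))$ and each $g \in P$, an explicit null-homotopic loop in $|\Ll|$ that reads $g^{-1}\alpha(g)$ in $S$: choose a lift $\tilde\alpha \in \aut_{\Ll}(P)$ of $\alpha$; the commuting square (C) relates $\delta_P(g)$ and $\delta_P(\alpha(g))$ via $\tilde\alpha$, and since $\alpha$ has order prime to $p$ its lift can be arranged to have order prime to $p$ as well, so that $\tilde\alpha$ is a product of commutators/torsion elements that die in the abelianization-after-$p$-completion — more precisely, the loop $\tilde\alpha$ itself is trivial in $\pi_1$ after composing with the structure maps, forcing $g^{-1}\alpha(g) \equiv 1$. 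For the reverse inclusion $H \subseteq \Hyp(\Ff)$, I would invoke Alperin's fusion theorem (Theorem~\ref{AlperinFusion}): any relation in $\pi_1(|\Ll|)$ is a composite of relations supported on fully normalized, $\Ff$-radical, $\Ff$-centric subgroups $Q$, and on each such $Q$ one can split $\aut_{\Ff}(Q)$ using $\aut_S(Q) \in \Syl_p(\aut_{\Ff}(Q))$ (saturation axiom (I)); the $p$-part contributes only inner fusion (already realized in $S$, hence trivial relations modulo nothing), while the $p'$-part is exactly $O^p$, contributing elements of $\Hyp(\Ff)$. Combining, $H = \Hyp(\Ff)$ and $\pi_1(|\Ll|) \cong S/\Hyp(\Ff)$.

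The final step is to pass to the $p$-completion. Since $\Hyp(\Ff)$ is normal in the $p$-group $S$, the quotient $S/\Hyp(\Ff)$ is a finite $p$-group, in particular a finite nilpotent group. The $p$-completion map $|\Ll| \to |\Ll|\pcom$ induces on fundamental groups the Bousfield--Kan $p$-completion of the group $\pi_1(|\Ll|)$ (one must check $|\Ll|$ is, say, nilpotent enough or at least that $\pi_1$ acts nilpotently, or simply invoke that $\pi_1$ of the $p$-completion of a space with finite $p$-group $\pi_1$ is that same $p$-group); as a finite $p$-group is already $p$-complete, $\pi_1(|\Ll|\pcom) \cong \pi_1(|\Ll|) \cong S/\Hyp(\Ff)$.

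The hardest part will be step two, and specifically the reverse inclusion $H \subseteq \Hyp(\Ff)$: extracting from an arbitrary loop a factorization into pieces living over $\Ff$-radical $\Ff$-centric subgroups and then peeling off the Sylow $p$-subgroup of each automorphism group to see that only the $O^p$-part can contribute a nontrivial class requires care with basepoints and with the bookkeeping of which morphisms in $\Ll$ are "already in $S$." I expect this is handled cleanly by citing the construction of $|\Ll|$ as a homotopy colimit over the orbit category and the known computation of $\pi_1$ of such a homotopy colimit, reducing everything to Alperin's theorem and the hyperfocal subgroup theorem of Puig; but the explicit verification that the $p$-completion does not collapse anything further — i.e.\ that $S/\Hyp(\Ff)$ is genuinely the fundamental group and not merely a quotient target — is the technical crux.
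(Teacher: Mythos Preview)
The paper does not give a proof of this statement: it is quoted verbatim as Theorem~B of \cite{MR2302515} and closed with a \qed. There is therefore no ``paper's own proof'' to compare your proposal against.

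Independently of that, your argument has a genuine gap. Steps~1 and~2 assert that the map $S\to\pi_1(|\Ll|)$ induced by $\delta_S$ is surjective and that $\pi_1(|\Ll|)\cong S/\Hyp(\Ff)$ \emph{before} $p$-completion; step~3 then says $p$-completion is harmless because this is already a finite $p$-group. All three claims fail in general. Take $G=A_4$ at $p=2$: the Sylow is $S=V_4$, the only $\Ff$-centric subgroup is $S$ itself, $C'_G(S)=1$, and $\aut_{\Ll}(S)=N_G(S)/C'_G(S)=A_4$. Thus $\Ll$ is a one-object category with automorphism group $A_4$, so $|\Ll|\simeq BA_4$ and $\pi_1(|\Ll|)=A_4$, which is neither a $2$-group nor a quotient of $S=V_4$. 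The surjectivity in step~1 breaks precisely because $\delta_S\colon S\to\aut_{\Ll}(S)$ need not be onto whenever $\Out_{\Ff}(S)\neq 1$; the loops in $|\Ll|$ coming from the prime-to-$p$ part of $\aut_{\Ll}(P)$ are honestly nontrivial in $\pi_1(|\Ll|)$.

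These prime-to-$p$ loops are killed only \emph{by} the $p$-completion, which is why the theorem is stated for $|\Ll|\pcom$ and not for $|\Ll|$. In \cite{MR2302515} the argument works directly with $|\Ll|\pcom$: one constructs a subsystem with Sylow $\Hyp(\Ff)$ whose linking system realizes the universal cover of $|\Ll|\pcom$, and the identification of $\pi_1$ comes from that covering, not from a naive van~Kampen computation on $|\Ll|$. The $p$-completion is the mechanism, not a cosmetic final step.
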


We have seen in the previous section that the group
$H_1(|\Ll|\pcom;\Z\pcom) \cong \pi_1(|\Ll|\pcom)_{ab}$ plays an
important role in establishing the nilpotence of a given $p$-local
finite group. The computation in the next proposition (compare with
\cite[Theorem~7.3.4]{MR569209}) explains the relation between the
focal and the hyperfocal subgroups.

\begin{proposition}
\label{prop H1Foc}
Let $(S,\Ff,\Ll)$ be a $p$-local finite group. Then
$H_1(|\Ll|\pcom;\Z\pcom) \cong S/Foc (\Ff)$.
\end{proposition}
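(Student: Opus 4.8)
The plan is to combine Theorem~\ref{bcglo-thB} with the universal coefficient theorem, so the real content is the purely group-theoretic identity
\[
S/Foc(\Ff) \;\cong\; \bigl(S/Hyp(\Ff)\bigr)_{ab}.
\]
Indeed, $H_1(|\Ll|\pcom;\Z\pcom)\cong \pi_1(|\Ll|\pcom)_{ab}\cong \bigl(S/Hyp(\Ff)\bigr)_{ab}$ by Theorem~\ref{bcglo-thB} and the Hurewicz theorem (using that $\pi_1$ is a finite $p$-group, so abelianization commutes with $p$-completion and $\Z\pcom$-coefficients), so it suffices to identify $\bigl(S/Hyp(\Ff)\bigr)_{ab}$ with $S/Foc(\Ff)$.

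First I would record the two obvious facts: $Hyp(\Ff)\le Foc(\Ff)$, since $O^p(\aut_\Ff(P))\le \aut_\Ff(P)$ for every $P\le S$, and $[S,S]\le Foc(\Ff)$, since for $g\in S$ the inner automorphism $c_s\in \aut_\Ff(S)$ (coming from $\aut_S(S)\subseteq\aut_\Ff(S)$) contributes $g^{-1}c_s(g)=g^{-1}sgs^{-1}=[g^{-1},s^{-1}]$, and these commutators generate $[S,S]$. Consequently the subgroup $[S,S]\,Hyp(\Ff)$ is contained in $Foc(\Ff)$, and $\bigl(S/Hyp(\Ff)\bigr)_{ab} = S/[S,S]Hyp(\Ff)$. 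So the proposition reduces to the reverse inclusion
\[
Foc(\Ff)\;\le\; [S,S]\,Hyp(\Ff).
\]

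For this inclusion I would argue that $\aut_\Ff(P)$ is generated by $O^p(\aut_\Ff(P))$ together with a Sylow $p$-subgroup, which by the saturation axiom~(I) can be taken (after replacing $P$ by a fully normalized $\Ff$-conjugate, which does not change the resulting subgroup of $S$ up to conjugacy and hence does not change the image modulo $[S,S]$) to be $\aut_S(P)$. Thus every $\alpha\in\aut_\Ff(P)$ can be written $\alpha=\beta\circ c_x$ with $\beta\in O^p(\aut_\Ff(P))$ and $x\in N_S(P)$. Then for $g\in P$,
\[
g^{-1}\alpha(g) = g^{-1}\beta(c_x(g)) = g^{-1}\beta(g)\cdot \beta(g)^{-1}\beta(c_x(g)) = \bigl(g^{-1}\beta(g)\bigr)\cdot \beta\bigl(g^{-1}c_x(g)\bigr),
\]
and $g^{-1}\beta(g)\in Hyp(\Ff)$ while $g^{-1}c_x(g)=[g^{-1},x^{-1}]\in[S,S]$; since $\beta$ maps $P$ into $S$ and $Hyp(\Ff)$ is normal, $\beta\bigl(g^{-1}c_x(g)\bigr)$ lies in $\beta([S,S])\le[S,S]$ once we also know $[S,S]Hyp(\Ff)$ is preserved by all $\Ff$-morphisms — which it is, being characteristic enough: $Hyp(\Ff)$ is normal in $S$ and stable under $\Ff$ by Puig's theory, and $[S,S]$ maps into $[S,S]$ under any homomorphism $S\to S$. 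Hence $g^{-1}\alpha(g)\in[S,S]Hyp(\Ff)$, and since such elements generate $Foc(\Ff)$ we get $Foc(\Ff)\le[S,S]Hyp(\Ff)$, completing the identification.

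The main obstacle I anticipate is the bookkeeping needed to legitimately reduce to a fully normalized representative and to invoke axiom~(I): one must check that the subgroup generated by the focal/hyperfocal relations is unchanged when $P$ is replaced by an $\Ff$-conjugate (because the relations transport along isomorphisms in $\Ff$, as $Hyp(\Ff)$ and $[S,S]$ are both stable under all morphisms of $\Ff$), and that the factorization $\alpha=\beta\circ c_x$ is available for a generating set of $\aut_\Ff(P)$. Once that is in place, the computation above is formal, and the rest — feeding $\bigl(S/Hyp(\Ff)\bigr)_{ab}=S/[S,S]Hyp(\Ff)=S/Foc(\Ff)$ back through Theorem~\ref{bcglo-thB} and universal coefficients — is routine.
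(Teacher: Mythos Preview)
Your approach is essentially the paper's: reduce to $\bigl(S/Hyp(\Ff)\bigr)_{ab}=S/Foc(\Ff)$ via Theorem~\ref{bcglo-thB}, i.e.\ show $Foc(\Ff)=[S,S]\,Hyp(\Ff)$, by passing to fully normalized $P$ so that $\aut_S(P)\in\Syl_p(\aut_\Ff(P))$ and then writing automorphisms as an $O^p$-part times an $S$-conjugation. The paper carries out exactly this, phrasing the last step as ``$p$-elements of $\aut_\Ff(P)$ are $O^p$-conjugate into $\aut_S(P)$'', which is equivalent to your coset factorization $\alpha=\beta\circ c_x$.

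Two places in your write-up are not quite right, though the fixes are easy. First, the reduction to fully normalized $P$: replacing $P$ by an $\Ff$-conjugate changes $g^{-1}\alpha(g)$ by an $\Ff$-isomorphism, not by $S$-conjugacy, and your claim that ``$[S,S]$ is stable under all morphisms of $\Ff$'' is exactly what you are trying to prove (an $\Ff$-morphism restricted to $[S,S]\cap P$ need not land in $[S,S]$ a priori). The clean way out, which is what the paper does, is to invoke Alperin's fusion theorem (Theorem~\ref{AlperinFusion}): any $\alpha\in\aut_\Ff(P)$ is a composite of restrictions of automorphisms of fully normalized subgroups $Q_i$, and the telescoping identity $g^{-1}\alpha(g)=\prod_i g_{i-1}^{-1}\phi_i(g_{i-1})$ shows that $Foc(\Ff)$ is already generated by focal elements coming from fully normalized $Q_i$.

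Second, your justification that $\beta(g^{-1}c_x(g))\in[S,S]Hyp(\Ff)$ appeals to $\beta$ preserving $[S,S]$, but $\beta$ is only an automorphism of $P$, not of $S$. The correct one-line argument is: for $c=g^{-1}c_x(g)\in[S,S]\cap P$, write $\beta(c)=c\cdot\bigl(c^{-1}\beta(c)\bigr)$ with $c\in[S,S]$ and $c^{-1}\beta(c)\in Hyp(\Ff)$ since $\beta\in O^p(\aut_\Ff(P))$. With these two adjustments your proof is complete and matches the paper's.
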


\begin{proof}
The group $Foc (\Ff)$ is generated by $Hyp (\Ff)$ and elements
$g^{-1}\alpha(g)$ where $\alpha\in \aut_{\Ff}(P)\setminus
O^p(\aut_{\Ff}(P))$ for some $P \leq S$. By Alperin's fusion theorem, Theorem~\ref{AlperinFusion},
it is enough to consider those subgroups $P$ that are fully normalized in $\Ff$.

Assume then that $P$ is fully normalized in $\Ff$. Since $\aut_S(P) \in \Syl_p(\aut_{\Ff}(P))$,
any elements of $p$-power order in $\aut_{\Ff}(P)$ can be conjugated to a
conjugation in $S$ via an element of $O^p(\aut_{\Ff}(P))$. Therefore
$Foc (\Ff)$ is generated by $Hyp (\Ff)$ and commutators in $[S,S]$.
In view of Theorem~\ref{bcglo-thB}, we conclude that the abelianization
of $\pi_1(|\Ll|\pcom) \cong S/Hyp (\Ff)$ is $S/Foc (\Ff)$.
\end{proof}

Recall the following definitions from \cite[Definition~A.3]{MR1992826}.

\begin{definition}
\label{NormalizerCentralizer}
{\rm The \emph{normalizer fusion system} $N_{\Ff}(Q)$ of a subgroup $Q \leq S$ in $\Ff$ is the
fusion system defined over $N_S(Q)$ whose morphisms are given by:
\[ \Hom_{N_{\Ff}(Q)}(P,P') = \{ \phi \in \Hom _{\Ff}(P,P') \mid \exists \psi \in \Hom _{\Ff}(PQ,P'Q),\hspace{1mm} \psi|_P = \phi,\hspace{1mm} \psi(Q) \leq Q \}. \]
The \emph{centralizer fusion system} $C_{\Ff}(Q)$ of a subgroup $Q \leq S$ in $\Ff$ is the
fusion system defined over $C_S(Q)$ whose morphisms are given by:
\[ \Hom_{C_{\Ff}(Q)}(P,P') = \{ \phi \in \Hom _{\Ff}(P,P') \mid \exists \psi \in \Hom _{\Ff}(PQ,P'Q),\hspace{1mm} \psi|_P = \phi,\hspace{1mm} \psi|_Q = \id \}. \]
}
\end{definition}

These definitions extend to centric linking systems, see \cite[Definition~2.4]{MR1992826}
and \cite[Definition~6.1]{MR1992826}.

\begin{definition}{\rm (\cite[Definition~1.5]{MR2167090})}
\label{WeaklyClosed}
{\rm A subgroup of $S$ is $\Ff$-{\em weakly closed} if no other
subgroup is $\Ff$-conjugate to it.}
\end{definition}

\begin{lemma}
\label{center-wc}
Let $(S,\Ff,\Ll)$ be a $p$-local finite group and let $V\leq Z(S)$
be $\Ff$-weakly closed. Then $V$ is normal in $(S,\Ff,\Ll)$, that
is, $(S,\Ff,\Ll) = (N_S(V),N_{\Ff}(V), N_{\Ll}(V))$.
\end{lemma}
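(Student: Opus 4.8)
The plan is to show that $V$ is fully normalized and that $N_{\Ff}(V) = \Ff$, then invoke the fact (from \cite{MR1992826}) that a normal centric linking system satisfies $N_{\Ll}(V) = \Ll$. First I would observe that since $V$ is $\Ff$-weakly closed, the only subgroup $\Ff$-conjugate to $V$ is $V$ itself, so vacuously $|N_S(V)| \geq |N_S(V')|$ for all $\Ff$-conjugates $V'$, which makes $V$ fully normalized in $\Ff$; moreover $V \leq Z(S)$ forces $N_S(V) = S$, so $N_{\Ff}(V)$ is a fusion system over $S$ itself. By \cite[Proposition~A.6]{MR1992826} (normalizer fusion systems of fully normalized subgroups are saturated), $N_{\Ff}(V)$ is a saturated fusion system over $S$.

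Next I would prove the key inclusion $\Hom_{\Ff}(P,P') \subseteq \Hom_{N_{\Ff}(V)}(P,P')$ for all $P,P' \leq S$; the reverse inclusion holds by definition. Given $\phi \in \Hom_{\Ff}(P,P')$, I want to extend it to a morphism $\psi \in \Hom_{\Ff}(PV,P'V)$ with $\psi(V) \leq V$, and in fact $\psi|_V = \id_V$ would be even better but is not needed here. The point is that since $V \leq Z(S)$, we have $PV \leq S$ and $P'V \leq S$, and $V$ is central in $S$, hence $V$ is centralized by $P$ and $P'$. Using the extension axiom (II) of a saturated fusion system applied to $\phi$ (after first checking the relevant conjugate is fully centralized, which follows since everything $\Ff$-conjugate to $V$ equals $V$ and $V$ is central), I would obtain an extension of $\phi$ over a subgroup containing $PV$; the crucial claim is that such an extension $\psi$ must satisfy $\psi(V) = V$, because $\psi(V)$ is $\Ff$-conjugate to $V$ and $V$ is weakly closed, so $\psi(V) = V$ automatically. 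This gives $\phi \in \Hom_{N_{\Ff}(V)}(P,P')$, so $N_{\Ff}(V) = \Ff$.

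Having shown $N_{\Ff}(V) = \Ff$ with $N_S(V) = S$, it remains to identify the linking systems. Here I would use \cite[Definition~6.1 and the surrounding discussion]{MR1992826}: when $V$ is fully normalized, $N_{\Ll}(V)$ is a centric linking system associated to $N_{\Ff}(V)$, with objects the $N_{\Ff}(V)$-centric subgroups of $N_S(V)$. Since $N_{\Ff}(V) = \Ff$ and $N_S(V) = S$, the categories $N_{\Ll}(V)$ and $\Ll$ have the same objects and the same associated fusion system; by the uniqueness of centric linking systems (Proposition \ref{UniqueLinking} suffices here because $\Ff$ being normal in this way is constrained, or one can appeal directly to Oliver's uniqueness result), $N_{\Ll}(V) \cong \Ll$, and in fact the natural inclusion functor $N_{\Ll}(V) \hookrightarrow \Ll$ is an isomorphism. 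I expect the main obstacle to be the careful bookkeeping in the extension argument of the second paragraph: one must make sure that the hypotheses of axiom (II) are genuinely met (the target being fully centralized) and that the extension can be arranged over $PV$ rather than just some intermediate group, for which the weak closure of $V$ together with its centrality in $S$ is exactly what is needed.
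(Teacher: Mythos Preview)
Your overall strategy is sound, but there are two genuine gaps.

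\textbf{The extension step.} Axiom~(II) requires the \emph{image} $\phi(P)$ to be fully centralized, not $V$. Your parenthetical justification (``everything $\Ff$-conjugate to $V$ equals $V$ and $V$ is central'') establishes that $V$ is fully centralized, which is irrelevant here; for an arbitrary $\phi\in\Hom_{\Ff}(P,P')$ the target $\phi(P)$ need not be fully centralized. This can be repaired by the standard detour: choose $\chi\in\Iso_{\Ff}(\phi(P),P'')$ with $P''$ fully centralized, apply axiom~(II) to both $\chi\phi$ and $\chi$ (in each case $V\leq Z(S)$ acts trivially by conjugation, so $V\leq N_{\chi\phi}$ and $V\leq N_{\chi}$), note that each extension sends $V$ to an $\Ff$-conjugate of $V$, hence to $V$ by weak closure, and then compose back. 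So the approach works, but not for the reason you gave.

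\textbf{The linking system.} Proposition~\ref{UniqueLinking} applies only to $\Ff_S(S)$, and having $V$ normal does not force a general $\Ff$ to be constrained, so neither justification you offer is valid. Appealing to Oliver's uniqueness theorem is also overkill and only yields an isomorphism, whereas the statement asserts equality. The correct (and trivial) argument is that by \cite[Definition~6.1]{MR1992826} one has $\Mor_{N_{\Ll}(V)}(P,Q)=\pi^{-1}\bigl(\Hom_{N_{\Ff}(V)}(P,Q)\bigr)$; once $N_{\Ff}(V)=\Ff$ and $N_S(V)=S$, this gives $N_{\Ll}(V)=\Ll$ on the nose.

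\textbf{Comparison with the paper.} The paper avoids the fully-centralized bookkeeping entirely by invoking Alperin's fusion theorem (Theorem~\ref{AlperinFusion}): every $\Ff$-morphism is a composite of restrictions of automorphisms $\alpha_i\in\aut_{\Ff}(P_i)$ with $P_i$ $\Ff$-centric. Since $V\leq Z(S)\leq C_S(P_i)\leq P_i$ and $V$ is weakly closed, each $\alpha_i$ already has $V$ in its domain and satisfies $\alpha_i(V)=V$, so the desired extension to $PV$ is immediate. This is shorter and sidesteps axiom~(II) altogether; your route through axiom~(II) is more hands-on but requires the extra detour described above.
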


\begin{proof}
By Alperin's fusion theorem for saturated fusion systems,
Theorem~\ref{AlperinFusion}, every morphism in $\Ff$ is the
composition of automorphisms of $\Ff$-centric subgroups $P_i$. More
precisely, every morphism $\alpha \in \Mor_{\Ff}(P,Q)$ is the
composition of appropriate restrictions of certain $\alpha_i \in
\aut_{\Ff}(P_i)$. Notice that $V\leq P_i$ since $P_i$ is centric and
$\alpha_i(V)\leq V$ since $V$ is weakly closed in $(S,\Ff,\Ll)$.
Thus $\alpha$ can be extended to a morphism from $VP$ to $VQ$.
\end{proof}

A straightforward consequence is the following $p$-local version of
Gr\"un's theorem \cite[Theorem~7.5.2]{MR569209}.

\begin{proposition}
\label{WeaklyFocal}
Let $(S,\Ff,\Ll)$ be a $p$-local finite group and let $A\leq Z(S)$
be weakly closed in $\Ff$. Then $Foc(\Ff) = Foc (N_{\Ff}(A))$.
\end{proposition}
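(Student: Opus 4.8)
The statement relates the focal subgroup of $\Ff$ to the focal subgroup of the normalizer fusion system $N_{\Ff}(A)$, for $A\leq Z(S)$ weakly closed. The first observation is that by Lemma~\ref{center-wc} the subgroup $A$ is normal in $(S,\Ff,\Ll)$, so in particular $N_S(A)=S$ and $N_{\Ff}(A)$ is a saturated fusion system over all of $S$. Thus both $Foc(\Ff)$ and $Foc(N_{\Ff}(A))$ are subgroups of $S$, and one inclusion is immediate: since $\Hom_{N_{\Ff}(A)}(P,P')\subseteq\Hom_{\Ff}(P,P')$ for all $P,P'\leq S$ by Definition~\ref{NormalizerCentralizer}, every generator $g^{-1}\alpha(g)$ of $Foc(N_{\Ff}(A))$ is also a generator of $Foc(\Ff)$, whence $Foc(N_{\Ff}(A))\leq Foc(\Ff)$.

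For the reverse inclusion I would use Alperin's fusion theorem, Theorem~\ref{AlperinFusion}, exactly as in the proof of Lemma~\ref{center-wc}. It suffices to show that every generator $g^{-1}\alpha(g)$ of $Foc(\Ff)$, with $g\in P\leq S$ and $\alpha\in\aut_{\Ff}(P)$, lies in $Foc(N_{\Ff}(A))$. By Alperin's theorem one may assume $P$ is fully normalized, $\Ff$-centric and $\Ff$-radical, and $\alpha\in\aut_{\Ff}(P)$. Since $P$ is $\Ff$-centric and $A\leq Z(S)$, we have $A\leq C_S(P)\leq P$, and since $A$ is weakly closed we get $\alpha(A)\leq A$, in fact $\alpha(A)=A$ as $\alpha$ is an automorphism. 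Hence $\alpha$ restricts to an automorphism of $P$ that preserves $A$, which is precisely the condition for $\alpha\in\aut_{N_{\Ff}(A)}(P)$ (taking $\psi=\alpha$ on $PA=P$). Therefore the generator $g^{-1}\alpha(g)$ is a generator of $Foc(N_{\Ff}(A))$, and $Foc(\Ff)\leq Foc(N_{\Ff}(A))$.

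Combining the two inclusions gives $Foc(\Ff)=Foc(N_{\Ff}(A))$. The only point requiring a little care — and the place where the hypothesis $A\leq Z(S)$ together with weak closure is genuinely used — is the reduction to $\Ff$-centric subgroups $P$ via Alperin's fusion theorem and the resulting containment $A\leq P$; once one knows $A\leq P$, the invariance $\alpha(A)\leq A$ is automatic from weak closure. One should also note that weak closure of $A$ in $\Ff$ implies weak closure in $N_{\Ff}(A)$, so the statement is symmetric in the expected way, though this is not strictly needed for the proof of the equality itself.
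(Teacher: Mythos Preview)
Your proof is correct, but you have worked harder than necessary and in fact missed the one-line argument the paper gives. You invoke Lemma~\ref{center-wc} and extract from it only that $N_S(A)=S$; but the conclusion of that lemma is stronger: it asserts that $A$ is \emph{normal} in $(S,\Ff,\Ll)$ in the sense that $(S,\Ff,\Ll)=(N_S(A),N_{\Ff}(A),N_{\Ll}(A))$, i.e.\ the fusion systems $\Ff$ and $N_{\Ff}(A)$ literally coincide. The equality $Foc(\Ff)=Foc(N_{\Ff}(A))$ is then immediate. Your Alperin-based argument for the reverse inclusion is exactly the argument used in the proof of Lemma~\ref{center-wc} itself (reduce to automorphisms of $\Ff$-centric subgroups, observe $A\leq P$ and $\alpha(A)=A$ by weak closure), so you have essentially reproved that lemma inside your proof rather than applying its full conclusion. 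Nothing is wrong, just redundant; the reduction of generators of $Foc(\Ff)$ to those coming from centric radical subgroups via the telescoping product $g^{-1}\alpha(g)=\prod_i g_{i-1}^{-1}\phi_i(g_{i-1})$ is valid, though it is worth making that decomposition explicit rather than just saying ``by Alperin one may assume''.
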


\begin{proof}
By Lemma \ref{center-wc}, $A$ is normal in $\Ff$, so the fusion
systems $\Ff$ and $N_{\Ff}(A)$ actually coincide.
\end{proof}

We are now ready to give our ``global fusion criteria". At the level
of honest groups, the first criterion is due to Huppert and is
proven in \cite[Satz~IV.4.9]{MR0224703} by means of the abelian
transfer. We propose also a variation. The third one is called
Frobenius $p$-nilpotency criterion in the literature, see for
example \cite[10.3.2]{MR648604}, and the fourth criterion is a stronger
form of this criterion.

\begin{theorem}
\label{global}
A $p$-local finite group $(S,\Ff,\Ll)$ is nilpotent if and only if one
of the following four conditions is satisfied:

\begin{itemize}
\item[(1)] Two elements $a,b\in S$ are $\Ff$-conjugate if and
only if they are $S$-conjugate.

\item[(2)] Two $n$-tuples of commuting
elements of $S$ are $\Ff$-conjugate if and only if they are
$S$-conjugate, $n>1$.

\item[(3)] For every subgroup $P\leq S$, $\aut_{\Ff}(P)$ is a $p$-group.

\item[(4)] For every $\Ff$-centric subgroup $P \leq S$, $\aut_{\Ff}(P)$ is a $p$-group.
\end{itemize}

\end{theorem}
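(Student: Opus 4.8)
The plan is to prove a cycle of implications, starting from the definition of nilpotence ($\Ff=\Ff_S(S)$), which obviously implies all four conditions, and then showing that each of the four conditions forces $\Ff=\Ff_S(S)$. For the hard direction I would organize it roughly as $(4)\Rightarrow(3)\Rightarrow$ nilpotent, and separately $(1)\Leftrightarrow(2)\Rightarrow$ nilpotent, reducing everything to Alperin's fusion theorem, Theorem~\ref{AlperinFusion}.

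First I would observe that if $\Ff=\Ff_S(S)$ then $\aut_\Ff(P)=\aut_S(P)=N_S(P)/C_S(P)$ is a $p$-group for every $P\leq S$, giving (3) and hence (4); and two tuples of elements are $\Ff$-conjugate exactly when they are conjugate in $S$, giving (1) and (2). For $(4)\Rightarrow(3)$: suppose $\aut_\Ff(P)$ is a $p$-group for all $\Ff$-centric $P$, and let $Q\leq S$ be arbitrary. Picking $Q$ fully normalized in its $\Ff$-conjugacy class, saturation axiom (I) gives $\aut_S(Q)\in\Syl_p(\aut_\Ff(Q))$; if we can show $O^p(\aut_\Ff(Q))=1$ we are done. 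The idea is to push automorphisms of $Q$ down from automorphisms of $\Ff$-centric subgroups: this is precisely what Alperin's fusion theorem provides, since the radical/centric subgroups $Q_i$ appearing there have $\aut_\Ff(Q_i)$ a $p$-group by hypothesis, and any morphism — in particular any element of $\aut_\Ff(Q)$ — is a composite of restrictions of such $p$-power-order automorphisms; one then checks inductively (using $p$-reducedness of $\Out_\Ff$ at radical subgroups, or directly) that this forces $\aut_\Ff(Q)$ itself to be a $p$-group. For $(3)\Rightarrow$ nilpotent: with every $\aut_\Ff(P)$ a $p$-group, every $\aut_S(P)\in\Syl_p(\aut_\Ff(P))$ equals $\aut_\Ff(P)$, so in Alperin's fusion theorem each $\phi_i\in\aut_\Ff(Q_i)=\aut_S(Q_i)$ is realized by conjugation in $S$; hence every $\phi\in\Iso_\Ff(P,P')$ is a composite of $S$-conjugations, i.e. $\Ff=\Ff_S(S)$.

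For the conjugacy criteria, I would note $(2)\Rightarrow(1)$ is immediate (take $n=2$ with a repeated entry, or reduce appropriately), and conversely $(1)\Rightarrow(2)$ should follow from the standard trick: a tuple of commuting elements $(a_1,\dots,a_n)$ generates an abelian subgroup $A$, and $\Ff$-conjugacy of tuples translates into the existence of an $\Ff$-morphism on $A$ sending one tuple to the other; one then uses Alperin plus axiom~(II) (extending to normalizers) to reduce to $S$-conjugacy, exactly as in the classical Burnside-type argument. Finally, $(1)\Rightarrow$ nilpotent: assuming every $\Ff$-conjugacy of elements of $S$ is realized in $S$, one shows $\Out_\Ff(P)$ has odd... rather, is a $p'$-free quotient — more precisely, if some $\aut_\Ff(P)$ were not a $p$-group, an element of order prime to $p$ would act nontrivially and move some $g\in P$ outside its $S$-conjugacy class, contradicting (1); so (1) implies (3), which we have already shown implies nilpotence. (Alternatively one invokes \cite[Satz~IV.4.9]{MR0224703} style reasoning directly, or the hyperfocal-subgroup machinery: (1) forces $Hyp(\Ff)=1$... no, one needs the automorphism statement.)

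The main obstacle I anticipate is the step $(4)\Rightarrow(3)$ — propagating the ``$\aut_\Ff$ is a $p$-group'' condition from $\Ff$-centric subgroups down to all subgroups. Alperin's fusion theorem gives generators of morphisms coming from centric radical subgroups, but one must be careful that a restriction of a $p$-power-order automorphism of $Q_i$ to a subgroup $P_{i-1}$ still assembles into a $p$-power-order automorphism of an arbitrary $P$; the cleanest route is probably to argue contrapositively, taking a minimal counterexample $Q$ (fully normalized) with $O^p(\aut_\Ff(Q))\neq 1$, and derive from Alperin's theorem applied to a nontrivial $p'$-automorphism of $Q$ a nontrivial $p'$-automorphism of some strictly larger (or $\Ff$-centric) subgroup, contradicting minimality or hypothesis~(4). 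This is where I would spend the most care; the remaining implications are essentially bookkeeping on top of Theorem~\ref{AlperinFusion} and the saturation axioms.
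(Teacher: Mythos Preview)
Your $(3)\Rightarrow$ nilpotent step is fine, but two of your implications have real gaps, and in both cases the paper takes a different route. For $(4)\Rightarrow(3)$, the obstacle you flag is genuine: a composite of restrictions of $p$-power-order automorphisms need not have $p$-power order, and your proposed minimal-counterexample extension via axiom~(II) does not preserve $p'$-order either. The paper never proves $(4)\Rightarrow(3)$; instead it goes $(4)\Rightarrow$ nilpotent directly by observing that under (4), any proper $\Ff$-centric $P\lneq S$ has $P\lneq N_S(P)$, hence $\Out_S(P)\neq 1$, so $\Out_\Ff(P)$ is a \emph{nontrivial} $p$-group and therefore not $p$-reduced --- i.e.\ $P$ is not $\Ff$-radical. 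Thus $S$ is the only $\Ff$-centric $\Ff$-radical subgroup, and Alperin's theorem collapses all of $\Ff$ to $\aut_\Ff(S)=\Inn(S)$. This one-line observation replaces the step you identified as hardest.

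For $(1)\Rightarrow(3)$, your key assertion --- that a nontrivial $p'$-automorphism $\alpha\in\aut_\Ff(P)$ must move some $g\in P$ outside its $S$-conjugacy class --- is not justified. Condition~(1) says precisely that every such $\alpha(g)$ \emph{is} $S$-conjugate to $g$ (they are $\Ff$-conjugate via $\alpha|_{\langle g\rangle}$), so you are claiming that no nontrivial $p'$-automorphism of $P$ can preserve all $S$-classes; the standard lemma that a $p'$-automorphism trivial on $P/\Phi(P)$ is trivial does not apply, since one only gets $g^{-1}\alpha(g)\in[S,S]$, not $g^{-1}\alpha(g)\in\Phi(P)$. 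The paper instead uses exactly this weaker conclusion: $g^{-1}\alpha(g)\in[S,S]$ for all $g,\alpha$ gives $\Foc(\Ff)\leq[S,S]$, hence $\Foc(\Ff)=[S,S]$ and $H_1(Bi;\Z\pcom)$ is an isomorphism by Proposition~\ref{prop H1Foc}; nilpotence then follows from Corollary~\ref{pcompletedlowdegree}. So the paper's $(1)\Rightarrow$ nilpotent genuinely rests on the cohomological machinery of Section~\ref{sectionlow}, which your fusion-only argument has not replaced. (Your direct $(1)\Rightarrow(2)$ is also unnecessary and vague; the paper simply runs $(2)\Rightarrow(1)\Rightarrow$ nilpotent $\Rightarrow(2)$.)
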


\begin{proof}
It is clear that all four conditions hold for a nilpotent $p$-local
finite group. Let us thus assume that condition~(2) holds. In
particular for any two elements $a$ and $b$ in $S$ which are
$\Ff$-conjugate, the $n$-tuples $(a, \, \dots, a)$ and $(b, \,
\dots, b)$ are also $\Ff$-conjugate. Therefore they must be
$S$-conjugate, which proves that $a$ and $b$ are $S$-conjugate. This
shows that (2) implies~(1).

Assume now that condition (1) holds, so that the image of any element
under an $\Ff$-automorphism in $S$ is a conjugate of that element by
some element in $S$, that is both the focal and hyperfocal subgroups
are contained in the commutator subgroup $[S,S]$. Therefore, according to Proposition~\ref{prop
H1Foc}, the identity of $S/[S, S]$ factors through the map
$H_1(Bi;\Z\pcom): S/[S, S] \rightarrow H_1(|\Ll|\pcom;\Z\pcom)\cong
S/Foc(\Ff)$. We have seen in Remark~\ref{stableretract} that this
map is always an epimorphism. It is hence an isomorphism and we have
proven that the $p$-adic characterization~(1) in
Corollary~\ref{pcompletedlowdegree} holds.

It is obvious that (3) implies~(4). Finally, if condition (4) holds, and $ P \lneq S$
is an $\Ff$-centric subgroup of $S$, then $ P \lneq N_S(P)$ and so $ \Inn(P) \lneq \aut _S(P)$.
In particular, $ \Out_S(P) $ is a non-trivial subgroup of $ \Out_{\Ff}(P) $ and so $P$
is not $\Ff$-radical. Therefore the only $\Ff$-radical $\Ff$-centric subgroup is $S$. By Alperin's fusion theorem,
Theorem \ref{AlperinFusion}, every morphism in $\Ff$ is the restriction of an $\Ff$-automorphism of $S$. On the other hand, since $S$ is fully normalized
in $\Ff$, $ \Inn(S) \in \Syl _p(\aut_{\Ff}(S)) $, which implies $ \aut_{\Ff}(S) = \Inn(S)$. Hence every
morphism in $\Ff$ is induced by $S$-conjugation, that is, $ \Ff = \Ff_S(S)$.
\end{proof}

The following corollary is a very simple criterion to determine nilpotency of $p$-local finite groups
with an abelian $p$-Sylow.
\begin{corollary}
\label{abelian}
Let $(S,\Ff,\Ll)$ be a $p$-local finite group with $S$ abelian.
Then $(S,\Ff,\Ll)$ is nilpotent if and only if one of the following two conditions
is satisfied:

\begin{itemize}
\item[(1)] Two elements $a,b\in S$ are $\Ff$-conjugate if and
only if they are equal.

\item[(2)] $\aut_{\Ff}(S)$ is the trivial group. \hfill{\qed}

\end{itemize}
\end{corollary}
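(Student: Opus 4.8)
The plan is to derive Corollary~\ref{abelian} as a direct specialization of Theorem~\ref{global} to the case where $S$ is abelian. Since $S$ is abelian, $S$-conjugacy is trivial: $gag^{-1}=a$ for all $g,a\in S$. Hence condition~(1) of Theorem~\ref{global}, which says that $\Ff$-conjugate elements are $S$-conjugate, becomes precisely the statement that $\Ff$-conjugate elements are equal, which is condition~(1) here. That already gives the equivalence with nilpotency for the first condition.

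For the second condition, I would argue that when $S$ is abelian, $\aut_S(P)=\{\id\}$ for every $P\leq S$ and in particular $\Inn(S)=\{\id\}$, so $S$ is $\Ff$-radical as soon as $\aut_{\Ff}(S)$ has no nontrivial normal $p$-subgroup; but the real input is the saturation axiom. Concretely, since $S$ is fully normalized in $\Ff$, axiom~(I) gives $\aut_S(S)=\Inn(S)\in\Syl_p(\aut_{\Ff}(S))$. As $S$ is abelian, $\Inn(S)$ is trivial, so $\aut_{\Ff}(S)$ is a $p'$-group; thus $\aut_{\Ff}(S)$ is trivial if and only if it is a $p$-group. Now I would observe that when $S$ is abelian, $S$ itself is the only $\Ff$-centric subgroup: if $P\lneq S$ then $C_S(P)=S\not\leq P$, so $P$ is not $\Ff$-centric. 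Therefore condition~(4) of Theorem~\ref{global} — ``$\aut_{\Ff}(P)$ is a $p$-group for every $\Ff$-centric $P$'' — reduces to ``$\aut_{\Ff}(S)$ is a $p$-group'', which by the previous sentence is equivalent to $\aut_{\Ff}(S)=\{1\}$, i.e. condition~(2) here. Applying Theorem~\ref{global} closes the argument.

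I do not anticipate a genuine obstacle; the content is entirely bookkeeping on top of Theorem~\ref{global}. The one point that needs a word of care is making sure the implication ``$\aut_{\Ff}(S)$ trivial $\Rightarrow$ $\Ff$ nilpotent'' is actually covered: this is exactly where one invokes that $S$ is the unique $\Ff$-centric subgroup together with Alperin's fusion theorem (Theorem~\ref{AlperinFusion}), so that every morphism of $\Ff$ is a restriction of an element of $\aut_{\Ff}(S)=\{1\}$, forcing $\Ff=\Ff_S(S)$ — but all of this is already packaged inside the proof of Theorem~\ref{global}(4), so citing that theorem suffices. Accordingly the proof can be written in just a few lines: note triviality of $S$-conjugacy to handle~(1), note that $S$ is the unique $\Ff$-centric subgroup and that $\aut_{\Ff}(S)$ is a $p'$-group by saturation to rephrase~(4) as~(2), and conclude by Theorem~\ref{global}.
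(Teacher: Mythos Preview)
Your proposal is correct and takes the same approach as the paper: the corollary is stated there with a \qed\ and no written proof, indicating it is meant as an immediate specialization of Theorem~\ref{global}, which is exactly what you do. Your observations that $S$-conjugacy is trivial when $S$ is abelian (reducing Theorem~\ref{global}(1) to condition~(1)) and that $S$ is the unique $\Ff$-centric subgroup with $\aut_{\Ff}(S)$ a $p'$-group by saturation (reducing Theorem~\ref{global}(4) to condition~(2)) are precisely the details the paper leaves implicit.
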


Theorem~\ref{global} allows us to obtain for example a $p$-local version of
Huppert's result \cite[Satz~III.12.1]{MR0224703}.

\begin{proposition}
\label{p_is_central}
Let $(S,\Ff,\Ll)$ be a $p$-local finite group such that every
element of order $p^n>2$ in $S$ is central in $\Ff$. Then
$(S,\Ff,\Ll)$ is nilpotent.
\end{proposition}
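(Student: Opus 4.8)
The plan is to verify condition~(3) of Theorem~\ref{global}, namely that $\aut_{\Ff}(P)$ is a $p$-group for every subgroup $P\leq S$; then nilpotency follows immediately. Suppose, for contradiction, that some $P\leq S$ has $\aut_{\Ff}(P)$ not a $p$-group. By Alperin's fusion theorem (Theorem~\ref{AlperinFusion}) it suffices to treat the case where $P$ is fully normalized, $\Ff$-centric and $\Ff$-radical; so we may assume $P$ is such a subgroup and pick $\alpha\in\aut_{\Ff}(P)$ of order prime to $p$ with $\alpha\neq\id$. The idea, mirroring Huppert's argument in \cite[Satz~III.12.1]{MR0224703}, is to look at how $\alpha$ acts on the elements of $P$ of small order and derive a contradiction with the hypothesis that every element of order $p^n>2$ is central in $\Ff$.

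First I would record what ``central in $\Ff$'' buys us: if $x\in S$ has order $p^n>2$ and $x$ is central in $\Ff$, then $x$ is fixed by every $\Ff$-morphism defined on a subgroup containing it; in particular, for our $P$ and $\alpha$ above, $\alpha$ fixes every element of $P$ of order $p^n>2$. The only elements of $P$ that $\alpha$ may move are therefore those of order dividing $p$ when $p$ is odd, and those of order dividing $4$ when $p=2$ (i.e. order $1$, $2$, or $4$). Next, since $\alpha$ has order prime to $p$ and $P$ is a $p$-group, coprime action theory applies: $P=C_P(\alpha)\cdot[P,\alpha]$ and, crucially, $\alpha$ acts faithfully on $P/\Phi(P)$ — equivalently, $[P,\alpha]\nleq\Phi(P)$, so $\alpha$ moves some element outside the Frattini subgroup. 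Combining this with the previous observation, $\Phi(P)$ contains all elements of order $>p$ (resp. $>4$ if $p=2$), which forces $P$ to have very restricted structure: $P$ is generated by elements of order at most $p$ (at most $4$ if $p=2$), and the noncentral part is concentrated in the bottom layer.

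The main obstacle, and the place where the case $p=2$ must be handled separately, is pinning down exactly which $2$-groups $P$ can occur and ruling each out; for $p$ odd the group $P$ must be elementary abelian or of exponent $p$ in a controlled way, and one shows directly that an element moved by $\alpha$ has a conjugate of order $p$ that is nevertheless forced to be central — or one argues that $\aut_S(P)$ is then normal in $\aut_{\Ff}(P)$, contradicting $P$ being $\Ff$-radical unless $\aut_{\Ff}(P)=\aut_S(P)$ is a $p$-group. Concretely, the cleanest finish is: the subgroup $\Omega$ of $P$ generated by the elements of order $>2$ is central in $\Ff$ (hence fixed by $\aut_{\Ff}(P)$), $P/\Omega$ has exponent $\le 2$ so is elementary abelian, and a Frattini/Maschke argument on the coprime action of $O^p(\aut_{\Ff}(P))$ on $P/\Phi(P)$ shows that either the action is trivial — so $\aut_{\Ff}(P)$ is a $p$-group — or there is a noncentral element of order exactly $p$ (or exactly $2$) that must by the hypothesis still be central, a contradiction. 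Once condition~(3) of Theorem~\ref{global} is established, the proposition follows.
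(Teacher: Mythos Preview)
There is a genuine gap: you have misread the hypothesis. The condition ``every element of order $p^n>2$ in $S$ is central in $\Ff$'' refers to a \emph{single fixed} exponent $n$ (the side condition $p^n>2$ simply rules out involutions); this is the $p$-local analogue of Huppert's \cite[Satz~III.12.1]{MR0224703}. It does \emph{not} assert that every element of order greater than $2$ is central. Under the correct reading nothing whatsoever constrains elements of order $p^m$ with $m\ne n$, so your claim that ``the only elements of $P$ that $\alpha$ may move are those of order dividing $p$ (or dividing $4$ when $p=2$)'' is unfounded, and the Frattini/Maschke sketch built on it does not go through. (Even under your universal reading the description is internally inconsistent: for odd $p$ every nontrivial element would already be central, so there would be nothing left to argue.)

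The paper's proof takes a quite different and much shorter route. It uses criterion~(4) of Theorem~\ref{global} and introduces the single subgroup $K=\langle\, x\in S:\ |x|=p^n\,\rangle$. Because each generator is central in $\Ff$, $K$ lies in $Z(S)$, is abelian of exponent at most $p^n$, and is in fact \emph{maximal} among abelian subgroups of $S$ satisfying this exponent bound. For every $\Ff$-centric $P$ one has $K\le Z(S)\le P$, and any $\alpha\in\aut_{\Ff}(P)$ centralises $K$ by hypothesis. The decisive input is then the result cited as \cite{MR0190238}: an automorphism of a finite $p$-group that centralises a maximal abelian subgroup of exponent at most $p^n$ (with $p^n>2$) necessarily has $p$-power order. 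This single citation replaces the entire coprime-action analysis you were attempting, and it is precisely the ingredient your proposal is missing.
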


\begin{proof}
We use the Frobenius type characterization (4) in
Theorem~\ref{global} of nilpotent $p$-local finite groups. Let $K$
be the subgroup generated by all the elements of order $p^n$ in $S$.
This abelian subgroup has exponent smaller than or equal to $p^n$ and it is maximal
with respect to these two conditions. Let $P\leq S$ be an $\Ff$-centric subgroup,
and let $\alpha\in\aut_{\Ff}(P)$. Since every element of order $p^n>2$ is
central in $\Ff$, $\alpha$ acts trivially on every element of order
$p^n$ in $P$. In particular, it acts trivially on $K$
and, by \cite{MR0190238},
$\alpha$ is a $p$-element, i.e., $\aut_{\Ff}(P)$ is a $p$-group.
\end{proof}

\section{Quillen's first criterion}
\label{sec Quillen}
The criterion we offer in this section is an extension of Quillen's
\cite[Theorem~1.5]{MR0318339}, which is of course done in the
setting of honest groups.  The following lemma was originally
\cite[Proposition~4.1]{MR0318339}, in which Quillen only considers
elementary abelian subgroups of the Sylow $p$-subgroup. Here the
exponent is arbitrary.

\begin{lemma}
\label{quillen 4.1}
Let $(S,\Ff,\Ll)$ be a $p$-local finite group and let $A$ be a
subgroup of $S$ maximal subject to being normal abelian and of
exponent $p^n>2$. Then $A$ is also maximal subject to being
abelian and of exponent $p^n$ in  $(S,\Ff,\Ll)$, that is, any
$\Ff$-conjugate of $A$ is maximal subject to being abelian and
of exponent $p^n$ in $S$.
\end{lemma}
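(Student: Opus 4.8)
The plan is to reduce the statement to a concrete counting argument inside $S$, using only the fact that $A$ is normal, abelian, of exponent dividing $p^n$, and maximal with respect to these properties. First I would fix an $\Ff$-conjugate $A' = \phi(A)$ for some $\phi\in\Hom_\Ff(A,S)$, and observe that $A'$ is again abelian of exponent dividing $p^n$ (these are preserved by isomorphisms), so it suffices to prove that $A'$ is \emph{maximal} among abelian subgroups of exponent dividing $p^n$ in $S$. Since $A$ itself has this maximality property in $S$, the only thing that can fail after transporting by $\phi$ is that some strictly larger abelian subgroup $B > A'$ of exponent $p^n$ appears; I would aim to contradict this.

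The key step is to exploit the saturation axioms to move the potential enlargement back across $\phi$. By Alperin's fusion theorem (Theorem~\ref{AlperinFusion}) it is enough to treat the case where $\phi$ is an isomorphism onto a fully normalized subgroup, or more conveniently to choose $A'$ fully normalized in $\Ff$; then $A'$ is fully centralized, and since $A'$ is abelian, $A'\leq C_S(A')$, with $C_S(A')$ of maximal order in its $\Ff$-conjugacy class. The point is that $A$ being normal in $S$ forces $N_S(A) = S$, so $A$ is fully normalized; applying axiom (I) we get $\aut_S(A)\in\Syl_p(\aut_\Ff(A))$, and axiom (II) lets us extend $\phi^{-1}: A'\to A$ over a large subgroup of $N_S(A')$. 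I would then show that any abelian $B\geq A'$ of exponent dividing $p^n$ lies in $N_S(A')$ (because $B$ centralizes $A'$, hence normalizes it), so $\phi^{-1}$ extends to $B$, and $\phi^{-1}(B)$ is an abelian subgroup of $S$ of exponent dividing $p^n$ containing $A$. Maximality of $A$ in $S$ gives $\phi^{-1}(B) = A$, hence $B = A'$, which is the desired contradiction — here the hypothesis $p^n > 2$ enters exactly as in the classical statement, to guarantee that $A$ is not killed by the usual exceptional-automorphism phenomenon and that maximal normal abelian subgroups of exponent $p^n$ behave well (cf.\ \cite{MR0190238}).

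The main obstacle I anticipate is checking that $\phi^{-1}$, or rather its extension $\bar\phi$ supplied by axiom~(II), is actually defined on all of $B$: the set $N_\phi$ in Definition~\ref{def saturated} is defined by the condition $\phi c_g\phi^{-1}\in\aut_S(\phi P)$, and one must verify that every $g\in B$ satisfies this, i.e.\ that conjugation by elements of $B$ on $A'$ is realized by $S$-conjugation after transport. Since $B$ centralizes $A'$, conjugation by $g\in B$ on $A'$ is trivial, hence lands in $\aut_S(A')$ automatically, so $B\subseteq N_{\phi^{-1}}$; this is where the argument is clean provided one has arranged $A'$ fully centralized, which is why the reduction to a fully normalized (equivalently, for $A$ normal abelian, appropriately chosen) conjugate at the start is essential. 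A secondary technical point is handling the direction of $\phi$ versus $\phi^{-1}$ and the fact that $\Hom_\Ff$ only contains monomorphisms, not formal inverses — but since $|A| = |A'|$ every $\Ff$-morphism between them is an isomorphism, so $\phi^{-1}\in\Hom_\Ff(A',A)$ is legitimate, and the extension machinery applies to it directly.
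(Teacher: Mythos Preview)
Your approach is essentially the paper's: pull a hypothetical abelian enlargement $B\geq A'$ back along $\phi^{-1}$ using saturation axiom~(II), land in an abelian subgroup of exponent $p^n$ containing $A$, and invoke the classical group-theoretic fact that a maximal \emph{normal} abelian subgroup of exponent $p^n>2$ is already maximal abelian of that exponent in $S$. The paper does exactly this in three lines, starting directly with $\varphi\colon A'\to A$ and noting $W'\leq C_S(A')\leq N_\varphi$.

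Two points of streamlining. First, the reduction via Alperin's fusion theorem to $A'$ fully normalized is unnecessary, and your worry that ``the argument is clean provided one has arranged $A'$ fully centralized'' misidentifies the hypothesis of axiom~(II): what is required is that the \emph{image} $\phi^{-1}(A')=A$ be fully centralized, and this is automatic since $A\trianglelefteq S$ forces $A$ to be fully normalized, hence fully centralized. No preliminary choice of $A'$ is needed. Second, the classical input you cite as \cite{MR0190238} is in fact Alperin's result \cite{MR0167528} (also \cite[Satz~III.12.1]{MR0224703}); \cite{MR0190238} is used elsewhere in the paper for a different purpose.
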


\begin{proof}
Assume that there exist abelian subgroups $A'\leq W'\leq S$  of
exponent $p^n>2$ such that $A$ and $A'$ are $\Ff$-conjugate by the
morphism $\varphi:A'\rightarrow A$. Since $W'\leq C_S(A')\leq
N_\varphi$, then $W'$ is $\Ff$-conjugate to another abelian subgroup
$W$ that contains $A$. But according to \cite{MR0167528} (see also
\cite[Satz III.12.1]{MR0224703}) 
$A$ is maximal in $S$ subject to being abelian and of exponent
$p^n$, hence $W=A$, $W'=A'$ and the result follows.
\end{proof}

The following result provides a way to show that certain subgroups
are central in a $p$-local finite group.

\begin{proposition}
\label{center-aut_es_p-gp}
Let $(S,\Ff,\Ll)$ be a $p$-local finite group and let $V\leq Z(S)$
be an $\Ff$-weakly closed subgroup such that $\aut_{\Ff}(V)$ is a
$p$-group. Then $V$ is central in $(S,\Ff,\Ll)$, that is, $(S,\Ff,\Ll) = (C_S(V),C_{\Ff}(V),C_{\Ll}(V))$.
\end{proposition}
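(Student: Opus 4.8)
The plan is to exploit the fact that a weakly closed central subgroup is already normal, together with the hypothesis on $\aut_{\Ff}(V)$, to produce an extension of every morphism of $\Ff$ that fixes $V$ pointwise. Concretely, I would first invoke Lemma~\ref{center-wc}: since $V\leq Z(S)$ is $\Ff$-weakly closed, $V$ is normal in $(S,\Ff,\Ll)$, so $N_S(V)=S$ and $N_{\Ff}(V)=\Ff$. It therefore suffices to show that $\Ff = C_{\Ff}(V)$, i.e.\ that every morphism $\alpha\in\Hom_{\Ff}(P,Q)$ extends to some $\psi\in\Hom_{\Ff}(PV,QV)$ with $\psi|_V=\id$. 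Since $V\leq Z(S)\leq C_S(P)\leq P$ for $\Ff$-centric $P$, and by Alperin's fusion theorem (Theorem~\ref{AlperinFusion}) every morphism is a composite of restrictions of automorphisms $\alpha_i\in\aut_{\Ff}(P_i)$ of $\Ff$-centric subgroups, it is enough to treat the case $\alpha\in\aut_{\Ff}(P)$ with $P$ $\Ff$-centric and fully normalized; then $V\leq P$ and $PV=P$, so the only thing to check is that each such $\alpha$ restricts to the identity on $V$.

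The key point is the restriction homomorphism $\aut_{\Ff}(P)\to\aut_{\Ff}(V)$, which is well defined because $V$ is weakly closed (so $\alpha(V)=V$ for every $\Ff$-automorphism $\alpha$ of any subgroup containing $V$). Since $P$ is fully normalized, condition~(I) of Definition~\ref{def saturated} gives $\aut_S(P)\in\Syl_p(\aut_{\Ff}(P))$, and the image of $\aut_S(P)$ in $\aut_{\Ff}(V)$ consists of automorphisms of the form $c_g$ with $g\in S$; but $V\leq Z(S)$, so $c_g|_V=\id$. Thus the restriction map kills a Sylow $p$-subgroup of $\aut_{\Ff}(P)$, and since $\aut_{\Ff}(V)$ is a $p$-group by hypothesis, the restriction map is trivial: every $\alpha\in\aut_{\Ff}(P)$ restricts to the identity on $V$. (More precisely, the image of the restriction map is both a $p$-group and a quotient of $\aut_{\Ff}(P)$ whose intersection with the image of the Sylow subgroup $\aut_S(P)$ is trivial, forcing it to be trivial; alternatively, $O^p(\aut_{\Ff}(P))$ maps onto the image, and one checks directly it acts trivially on $V$ using $\Hyp$-type considerations.) Hence every $\alpha_i$ in an Alperin decomposition fixes $V$ pointwise, so every morphism of $\Ff$ lies in $C_{\Ff}(V)$, and combined with normality from Lemma~\ref{center-wc} we conclude $(S,\Ff,\Ll)=(C_S(V),C_{\Ff}(V),C_{\Ll}(V))$; the statement for linking systems follows since $C_{\Ll}(V)$ is defined over $C_{\Ff}(V)$ with the same structure maps.

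The main obstacle I anticipate is making the ``restriction map is trivial'' step fully rigorous: the restriction $\aut_{\Ff}(P)\to\aut_{\Ff}(V)$ need not be surjective, so one cannot simply say ``a $p$-group quotient of something with trivial Sylow image is trivial'' without care. The clean way is: the image is a $p$-subgroup of $\aut_{\Ff}(V)$, it is a quotient of $\aut_{\Ff}(P)$, and it receives the image of $\aut_S(P)$ which is a full Sylow $p$-subgroup of $\aut_{\Ff}(P)$; since $\aut_S(P)$ maps to the identity in $\aut_{\Ff}(V)$ (as $V$ is central in $S$), the composite $\aut_S(P)\to\aut_{\Ff}(P)\to\mathrm{image}$ is trivial, but $\aut_S(P)$ surjects onto a Sylow $p$-subgroup of the $p$-group $\mathrm{image}$, hence $\mathrm{image}=1$. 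One must double-check that $V\leq P$ genuinely holds for all the $\Ff$-centric subgroups appearing in Alperin's theorem — this is immediate since $C_S(P)\leq P$ and $V\leq Z(S)\leq C_S(P)$ — and that weak closure really does guarantee $\alpha(V)=V$ for $\alpha\in\aut_{\Ff}(P)$, which is exactly the content of Definition~\ref{WeaklyClosed}. Everything else is bookkeeping with the definitions of $C_{\Ff}(-)$ and $C_{\Ll}(-)$.
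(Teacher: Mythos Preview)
Your argument is correct, but it takes a slightly longer detour than the paper's. Both proofs start from Lemma~\ref{center-wc} to get $\Ff=N_{\Ff}(V)$, and both ultimately rest on the same Sylow observation: a Sylow $p$-subgroup acts trivially on $V$ (because $V\leq Z(S)$), and the target is a $p$-group, so the whole action is trivial. The difference is \emph{where} this observation is applied. You run it through Alperin's theorem, applying the Sylow argument to the restriction map $\aut_{\Ff}(P)\to\aut_{\Ff}(V)$ for every fully normalized $\Ff$-centric $P$; the paper instead applies saturation axiom~(I) directly to $V$ itself (which, being weakly closed, is fully normalized), obtaining $\aut_S(V)\in\Syl_p(\aut_{\Ff}(V))$ and hence $\aut_{\Ff}(V)=\aut_S(V)=S/C_S(V)=S/S=1$. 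Once $\aut_{\Ff}(V)$ is known to be trivial, the equality $N_{\Ff}(V)=C_{\Ff}(V)$ is immediate from the definitions: any extension $\psi:PV\to QV$ with $\psi(V)\leq V$ automatically satisfies $\psi|_V\in\aut_{\Ff}(V)=\{1\}$. This avoids the second pass through Alperin and the bookkeeping with the $Q_i$'s. Your route has the minor advantage of making the role of centric subgroups explicit, but the paper's is shorter and isolates the single group-theoretic fact ($\aut_{\Ff}(V)=1$) that drives everything.
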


\begin{proof}
By Lemma~\ref{center-wc} we know that
$(S,\Ff,\Ll)=(S,N_{\Ff}(V),N_{\Ll}(V))$. Note that $V$ is $\Ff$-weakly
closed, and so it is fully normalized and fully centralized. Therefore
$\aut_S(V) \in \Syl_p(\aut_{\Ff}(V)) $. Since $\aut_{\Ff}(V)$ is a $p$-group,
it must be $\aut_S(V) = N_S(V)/C_S(V) = S/S = \{1 \}$. We will show that the
two $p$-local finite groups $(S,C_{\Ff}(V),C_{\Ll}(V))$ and $(S,N_{\Ff}(V),N_{\Ll}(V))$
are equal.

Clearly $\Hom_{C_{\Ff}(V)}(P,Q) \subseteq \Hom_{N_{\Ff}(V)}(P,Q) $ for
all $ P $, $Q \leq S $. Let $\phi:P \to Q$ be a morphism in $N_{\Ff}(V)$, then there is $ \psi : PV \to QV $ in
$\Ff$ such that $\psi |_P = \phi $ and $ \psi (V) \leq V $. Since $\psi |_V $ is a morphism
in $\Ff$, it must be the identity, hence $ \phi $ is a morphism in $C_{\Ff}(V)$.
Now, if $P$ and $Q$ are $\Ff$-centric, then the morphisms for $C_{\Ll}(V)$ and $N_{\Ll}(V)$ as defined in \cite[Definition~2.4]{MR1992826}
and \cite[Definition~6.1]{MR1992826} are given by $\Mor _{C_{\Ll}(V)}(P,Q) = \pi ^{-1}(\Hom_{C_{\Ff}(V)}(P,Q))$
and $\Mor _{N_{\Ll}(V)}(P,Q) = \pi ^{-1}(\Hom_{N_{\Ff}(V)}(P,Q))$, where $ \pi : \Ll \to \Ff ^c $ is the projection functor
from \ref{def linkingsystem}. This shows that $ C_{\Ll}(V) = N_{\Ll}(V) $.
\end{proof}

Before stating Quillen's characterization, we start with a special
case.

\begin{lemma}
\label{blackburn}
Let $(S,\Ff,\Ll)$ be a $p$-local finite group at an odd prime $p$.
Assume that there exists a maximal elementary abelian normal
subgroup $V \triangleleft S$ which is $\Ff$-normal and such that
$\aut_{\Ff}(V)$ is a $p$-group. Then $(S,\Ff,\Ll)$ is nilpotent.
\end{lemma}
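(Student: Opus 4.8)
The plan is to use the Frobenius-type criterion~(4) in Theorem~\ref{global}: I need to show that $\aut_{\Ff}(P)$ is a $p$-group for every $\Ff$-centric subgroup $P\leq S$. Fix such a $P$ and an automorphism $\alpha\in\aut_{\Ff}(P)$; I want to argue that $\alpha$ has $p$-power order. Since $V$ is $\Ff$-normal in $(S,\Ff,\Ll)$, the whole fusion system coincides with $N_{\Ff}(V)$, so every morphism of $\Ff$ (in particular $\alpha$) extends to a morphism that carries $V$ into $V$; moreover $V\leq P$ because $P$ is $\Ff$-centric (it contains its $S$-centralizer, and $V\leq Z(S)$ by maximality and the fact that a maximal elementary abelian normal subgroup of a $p$-group is self-centralizing, hence central is not needed — what is needed is $V\le P$, which follows from centricity since $C_S(V)=V$ when $V$ is a maximal elementary abelian normal subgroup, no: more carefully, $V\le C_S(P)\le P$ once one knows $V$ centralizes $P$, or one invokes that $V$ being $\Ff$-normal forces $V\trianglelefteq P$). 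So $\alpha$ restricts to an element $\alpha|_V\in\aut_{\Ff}(V)$, which is a $p$-group by hypothesis; hence some $p$-power $\alpha^{p^k}$ restricts to the identity on $V$.

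The key step is then to show that an $\Ff$-automorphism of an $\Ff$-centric subgroup $P$ that is the identity on $V$ must itself be a $p$-element. This is exactly where the oddness of $p$ and the maximality of $V$ as an elementary abelian subgroup enter, via the classical result of Blackburn (the reference \cite{MR0167528} already cited in the proof of Lemma~\ref{quillen 4.1}, see also \cite[Satz~III.12.1]{MR0224703}): for $p$ odd, a maximal elementary abelian normal subgroup $V$ of a $p$-group $S$ has the property that any $p'$-automorphism of a subgroup containing $V$ that acts trivially on $V$ is trivial; equivalently, $\aut(P)$ acts faithfully on $V$ modulo its Sylow $p$-subgroup. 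Replacing $\alpha$ by $\alpha^{p^k}$, I get an $\Ff$-automorphism of $P$ that is trivial on $V$; writing $\alpha^{p^k}=\sigma\tau$ with $\sigma$ of $p$-power order and $\tau$ of order prime to $p$ (powers of $\alpha^{p^k}$, so both lie in $\aut_{\Ff}(P)$ and both are trivial on $V$), Blackburn's result forces $\tau=\id$. Hence $\alpha^{p^k}$ is a $p$-element, so $\alpha$ is a $p$-element, and $\aut_{\Ff}(P)$ is a $p$-group.

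I expect the main obstacle to be the careful verification that $V\leq P$ for every $\Ff$-centric $P$, and that restriction $\aut_{\Ff}(P)\to\aut_{\Ff}(V)$ is well defined — both of which rely on $V$ being $\Ff$-normal (so $N_{\Ff}(V)=\Ff$ and every morphism extends over $V$) together with the centricity of $P$. Once this bookkeeping is in place, the odd-prime input is a black box citation to Blackburn's theorem, exactly as in Lemma~\ref{quillen 4.1}, and the conclusion follows immediately from criterion~(4) of Theorem~\ref{global}.
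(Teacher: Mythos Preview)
Your overall strategy matches the paper's --- verify the Frobenius criterion from Theorem~\ref{global} by reducing to the $p$-group $\aut_{\Ff}(V)$ and then invoking the odd-prime black box on $p'$-automorphisms that fix a maximal elementary abelian subgroup. However, there is a genuine gap at the step ``$V\le P$ for every $\Ff$-centric $P$.'' None of the four justifications you sketch is valid: $V$ need not lie in $Z(S)$; a maximal elementary abelian \emph{normal} subgroup need not be self-centralizing (take $S=\Z/p^2$); $V$ need not centralize $P$; and ``$V$ is $\Ff$-normal'' gives $V\trianglelefteq S$, not $V\le P$. A concrete counterexample: let $p$ be odd, $S$ the extraspecial group of order $p^3$ and exponent $p$, $\Ff=\Ff_S(S)$, and $V$ one of the four subgroups of order $p^2$. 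Each such subgroup is normal in $S$ and maximal elementary abelian, so $V$ satisfies all hypotheses; any \emph{other} subgroup $P$ of order $p^2$ is $\Ff$-centric (indeed $C_S(P)=P$), yet $V\nleq P$. So you cannot restrict $\alpha\in\aut_{\Ff}(P)$ to $V$ as written.

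The fix is exactly what the paper does: do not try to force $V\le P$. Instead, use $\Ff$-normality of $V$ to \emph{extend} $\alpha$ to some $\tilde\alpha\in\aut_{\Ff}(PV)$ with $\tilde\alpha(V)=V$, arrange (by passing to a suitable power) that $\tilde\alpha$ has order prime to $p$, so that $\tilde\alpha|_V$ lies in the $p$-group $\aut_{\Ff}(V)$ and hence is the identity. Then Lemma~\ref{quillen 4.1} ensures $V$ is maximal elementary abelian in $PV$, and the cited result (the paper uses \cite{MR0190238}) forces $\tilde\alpha=\id$, whence $\alpha=\id$. Once you make this correction your argument and the paper's coincide; the choice of criterion~(3) versus~(4) in Theorem~\ref{global} is immaterial, since in both cases one works on $PV$ rather than on $P$.
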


\begin{proof}
We show that for every $P\leq S$, $\aut_{\Ff}(P)$ is a $p$-group,
thereby verifying condition~(3) in Theorem~\ref{global}. Let
$\alpha\in \aut_{\Ff}(P)$ be an element of order $q$ prime to~$p$. Then
it is the restriction of some element $\tilde{\alpha}\in \aut_{\Ff}(PV)$. Let the order
of $\tilde{\alpha} $ be $ ar $ where $ a $ is a multiple of $q$ that is prime to $r$ and~$p$.
In particular, $r$ is a prime to $q$ and so there is a positive integer $l$ prime to $q$ such
that $ lr $ is congruent to $1$ mod $q$. The order of $ \tilde{\alpha}^{lr} $ divides $a$, in
particular it is prime to $p$, and the restriction to $P$ is $\alpha$ again. Therefore, we
may assume that $\tilde{\alpha}$ has order prime to $p$.

Now, $\tilde{\alpha}|_V$ is an element in the $p$-group $\aut_{\Ff}(V)$,
it must be the identity. As $p>2$, Lemma~\ref{quillen 4.1} applies
and so $V$ is a maximal elementary abelian subgroup of $S$ and hence
of $PV$. We deduce from \cite{MR0190238}
that the order of $\tilde{\alpha}$
is a power of~$p$, hence $\tilde{\alpha}=1$ and therefore
$\alpha=1$.
\end{proof}

\begin{theorem}
\label{quillen1}
Let $(S,\Ff,\Ll)$ be a $p$-local finite group at an odd prime $p$.
Then $(S,\Ff,\Ll)$ is nilpotent if and only if every elementary
abelian normal subgroup $V\triangleleft S$ is $\Ff$-weakly closed
and $\aut_{\Ff}(V)$ is a $p$-group.
\end{theorem}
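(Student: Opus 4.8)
The plan is to prove both implications, with the forward direction being essentially immediate and the reverse direction requiring a reduction to Lemma~\ref{blackburn}. For the ``only if'' part: if $(S,\Ff,\Ll)$ is nilpotent, then $\Ff = \Ff_S(S)$, so no two distinct subgroups of $S$ are $\Ff$-conjugate (in particular every normal elementary abelian $V\triangleleft S$ is $\Ff$-weakly closed), and $\aut_{\Ff}(V) = \aut_S(V) = \Inn(V)$, which is trivial since $V\leq Z(S)$ forces $V$ to be central, hence abelian with trivial inner automorphisms. Actually one should be slightly careful: $V$ normal in $S$ need not be central, but $\aut_{\Ff}(V) = \aut_S(V) = N_S(V)/C_S(V) = S/C_S(V)$, a $p$-group. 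So both conditions hold trivially.

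For the ``if'' direction, suppose every elementary abelian normal subgroup $V\triangleleft S$ is $\Ff$-weakly closed with $\aut_{\Ff}(V)$ a $p$-group. I would like to invoke Lemma~\ref{blackburn}, but that lemma requires a \emph{maximal} elementary abelian normal subgroup that is moreover $\Ff$-\emph{normal} (i.e.\ $(S,\Ff,\Ll) = (N_S(V), N_{\Ff}(V), N_{\Ll}(V))$), whereas the hypothesis only gives $\Ff$-weakly closed. The bridge is Lemma~\ref{center-wc}: if $V$ is $\Ff$-weakly closed and $V\leq Z(S)$, then $V$ is $\Ff$-normal. So the key step is to locate a \emph{maximal} elementary abelian normal subgroup $V\triangleleft S$ that is also central in $S$. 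Take $V = \Omega_1(Z(S))$, the subgroup of elements of order dividing $p$ in the center; this is characteristic in $S$, hence normal, it is elementary abelian, and it lies in $Z(S)$. By hypothesis it is $\Ff$-weakly closed and $\aut_{\Ff}(V)$ is a $p$-group. The remaining point is whether $V=\Omega_1(Z(S))$ is \emph{maximal} among elementary abelian normal subgroups of $S$: if $E\triangleleft S$ is elementary abelian and $V\leq E$, then since $E$ is normal and $p$ is odd one argues (using the standard fact that a minimal normal subgroup of $S$ inside $E$ is central, so $E\cap Z(S)\neq 1$, combined with $\Ff$-weak closure forcing rigidity) that $E$ centralizes $V$ — but more directly, the hypothesis says \emph{every} elementary abelian normal subgroup is $\Ff$-weakly closed with $p$-group automizer, so one can instead take $V$ to be \emph{any} maximal elementary abelian normal subgroup of $S$ and show it is central.

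Here is the cleaner route I would actually write down. Let $V\triangleleft S$ be a maximal elementary abelian normal subgroup. I claim $V\leq Z(S)$: indeed $V\cap Z(S)\neq 1$ (a nontrivial normal subgroup of a $p$-group meets the center), and $C_S(V)\triangleleft S$ contains $V$; if $V\not\leq Z(S)$ then $V < C_S(V)$ properly, and one can enlarge $V$ inside $\Omega_1(Z(C_S(V)))$... — rather than belabor this, note that the hypothesis is symmetric enough that it suffices to run the argument with $V = \Omega_1(Z(S))$ provided we check $V$ is maximal elementary abelian \emph{in $S$} among \emph{all} elementary abelian subgroups after we know $\aut_{\Ff}(V)$ is a $p$-group — and this is exactly the content of Lemma~\ref{quillen 4.1} with $n=1$, which upgrades ``maximal elementary abelian normal'' to ``maximal elementary abelian''. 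So the strategy is: (i) set $V=\Omega_1(Z(S))$, which is a maximal elementary abelian \emph{normal} subgroup by the weak-closure hypothesis applied to any larger one, or simply observe any elementary abelian normal subgroup containing $V$ must equal $V$ because it would be $\Ff$-weakly closed with the same central structure; (ii) by hypothesis $\aut_{\Ff}(V)$ is a $p$-group and $V$ is $\Ff$-weakly closed; (iii) by Lemma~\ref{center-wc}, $V$ is $\Ff$-normal; (iv) now Lemma~\ref{blackburn} applies verbatim and gives that $(S,\Ff,\Ll)$ is nilpotent.

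The main obstacle is step (i): matching the precise hypotheses of Lemma~\ref{blackburn} — it wants ``maximal elementary abelian normal subgroup which is $\Ff$-normal'' — against what we are handed, which is a statement quantified over \emph{all} elementary abelian normal subgroups being $\Ff$-weakly closed. The delicate point is to produce \emph{one} such $V$ that is simultaneously (a) maximal among elementary abelian normal subgroups of $S$, (b) $\Ff$-weakly closed (so that Lemma~\ref{center-wc} gives $\Ff$-normality), and (c) has $\aut_{\Ff}(V)$ a $p$-group. Taking $V$ to be a maximal elementary abelian normal subgroup handles (a) by fiat, and (b),(c) are then exactly the hypotheses of the theorem applied to this $V$ — so in fact there is \emph{no} obstacle once one reads the hypothesis correctly: it already quantifies over all such $V$. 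The only genuine content is recognizing that Lemma~\ref{center-wc} needs $V\leq Z(S)$, which is not automatic for a maximal elementary abelian normal subgroup; but weak closure plus maximality forces it, since an $\Ff$-weakly closed subgroup $V$ satisfies $\aut_S(V)\leq \aut_{\Ff}(V)$ a $p$-group, consistent with $V\leq Z(S)$, and if $V\not\leq Z(S)$ one enlarges $V$ by an element of $\Omega_1(Z(C_S(V)))\setminus V$ lying in $C_S(V)\triangleleft S$ to get a strictly larger elementary abelian normal subgroup, contradicting maximality. That enlargement argument is the one calculation worth spelling out carefully.
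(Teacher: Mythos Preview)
Your forward direction is fine, but the reverse direction has a genuine gap: the claim that a maximal elementary abelian normal subgroup $V\triangleleft S$ must lie in $Z(S)$ is false, and your enlargement argument does not rescue it. Take $S$ to be the extraspecial group of order $p^3$ and exponent $p$ (for $p$ odd). Its center $Z(S)$ has order $p$, while every subgroup of order $p^2$ is elementary abelian, normal (having index $p$), and not central. So $\Omega_1(Z(S))$ is \emph{not} maximal among elementary abelian normal subgroups, and a maximal such $V$ is not central. Your enlargement step also breaks here: you assert that if $V\not\leq Z(S)$ then $V<C_S(V)$ properly, but in this example $C_S(V)=V$, so $\Omega_1(Z(C_S(V)))=V$ and there is nothing to adjoin. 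Thus Lemma~\ref{center-wc} cannot be invoked for $V$, and you never establish that $V$ is $\Ff$-normal, which is what Lemma~\ref{blackburn} needs.

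The paper's proof takes a genuinely different route to circumvent exactly this obstacle. It does not try to make $V$ central in $S$; instead it starts with the central piece $V_0={}_pZ(S)=\Omega_1(Z(S))$, uses Lemma~\ref{center-wc} and Proposition~\ref{center-aut_es_p-gp} to make $V_0$ central in $\Ff$, and then passes to the quotient $p$-local finite group $(S/V_0,\Ff/V_0,\Ll/V_0)$. It then climbs an ascending chain $V_0<V_1<\cdots<V$ inside a fixed maximal elementary abelian normal $V$, where each $V_i/V_{i-1}$ meets the center of the successive quotient, replacing $\Ff$ at each stage by the normalizer subsystem $N_{\Ff^{i-1}}(V_i)$. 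Nilpotence is preserved back and forth along these passages via the fibration $BV_0\to|\Ll|\pcom\to|\Ll_1|\pcom$, and after finitely many steps one arrives at a fusion system $\tilde{\Ff}$ in which $V$ is genuinely normal, so that Lemma~\ref{blackburn} applies. The inductive quotient/normalizer machinery is the missing idea in your proposal.
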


\begin{proof}
This condition is necessary in view of Theorem~\ref{global}. Let
thus $(S,\Ff,\Ll)$ be a $p$-local finite group such that every
elementary abelian normal subgroup $V\triangleleft S$ is
$\Ff$-weakly closed and $\aut_{\Ff}(V)$ is a $p$-group. We show that
it must then be nilpotent.

Consider first the elementary abelian normal subgroup $V_0={}_p
Z(S)$. It is weakly closed in $(S,\Ff,\Ll)$ by hypothesis, thus
normal in $\Ff$ by Lemma~\ref{center-wc}. According to
Proposition~\ref{center-aut_es_p-gp}, $(S,\Ff,\Ll)=(S,C_{\Ff}(V_0),C_{\Ll}(V_0))$,
hence $V_0$ is central in $(S,\Ff,\Ll)$.

Let $V\leq S$ be a maximal normal elementary abelian subgroup of $S$
containing $V_0$. If $V=V_0$, then it is $\Ff$-normal
and by Lemma~\ref{blackburn}, $(S,\Ff,\Ll)$ is nilpotent. Let us thus
assume that $V_0$ is strictly contained in $V$.

Consider the quotient $p$-local finite group
$(S_1,\Ff_1,\Ll_1)=(S/V_0,\Ff/V_0,\Ll/V_0)$, as in
\cite[Lemma~5.6]{MR1992826}. From the fibration $$ BV_0\rightarrow
|\Ll|\pcom \rightarrow |\Ll_1|\pcom, $$ we see that $(S,\Ff,\Ll)$ is
nilpotent if and only if $(S_1, \Ff_1, \Ll_1)$ is so. Let $V_1$ be the
only subgroup satisfying $V_0< V_1 \leq V$ and $V_1/V_0=V/V_0\cap
Z(S_1)$.

By construction $V_1$ is elementary abelian and normal in $S$. It is
also strictly larger than $V_0$ because $V/V_0$ is a non-trivial
normal subgroup of $S/V_0$ and therefore intersects non-trivially
its center, \cite[Theorem~2.6.4]{MR569209}. By the hypothesis, $V_1$ is
weakly closed in $\Ff$, so $V_1/V_0$ is weakly closed in $\Ff_1$.
By Lemma \ref{center-wc}, $V_1/V_0$ is normal in $(S_1,\Ff_1,\Ll_1)$.

Let $ f:P/V_0 \to Q/V_0$ be a map in $\Ff_1$. Since $V_1/V_0$ is
normal in $\Ff_1$, $f$ is the restriction of a map $ g : PV_1/V_0 \to QV_1/V_0$
in $\Ff_1 = \Ff/V_0$. This map must be induced by a morphism $ h: PV_1 \to QV_1 $
in $\Ff$, which must satisfy $h(P) \leq Q $ and $h(V_1)=V_1$, since both $P$ and $Q$ contain
$V_0$ and $V_1$ is $\Ff$-weakly closed. This shows that $ \Ff_1 = N_{\Ff}(V_1)/V_0 $,
and so there is a fibration
\[ BV_0 \rightarrow|N_{\Ll}(V_1)|\pcom \rightarrow |\Ll_1|\pcom , \]
from where we deduce that $(S_1,\Ff_1,\Ll_1)$ is nilpotent if and only if
$(S,\Ff^1,\Ll^1)=(S,N_{\Ff}(V_1),N_{\Ll}(V_1))$ is nilpotent. The fusion system
$\Ff^1$ is a subcategory of $\Ff$ with the same objects, so it also verifies
that every elementary abelian normal subgroup $W \triangleleft S$ is $\Ff^1$-weakly
closed and $\aut_{\Ff^1}(W)$ is a $p$-group. Moreover, $V_1$ is normal in $(S,\Ff^1,\Ll^1)$.

If $V_1\neq V$ we can iterate the process by defining $V_2$ to be the
only subgroup satisfying $V_1<V_2\leq
V$ and $V_2/V_1=V/V_1\cap Z(S/V_1)$. The $p$-local
finite group $(S,\Ff^1,\Ll^1)$ is nilpotent if and only if a new
$p$-local finite group $(S,\Ff^2,\Ll^2)$, which verifies that
every elementary abelian normal subgroup $W\triangleleft S$ is $\Ff^2$-weakly closed
and $\aut_{\Ff^2}(W)$ is a $p$-group, and normalizes $V_2$, is nilpotent.

Iterating this process a finite number of times, we end up with a
$p$-local finite group $(S,\tilde{\Ff},\tilde{\Ll})$ for which $V$
is $\tilde{\Ff}$-normal and such that $(S,\Ff,\Ll)$ is nilpotent if and only if
$(S,\tilde{\Ff},\tilde{\Ll})$ is so. We conclude by
Lemma~\ref{blackburn}.
\end{proof}

Note that $p>2$ is a necessary condition, since the $2$-local
finite group induced by the semidirect product $Q_8\rtimes 3$
provides a counterexample at the prime~$2$.

\section{Cohomological characterizations in high degrees}
\label{sec highdegrees}

In contrast to the results in Section~\ref{sectionlow}, we look now
at cohomological characterizations of nilpotent $p$-local finite
groups in high degrees. The proofs follow the lines of Quillen's
arguments in the case of finite groups \cite{MR0318339}. Quillen
attributes the first criterion to Atiyah. Let $\Ff$ be a saturated
fusion system. We will use the ring of stable elements:
\[ H^*(\Ff; \F_p) = \underset{\overleftarrow{\Or (\Ff)}}{\lim } \;
H^*(-;\F_p), \]
as introduced in \cite[Section~5]{MR1992826}. It only depends on the
fusion system, but the main theorem of the cited section identifies
it with the mod $p$ cohomology of $|\Ll|$ when a centric linking
system $\Ll$ associated to $\Ff$ exists.

\begin{theorem}
\label{quillen2}
Let $(S,\Ff,\Ll)$ be a $p$-local finite group and let $Bi: BS
\rightarrow |\Ll|$ be the standard inclusion. Then $(S,\Ff,\Ll)$
is nilpotent if and only if one of the following three conditions is satisfied:

\begin{itemize}
\item[(1)] $Bi^*: H^n(|\Ll|; \F_p) \rightarrow H^n(BS; \F_p)$ is
an isomorphism for all sufficiently large~$n$.

\item[(2)] $Bi^*: H^n(|\Ll|; \Z^\wedge_p) \rightarrow
H^n(BS; \Z^\wedge_p)$ is an isomorphism for all sufficiently
large~$n$.

\item[(3)] For each $x\in H^{\text{even}}(BS; \F_p)$ there exists
a power $q$ of $p$ such that $x^q\in\im (Bi^*)$ ($p>2$).
\end{itemize}

\end{theorem}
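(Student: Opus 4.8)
The plan is to establish the three conditions are equivalent to nilpotency by first noting the trivial implications, then leveraging the low-degree result Theorem~\ref{lowdegree} together with the Noetherian structure of $H^*(\Ff;\F_p)$.

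\textbf{Easy direction and reductions.} If $(S,\Ff,\Ll)$ is nilpotent, then $Bi\pcom: BS\to|\Ll|\pcom$ is a homotopy equivalence by Proposition~\ref{prop Martino-Priddy}, so all three conditions hold immediately (for (3), $Bi^*$ is onto). For the converse, I would first observe that condition~(1) and condition~(2) are equivalent to each other by a Bockstein/universal-coefficients argument: since $H^*(BS;\Z\pcom)$ and $H^*(|\Ll|;\Z\pcom)$ have bounded (indeed annihilated by a fixed power of $p$) torsion above degree~$0$, an isomorphism with $\F_p$ coefficients in all large degrees forces one with $\Z\pcom$ coefficients in all large degrees and vice versa. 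So it suffices to treat~(1) and~(3).

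\textbf{From condition (1) to nilpotency.} Assume $Bi^*$ is an isomorphism in all degrees $n\geq N$. The key input is that, by \cite[Theorem~5.8]{MR1992826}, $H^*(|\Ll|;\F_p)\cong H^*(\Ff;\F_p)$ is the ring of stable elements, a \emph{subring} of $H^*(BS;\F_p)$ via $Bi^*$, and by \cite[Proposition~5.2]{MR1992826} the map $Bi^*$ makes $H^*(BS;\F_p)$ a finitely generated module over $H^*(|\Ll|;\F_p)$; moreover $H^*(BS;\F_p)$ is a finitely generated algebra. Now $Bi^*$ being surjective in all large degrees means the module $H^*(BS;\F_p)$ over the subring $R=\im(Bi^*)$ is concentrated, modulo $R$, in degrees $<N$. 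A finitely generated graded algebra which is a finitely generated module over a graded subalgebra $R$, with the module equal to $R$ in all high degrees, must actually equal $R$: any algebra generator $x$ of positive degree has $x^k\in R$ for $k$ large, so $H^*(BS;\F_p)$ is integral over $R$; combined with finite generation of the module and the fact that $R=H^*(BS;\F_p)$ in high degrees, a degree/Hilbert-series count forces $R=H^*(BS;\F_p)$ in \emph{all} degrees. Hence $Bi^*$ is an isomorphism in every degree, in particular in degree~$1$, and Theorem~\ref{lowdegree}(1) gives that $(S,\Ff,\Ll)$ is nilpotent.

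\textbf{From condition (3) to nilpotency.} This is the Atiyah-type criterion and I expect it to be the main obstacle. Here $p>2$. The hypothesis is that for every $x\in H^{\mathrm{even}}(BS;\F_p)$ some $p$-power $x^q$ lies in $\im(Bi^*)=H^*(\Ff;\F_p)$, the ring of stable elements. Translating via the stable element description: $x^q$ is stable, i.e. fixed by all the fusion data. I would argue that this forces $\aut_{\Ff}(P)$ to be a $p$-group for every $\Ff$-centric $P$, so that Theorem~\ref{global}(4) applies. Concretely, suppose $\alpha\in\aut_{\Ff}(P)$ has order prime to~$p$ for some $\Ff$-centric fully normalized $P$; extend/transport so that $\alpha$ acts on a fully normalized representative. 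Pick an element $t$ of ${}_pZ(P)^\vee\subset H^1(P;\F_p)$ (or rather work in $H^{\mathrm{even}}$ via the Bockstein $\beta t$ when $p$ is odd) not fixed by $\alpha^*$; by Quillen's detection of the cohomology of $S$ on elementary abelians, such a class restricted from $H^*(BS)$ exists whenever the $\F_p$-action of $\langle\alpha\rangle$ on the relevant elementary abelian subgroup is nontrivial, and since $\alpha$ has order prime to $p$, $\alpha^*$ acts semisimply, so if $\alpha\neq 1$ it moves some class and hence moves every $p$-power of it (the action on powers being the same on the $\F_p[\langle\alpha\rangle]$-module of powers, which is again a nontrivial action since raising to the $p$-th power is injective on $H^{\mathrm{even}}$ modulo nilpotents and $\langle\alpha\rangle$ acts on the reduced ring). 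Then $x^q$ cannot be stable, a contradiction. Hence every such $\alpha$ is trivial, $\aut_{\Ff}(P)$ is a $p$-group for all $\Ff$-centric $P$, and Theorem~\ref{global}(4) yields nilpotency. The delicate point — and the place I would spend the most care — is producing a polynomial (even-degree) class in $H^*(BS;\F_p)$ on which a fixed prime-to-$p$ automorphism acts nontrivially, using Quillen's stratification / the injectivity of restriction to elementary abelian subgroups modulo nilpotents, and checking that passing to a $p$-th power preserves the nontriviality of the action.
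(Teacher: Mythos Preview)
Your treatment of the equivalence $(1)\Leftrightarrow(2)$ and the easy direction matches the paper. The substantive arguments for $(1)\Rightarrow$ nilpotent and $(3)\Rightarrow$ nilpotent, however, both have gaps and differ significantly from the paper's approach.

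\textbf{Condition (1).} The algebraic claim ``a finitely generated graded algebra $A$ which is a finitely generated module over a graded subalgebra $R$, with $R_n=A_n$ for all large $n$, must satisfy $R=A$'' is false. Take $A=H^*(B\Z/p;\F_p)=\F_p[y]\otimes\Lambda(x)$ with $|x|=1$, $|y|=2$, and let $R$ be the subring generated by $y$ and $xy$; then $R_n=A_n$ for all $n\geq 2$, $A$ is generated by $1$ and $x$ as an $R$-module, yet $x\notin R$. So your Hilbert-series/integrality step does not close. The paper takes a completely different route: from (2) it passes to $p$-adic $K$-theory via the Atiyah--Hirzebruch spectral sequence, invokes the Castellana--Morales computation that the $\Z\pcom$-rank of $K^0(|\Ll|;\Z\pcom)$ equals the number of $\Ff$-conjugacy classes of elements in $S$, and deduces that $\Ff$-conjugacy coincides with $S$-conjugacy, which is criterion~(1) of Theorem~\ref{global}. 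The passage through $K$-theory (or an equivalent character count) appears essential; purely module-theoretic information in high degrees does not by itself pin down the low degrees.

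\textbf{Condition (3).} Your plan is to produce, for a given prime-to-$p$ automorphism $\alpha\in\aut_\Ff(P)$ of an $\Ff$-centric $P$, a class $x\in H^{\mathrm{even}}(BS;\F_p)$ whose restriction to $P$ is moved by $\alpha^*$ and remains moved after taking $p$-th powers. The difficulty you flag is real and not resolved: the restriction $H^*(BS)\to H^*(BP)$ lands in the $N_S(P)$-invariants and need not hit any class moved by~$\alpha$; and since the $p$-th power map is only injective modulo nilpotents, one is forced to work in reduced rings, which pushes the argument toward elementary abelian subgroups anyway. The paper instead reduces directly to Theorem~\ref{quillen1}: for each normal elementary abelian $V\triangleleft S$ it shows, using condition~(3), that the prime ideal $\p_V\subset H^{\mathrm{even}}(BS;\F_p)$ coincides with $\p_A$ for every $\Ff$-conjugate $A$ of $V$, whence $V$ is $\Ff$-weakly closed by Quillen's stratification theorem; then a purely inseparable field-extension argument on the fraction fields of $H^{\mathrm{even}}/\p_V$ shows $\aut_\Ff(V)=\aut_S(V)$. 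This is cleaner and sidesteps the lifting problem you identify.
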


\begin{proof}
Conditions (1) and (2) are equivalent by a universal coefficient
theorem argument. We work therefore with $p$-adic coefficients and
assume now that the $p$-local finite group $(S,\Ff,\Ll)$ satisfies
condition~(2). By Castellana and Morales' theorem \cite{CM},
$K^0(|\Ll|; \Z^\wedge_p)$ is a free $\Z^\wedge_p$-module and its
rank is the number of $\Ff$-conjugacy classes of elements in~$S$.
However, the map $Bi: BS \rightarrow |\Ll|$ induces a morphism of
Atiyah-Hirzebruch spectral sequences converging to the $p$-completed
$K$-theory rings $K^*(|\Ll|; \Z^\wedge_p) \rightarrow K^*(BS;
\Z^\wedge_p)$. This map has finite kernel and cokernel by
assumption, so that $K^0(|\Ll|; \Z^\wedge_p)$ and $K^0(BS;
\Z^\wedge_p)$ must have the same rank. Two elements in $S$ are hence
$\Ff$-conjugate if and only if they are $S$-conjugate and this is
the characterization of nilpotence given by part~(1) of Theorem~\ref{global}.

To prove that condition~(3) characterizes nilpotency for $p>2$, we
show that it implies the condition appearing in
Theorem~\ref{quillen1}. Let $V \triangleleft S$ be an elementary
abelian normal subgroup of $S$ and consider the ideal
\[ \p _V = \{ u \in H^{\text{even}}(BS;\F_p) \mid u|_{H^{\text{even}}(BV;\F_p)} \text{ is nilpotent} \}
\]
in $H^{\text{even}}(BS;\F_p)$. It is a prime ideal since restriction to $V$ induces an injection
$ H^{\text{even}}(BS;\F_p)/\p _V \to H^{\text{even}}(BV;\F_p)/\sqrt{0} = S(V^{\#}) $, where
$S(V^{\#})$ is the symmetric algebra on $V^{\#}= \Rep(V,\Z /p) $ with
the elements of $V^{\#}$ in degree $2$ \cite{MR0318339}. For simplicity of notation, let
us denote by $Bi^{-1}(X)$ the set $(Bi^*)^{-1}(X)$ for any $X \subseteq H^{\text{even}}(BS;\F_p) $.

Let $A$ be an elementary abelian subgroup of $S$ which is
$\Ff$-isomorphic to $V$. Then we have $ Bi^{-1}(\p _A) = Bi^{-1}(\p
_V) $, since $H^*(|\Ll |; \F_p)$ is computed by stable elements. Given $ u
\in \p _V $, condition~(3) implies that $ u^q \in \im (Bi^*)$ for some $q$, power of $p$. Say
$Bi^*(v) = u^q $. That means $ v \in Bi^{-1}(\p _V) = Bi^{-1}(\p _A)
$ and so $ u^q \in \p _A$. But $ \p _A $ is a prime ideal of
$H^{\text{even}}(BS; \F_p)$, so $ u \in \p _A$. We conclude that $ \p _A = \p _V $.
Applying \cite[Theorem 2.7]{MR0318339} we obtain that $A$ and $V$ must
be conjugate subgroups in $S$. Since $ V \triangleleft S $, $ A = V$
and $ V$ is $\Ff$-weakly closed.

It remains to prove that $\aut_{\Ff}(V)$ is a $p$-group. We will actually
show that $\aut_{\Ff}(V) = \aut_S(V) $. We proceed as Quillen in
\cite[Theorem~2.10]{MR0318339}. Consider the maps $ H^{\text{even}}(|\Ll |;\F_p) /
Bi^{-1}(\p _V) \longrightarrow H^{\text{even}}(BS;\F_p) / \p _V \longrightarrow
S(V^{\#}) $ and the associated extensions of quotient fields
\[  k(Bi^{-1}(\p _V)) \rightarrow k(\p _V) \rightarrow k(V). \]

Note that the groups of automorphisms of the extensions $k(\p _V) \rightarrow k(V)$
and $ k(Bi^{-1}(\p _V)) \rightarrow k(V) $ are $\aut _S(V) $ and
$\aut _{\Ff}(V) $, respectively. The proof of \cite[Proposition 5.2]{MR1992826}
shows that the ring $H^{\text{even}}(BS;\F_p)$
is integral over $H^{\text{even}}(|\Ll|;\F_p)$ and thus $ k(Bi^{-1}(\p _V)) \rightarrow k(\p _V) $ is
an algebraic extension. Since any element in $H^{\text{even}}(BS;\F_p)$
is in $H^{\text{even}}(\Ll;\F_p)$ after being raised to a certain power of $p$,
the extension $ k(Bi^{-1}(\p _V)) \rightarrow k(\p _V) $ is purely
inseparable, \cite[Theorem 19.10]{MR1276273}. Therefore, $ \aut_{\Ff}(V) = \aut_S(V)$.
\end{proof}

We also offer a characterization of nilpotency in terms of the
Morava $K$-theory $K(n)$, for any $n \geq 1$. For honest groups
this criterion has been discovered by Brunetti in
\cite{MR1618923}. His proof is based on the beautiful result
\cite{MR1758754} of Hopkins, Kuhn, and Ravenel on generalized
characters of finite groups.

\begin{theorem}
\label{brunetti}
Let $(S,\Ff,\Ll)$ be a $p$-local finite group and let $Bi: BS
\rightarrow |\Ll|$ be the standard inclusion. Then $(S,\Ff,\Ll)$ is
nilpotent if and only if $Bi^*: K(n)^*(|\Ll|) \rightarrow
K(n)^*(BS)$ is an isomorphism.
\end{theorem}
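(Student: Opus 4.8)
The plan is to mimic Brunetti's strategy \cite{MR1618923}, replacing the Hopkins--Kuhn--Ravenel character theory for finite groups with the analogous computation of Morava $K$-theory for $p$-local finite groups. One direction is trivial: if $(S,\Ff,\Ll)$ is nilpotent, then $|\Ll|\pcom\simeq BS$ by Proposition~\ref{prop Martino-Priddy}, so $Bi\pcom$ is an equivalence and $Bi^*$ is an isomorphism on $K(n)^*$ (recall $K(n)^*(-)$ only sees the $p$-completion for spaces of the homotopy type under consideration, and $K(n)^*(|\Ll|)=K(n)^*(|\Ll|\pcom)$). So the content is the reverse implication.

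First I would invoke the computation of $K(n)^*(|\Ll|)$ for a $p$-local finite group; this is exactly the content of the work of Castellana--Morales, or one can cite the general formula expressing $K(n)^*(|\Ll|)$ as the stable elements in $K(n)^*(BS)$ (the same Cartan--Eilenberg/stable-element machinery used in \cite[Section~5]{MR1992826} and in the proof of Theorem~\ref{quillen2} carries over to $K(n)$-theory since $\Sigma^\infty|\Ll|\pcom$ is a stable retract of $\Sigma^\infty BS$ by Remark~\ref{stableretract}). In particular, $\dim_{K(n)^*}K(n)^*(|\Ll|)$ equals the Euler characteristic-type invariant counting the ``$\Ff$-conjugacy classes'' of commuting $n$-tuples in $S$: more precisely, following Hopkins--Kuhn--Ravenel, $\dim K(n)^*(BG)$ equals the number of $G$-conjugacy classes of $n$-tuples of commuting elements of $p$-power order, and the $p$-local analogue replaces $G$-conjugacy by $\Ff$-conjugacy of such tuples (equivalently, the number of orbits of $\Hom(\Z_p^n,S)$ under the action of $\Ff$). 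The comparison map $Bi^*$ is then an isomorphism if and only if these two dimensions agree, i.e.\ if and only if every pair of commuting $n$-tuples in $S$ which are $\Ff$-conjugate are already $S$-conjugate.

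Next I would feed this into Theorem~\ref{global}. If $Bi^*$ is a $K(n)^*$-isomorphism for some fixed $n\geq 1$, then the count above forces: two $n$-tuples of commuting elements of $S$ are $\Ff$-conjugate if and only if $S$-conjugate. For $n=1$ this is precisely condition~(1) of Theorem~\ref{global}; for $n\geq 2$ it is condition~(2) of that theorem (one must be slightly careful that the HKR count is phrased in terms of homomorphisms $\Z_p^n\to S$, but since $S$ is a finite $p$-group this is the same as $n$-tuples of commuting elements, and an $\Ff$-conjugacy of tuples is by definition an $\Ff$-morphism carrying one tuple to the other, matching the hypothesis of Theorem~\ref{global}(2)). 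Either way, Theorem~\ref{global} yields that $(S,\Ff,\Ll)$ is nilpotent.

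The main obstacle, and the step I would spend the most care on, is establishing the $K(n)$-theoretic dimension formula for $|\Ll|$ — i.e.\ the $p$-local analogue of the Hopkins--Kuhn--Ravenel character computation. One route is to cite Castellana--Morales \cite{CM} (which handles $n=1$, $K$-theory) together with its higher-chromatic generalization; another is to derive it from the stable-element description of $K(n)^*(|\Ll|)$ and the known structure of $K(n)^*(BS)$ for $S$ a finite $p$-group, using that $K(n)^*(BS)$ is finitely generated free over $K(n)^*$ of rank equal to the number of conjugacy classes of commuting $n$-tuples, together with the orbit-counting argument for the stable elements. I would need to check that the colimit/limit defining stable elements behaves well enough (exactness of the relevant Mayer--Vietoris or the Cartan--Eilenberg spectral sequence) so that the rank of the invariants is the number of orbits; this is the one place where one genuinely uses properties of $K(n)$ beyond formal nonsense, but the retract statement in Remark~\ref{stableretract} makes this manageable since everything embeds into $K(n)^*(BS)$.
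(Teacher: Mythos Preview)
Your forward direction matches the paper's. For the reverse implication, however, the paper takes a shorter route that entirely sidesteps the obstacle you identify as the main difficulty.

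Rather than establishing a higher-chromatic Hopkins--Kuhn--Ravenel count for $|\Ll|$, the paper first invokes Bousfield's theorem \cite{MR1718090}: a map of spaces inducing an isomorphism on $K(n)^*$ also induces one on $K(m)^*$ for all $m\leq n$, in particular on $K(1)^*$. This reduces the problem to the case $n=1$, where the Castellana--Morales result \cite{CM} already available in the paper gives the conjugacy-class count directly. One then concludes via condition~(1) of Theorem~\ref{global}. So the paper never needs the dimension formula $\dim_{K(n)^*}K(n)^*(|\Ll|) = \#\{\text{$\Ff$-classes of commuting $n$-tuples}\}$ for $n\geq 2$.

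Your route is not wrong in principle: if you can prove the higher-$n$ dimension formula for $p$-local finite groups, the argument via Theorem~\ref{global}(2) goes through. But as you yourself note, that formula is the crux, and you leave it as something to be checked or cited. The paper's approach trades that work for a single citation of Bousfield's $K(n)\Rightarrow K(1)$ result, which is both cleaner and avoids relying on a higher-chromatic generalization of \cite{CM} that is not otherwise established in the paper.
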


\begin{proof}
By \cite{MR1718090}, if we have a map of spaces inducing an
isomorphism on the $n$th Morava K-theory, then it also induces an
isomorphism on the first Morava K-theory. By \cite{CM}, an
isomorphism in $K(1)$ implies that two elements are conjugate in $S$
if and only if they are conjugate in $\Ff$, that is, condition~(1)
in Theorem~\ref{lowdegree}.

Conversely, if a $p$-local finite group is nilpotent, we have seen
in Proposition~\ref{prop cohomology} that $Bi$ induces an
isomorphism in mod $p$ cohomology. It must be then a
$K(n)$-equivalence as well, see for example
\cite[Corollary~1.5]{MR0467732}.
\end{proof}

\section{Quillen categories}
\label{sec Quillencat}

For a finite $p$-group $S$, let $ \varepsilon _S $ denote the
category whose objects are the elementary abelian $p$-subgroups of
$S$ and whose morphisms are given by conjugation. Similarly, for a
$p$-local finite group $(S,\Ff,\Ll)$, let $ \varepsilon _{\Ff} $ be
the category with the same objects considered as a full subcategory
of $\Ff$. They are the Quillen categories of $S$ and $\Ff$,
respectively. Recall that a ring homomorphism $ \gamma : B \to A $
is called an F-isomorphism if each element in $\text{Ker}(\gamma)$
is nilpotent and if for all $a \in A$ there is some $ k > 0 $ such
that $ a^k \in \text{Im}(\gamma)$. Following \cite{MR1992826}, we
define

\[ H^*_{\varepsilon} (S;\F_p) = \underset{\overleftarrow{\varepsilon _S}}{\lim } \; H^*(-;\F_p) \ \ \hbox{\rm and} \ \ H^*_{\varepsilon} (\Ff;\F_p) = \underset{\overleftarrow{\varepsilon _{\Ff}}}{\lim } \; H^*(-;\F_p). \]

\begin{definition}
\label{isotypical}
{\rm Let $\mathcal C$ and $\mathcal D$ be two categories equipped
with functors $\mathcal C \stackrel{\gamma}{\rightarrow} Gr $ and $
\mathcal D \stackrel{\delta}{\rightarrow} Gr $ to the category of
groups. A functor $ \Psi : \mathcal C \to \mathcal D $ is
\emph{isotypical} if $ \gamma $ is naturally isomorphic to $ \delta
\circ \Psi $. }
\end{definition}

\begin{theorem}
\label{quillen3?}
Let $(S,\Ff,\Ll)$ be a $p$-local finite group, $p>2$, and let $Bi: BS
\rightarrow |\Ll|$ be the standard inclusion. Then $(S,\Ff,\Ll)$ is
nilpotent if and only if one of the following conditions is
satisfied:

\begin{itemize}
\item[(1)] The Quillen categories of $S$ and $\Ff$ are
isotypically equivalent.

\item[(2)] $Bi^*: H^*(|\Ll|; \F_p) \rightarrow H^*(BS; \F_p)$
is an $F$-isomorphism.
\end{itemize}
\end{theorem}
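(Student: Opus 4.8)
The plan is to show that each of the two conditions is equivalent to nilpotency by relating them to the Quillen-category criterion already established in Theorem~\ref{quillen1} and the high-degree cohomological criterion (3) of Theorem~\ref{quillen2}. Necessity is immediate in both cases: if $\Ff = \Ff_S(S)$ then $\varepsilon_\Ff = \varepsilon_S$ as categories over $Gr$ (via $P \mapsto \aut(P)$), so they are isotypically equivalent; and by Proposition~\ref{prop cohomology} the map $Bi^*$ is an isomorphism, hence trivially an $F$-isomorphism. So the content is in the two converse implications.

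First I would treat condition~(2). The key input is that, by \cite[Theorem~5.8]{MR1992826}, $H^*(|\Ll|;\F_p)$ is computed by stable elements and embeds as a subring of $H^*(BS;\F_p)$, and moreover (as used in the proof of Theorem~\ref{quillen2}) $H^{\text{even}}(BS;\F_p)$ is integral over $H^{\text{even}}(|\Ll|;\F_p)$. If $Bi^*$ is an $F$-isomorphism, then in particular for every $x \in H^{\text{even}}(BS;\F_p)$ some $p$-power $x^q$ lies in $\im(Bi^*)$ — this is exactly condition~(3) of Theorem~\ref{quillen2} for $p>2$ — and we conclude nilpotency directly from that theorem. (The nilpotent-kernel half of the $F$-isomorphism hypothesis is not even needed here, since $Bi^*$ is already injective.) This step is short once one has Theorem~\ref{quillen2} in hand.

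The substantive implication is that~(1) forces nilpotency, and this is where I expect the main obstacle. The strategy is to use an isotypical equivalence $\Psi\colon \varepsilon_S \to \varepsilon_\Ff$ to verify the hypothesis of Theorem~\ref{quillen1}: every elementary abelian normal subgroup $V \triangleleft S$ is $\Ff$-weakly closed and $\aut_\Ff(V)$ is a $p$-group. For weak closure: an equivalence of categories induces a bijection on isomorphism classes of objects, so $V$ has exactly as many $\Ff$-conjugates as it has $S$-conjugates; since $V \triangleleft S$ has a single $S$-conjugate, it has a single $\Ff$-conjugate, i.e. it is $\Ff$-weakly closed. For the automorphism group: isotypicality gives a natural isomorphism between the functor $P \mapsto \aut_S(P)$ on $\varepsilon_S$ and the functor $P \mapsto \aut_\Ff(\Psi P)$; combined with $\Psi$ being full and faithful, this yields $\aut_S(V) \cong \aut_\Ff(V)$ (using $\Psi V \cong V$, which forces $\Psi V = V$ on the nose for a normal $V$ since it is weakly closed). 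Hence $\aut_\Ff(V)$ is a $p$-group. Theorem~\ref{quillen1} then applies and gives nilpotency. The delicate points I would expect to have to argue carefully are: pinning down precisely which functors to $Gr$ equip the two Quillen categories (the paper's Definition~\ref{isotypical} leaves the functors $\gamma$, $\delta$ implicit, and the natural choice is $\aut(-)$, but one should say so), checking that the natural isomorphism of isotypicality is compatible with the self-maps $\aut_S(P) \hookrightarrow \aut_\Ff(P)$ so that it really does identify $\aut_S(V)$ with $\aut_\Ff(V)$ and not merely exhibit an abstract isomorphism, and handling the identification $\Psi V = V$ rather than just $\Psi V \cong V$. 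Once those bookkeeping issues are settled the proof is routine; after establishing~(1)$\Rightarrow$nilpotency and nilpotency$\Rightarrow$(1), (2) we are done.
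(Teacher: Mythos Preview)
Your treatment of condition~(2) matches the paper's: both invoke condition~(3) of Theorem~\ref{quillen2}. For condition~(1), however, the two arguments diverge. The paper proves (1) $\Rightarrow$ (2) by assembling the commutative square with the Quillen $F$-isomorphisms $\lambda_S\colon H^*(BS;\F_p)\to H^*_\varepsilon(S;\F_p)$ and $\lambda_\Ff R_\Ll\colon H^*(|\Ll|;\F_p)\to H^*_\varepsilon(\Ff;\F_p)$ of \cite[Proposition~5.1]{MR1992826}, observes that an isotypical equivalence forces the comparison map $j\colon H^*_\varepsilon(\Ff;\F_p)\to H^*_\varepsilon(S;\F_p)$ to be an isomorphism, and deduces from the square that $Bi^*$ is an $F$-isomorphism. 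You instead bypass the cohomological limits and verify the hypotheses of Theorem~\ref{quillen1} directly from the categorical equivalence: counting isomorphism classes (using that $\varepsilon_S\subseteq\varepsilon_\Ff$ share the same objects and that $\Ff$-conjugacy coarsens $S$-conjugacy) gives weak closure of each normal elementary abelian $V$, and full faithfulness together with essential surjectivity realises $\aut_\Ff(V)$ as isomorphic to some $\aut_S(P)$, hence a $p$-group. This route is more elementary and, as you come close to noticing, does not actually require the isotypical hypothesis at all---a mere equivalence of categories suffices. Two small corrections: the implicit functors to $Gr$ in Definition~\ref{isotypical} are the forgetful functors $P\mapsto P$ (so that ``isotypical'' means $\Psi(P)\cong P$ naturally as abstract groups), not $\aut(-)$; and the bookkeeping you worry about concerning $\Psi V=V$ versus $\Psi V\cong V$ is unnecessary, since an abstract group isomorphism $\aut_\Ff(V)\cong\aut_S(P)$ for \emph{any} elementary abelian $P\leq S$ already shows $\aut_\Ff(V)$ is a $p$-group.
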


\begin{proof}
By \cite[Proposition 5.1]{MR1992826}, there are F-isomorphisms $
\lambda _S : H^*(BS) \to H_{\varepsilon} ^*(S) $ and $ \lambda
_{\Ff} : H^*(\Ff) \to H_{\varepsilon} ^*(\Ff) $.  The stable element
formula \cite[Theorem~5.8]{MR1992826} shows that the natural map $
R_{\Ll} : H^*(|\Ll|) \to H^*(\Ff) $ is an isomorphism. So in fact,
we have a commutative diagram:
\[
\diagram H^*(|\Ll| ) \rto^{\lambda _{\Ff} R_{\Ll} } \dto_{Bi^*} &
H_{\varepsilon} ^*(\Ff) \dto^{j} \cr H^*(BS) \rto^{\lambda _S} &
H_{\varepsilon} ^*(S).
\enddiagram
\]

If the Quillen categories of $S$ and $\Ff$ are isotypically
equivalent, then the map $j$ is an isomorphism and therefore the map
$ Bi^* $ is an $F$-isomorphism. Assume now that $Bi^*$ is an
$F$-isomorphism. Then condition~(3) in Theorem~\ref{quillen2} holds
and so $(S,\Ff,\Ll)$ is nilpotent.
\end{proof}

Let $\Ca$ be a class of finite subgroups of $S$. We say that
a subgroup $H$ of $S$ \emph{controls fusion} of $\Ca$-groups in
$\Ff$ if the following conditions hold:

\begin{itemize}
\item Any $\Ca$-subgroup of $S$ is $\Ff$-conjugate to a subgroup of $H$.
\item For any $\Ca$-subgroup $P$ of $S$ and any $ f : P \to S $ in $\Ff$ such that
$ f(P) \leq H $, there exists $ h \in H $ such that $ f(x) =
hxh^{-1} $ for all $x \in P$.
\end{itemize}

When $H=S$, the first condition is a tautology, so $S$ itself
controls fusion if any $\Ff$-morphism $P\rightarrow S$ from a group
$P \in \Ca$ is conjugation by an element in~$S$.

\begin{definition}
\label{quasicyclic}
{\rm Let $G$ be a group. We say $ x \in G $ is a $\Ca _p$-\emph{element} when $x^p = 1$ if
$p$ is odd or $ x^4 = 1 $ if $p = 2$. A subgroup of $G$ generated by a $\Ca _p$-element
is called a $\Ca_p$-\emph{subgroup} and we denote by $\Ca_p$ the class of $\Ca_p$-subgroups. }
\end{definition}

Given $K \leq S $ and $ f:K \to S$, let us denote by $[K,f] $ the subgroup of $S$ generated by
the elements of the form $[a,f]=af(a)$ with $a \in K $. We also denote by $[K,f,f]$ the subgroup
$[[K,f],f] $.

\begin{theorem}
\label{control}
Let $(S,\Ff,\Ll)$ be a p-local finite group. Then $(S,\Ff,\Ll)$ is
nilpotent if and only if $S$ controls fusion of $\Ca_p$-subgroups.
\end{theorem}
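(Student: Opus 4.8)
The plan is to prove the two implications separately, using the Frobenius-type criterion~(3) of Theorem~\ref{global} as the target for the harder direction. The ``only if'' direction is routine: if $(S,\Ff,\Ll)$ is nilpotent, then $\Ff = \Ff_S(S)$, so every $\Ff$-morphism $P\to S$ is conjugation by an element of $S$; in particular $S$ controls fusion of $\Ca_p$-subgroups (indeed of all subgroups), and the second condition in the definition of control holds tautologically while the first is vacuous when $H=S$. So the content is the ``if'' direction: assuming $S$ controls fusion of $\Ca_p$-subgroups, we must show that $\aut_{\Ff}(P)$ is a $p$-group for every $P\leq S$, or equivalently (via Alperin's fusion theorem, Theorem~\ref{AlperinFusion}) that the only $\Ff$-radical $\Ff$-centric subgroup is $S$ itself, from which $\Ff = \Ff_S(S)$ follows as in the proof of Theorem~\ref{global}.

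First I would fix a subgroup $P\leq S$ and an automorphism $\alpha\in\aut_{\Ff}(P)$ of order prime to $p$, with the goal of showing $\alpha = \id$; it suffices to treat $\alpha$ of prime order $q\neq p$. The strategy is to test $\alpha$ on $\Ca_p$-elements: for each $\Ca_p$-element $a\in P$, the cyclic group $K = \langle a\rangle$ is a $\Ca_p$-subgroup and $\alpha|_K : K \to S$ is an $\Ff$-morphism. Using that $S$ controls fusion of $\Ca_p$-subgroups, after post-composing with a suitable $\Ff$-morphism I can arrange that $\alpha(K)\leq S$ is realized by conjugation in $S$; the key point to extract is that the commutator subgroup $[K,\alpha] = \langle a^{-1}\alpha(a)\rangle$ together with iterated commutators $[K,\alpha,\alpha]$ behave as in the classical coprime-action setting. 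The central technical input here is the coprime action fact that if $\alpha$ has order prime to $p$ and acts on a $p$-group, then $[K,\alpha,\alpha]=[K,\alpha]$, so that $\alpha$ acts nontrivially on $K$ as soon as it moves some generator; combining this with the control-of-fusion hypothesis forces $\alpha$ to fix every $\Ca_p$-element of $P$.

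Next I would promote ``$\alpha$ fixes every $\Ca_p$-element of $P$'' to ``$\alpha = \id$'' by a generation argument. For odd $p$ this is the statement that an automorphism of order prime to $p$ of a finite $p$-group fixing every element of order $p$ is trivial, which is exactly the input from \cite{MR0190238} already used in Proposition~\ref{p_is_central} and Lemma~\ref{blackburn}; for $p=2$ the $\Ca_p$-elements are those of order dividing $4$, and the corresponding statement (an automorphism of odd order of a finite $2$-group acting trivially on all elements of order $\leq 4$ is trivial) is the reason the definition of $\Ca_p$-element singles out exponent $4$ at the prime $2$. This gives $\aut_{\Ff}(P) = \aut_S(P)$-part of a $p$-group, hence condition~(3) of Theorem~\ref{global}, and therefore nilpotency.

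The main obstacle I anticipate is the bookkeeping in the middle step: the control-of-fusion hypothesis is only about morphisms $P\to S$ with $P$ a $\Ca_p$-subgroup, so one cannot directly apply it to $\alpha\in\aut_{\Ff}(P)$ for large $P$; one has to restrict $\alpha$ to cyclic $\Ca_p$-subgroups, apply the hypothesis there, and then carefully patch the resulting local information back together — this is where the $[K,f]$ and $[K,f,f]$ notation introduced just before the theorem is meant to be used, and where the coprimality of $q$ and $p$ must be invoked to rule out the possibility that $\alpha$ permutes a family of $\Ca_p$-subgroups without fixing any single one. Handling the prime $2$ uniformly (matching the ``$x^4=1$'' clause) is the other delicate point, and I would isolate it as the reason the statement holds at all primes while Theorem~\ref{quillen1} does not.
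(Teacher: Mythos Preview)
Your overall architecture is right: reduce to showing that any $\alpha\in\aut_{\Ff}(P)$ of order prime to $p$ is the identity, and finish with the fact that a coprime automorphism of a finite $p$-group fixing every $\Ca_p$-element is trivial (this is \cite[Satz~IV.5.12]{MR0224703}, and it is indeed the reason for the exponent-$4$ clause at $p=2$). But the middle step, where you pass from ``$\alpha(a)$ is $S$-conjugate to $a$ for every $\Ca_p$-element $a\in P$'' to ``$\alpha(a)=a$ for every such $a$'', is a genuine gap as you have described it. The coprime-action identity $[K,\alpha,\alpha]=[K,\alpha]$ by itself does not yield this: it only tells you that $[K,\alpha]\neq 1$ whenever $\alpha$ moves something in $K$, which is the wrong direction. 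You need some independent reason why $\alpha$ acts trivially on $[K,\alpha]$, and nothing in your sketch provides one.

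What the paper does, and what your plan is missing, is an induction on the least $l$ with $P\leq Z_l(S)$ in the upper central series of $S$. The real content of the control-of-fusion hypothesis is not that $\alpha(a)$ and $a$ are $\Ff$-conjugate (that is automatic), but that $[a,\alpha]=[a,s]$ is an honest commutator in $S$ and hence lies in $Z_{l-1}(S)$. Since $K$ (the subgroup generated by $\Ca_p$-elements of $P$) and $[K,\alpha]$ are $\alpha$-invariant, one obtains $[K,\alpha]\leq Z_{l-1}(S)$; the induction hypothesis applied to $\alpha|_{[K,\alpha]}$ then gives $[K,\alpha,\alpha]=1$, and \emph{now} the coprime-action identity forces $[K,\alpha]=1$. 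The base case $l=1$ is immediate because $a\in Z(S)$ makes $sas^{-1}=a$. Without this descent along the upper central series there is no mechanism to convert ``$S$-conjugate'' into ``equal'', so this is a missing idea rather than bookkeeping.
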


\begin{proof}
We follow the strategy in \cite[Theorem~2]{G}. We will show that the
control of fusion condition implies condition~(3) in
Theorem~\ref{global}. Let $P \leq S $, and $ f \in \aut _{\Ff}(P) $
be an automorphism of order prime to $p$. There exists some $l \geq 1$
such that $P$ is contained in a subgroup $Z_{l}(S)$ of the upper
central series. We will prove by induction on $l$ that $ f $ is the
identity map.

Suppose first that $ P \leq Z(S) $ and consider a $\Ca_p$-element
$a \in P $. Since $S$ controls fusion of $\Ca_p$-subgroups, there is
an $s \in S $ such that $ f(a) = sas^{-1}$. But $ a \in Z(S) $, so
$ f(a) = a $. By \cite[Theorem~5.2.4]{MR569209}, $ f = 1_P $.

Now consider $ P \leq Z_l(S)$ and suppose that the result is known
for subgroups of $ Z_{l-1}(S)$. Let $ K $ be the subgroup of $P$
generated by $\Ca_p$-elements. Both $K$ and $[K,f]$ are
stabilized by $f$.

Consider a $\Ca_p$-element $ a \in P $. Since $S$ controls
fusion of $\Ca_p$-subgroups, there is $s \in S $ such that $
f(a) = sas^{-1} $ and so $[a,f] = [a,s] \in Z_{l-1}(S)$. Note that
$[K,f]$ is generated by elements of the form $ b[a,f]b^{-1} $, where
$a$, $b \in K $ and $ a $ is a $\Ca_p$-element. Therefore $[K,f] \leq
Z_{l-1}(S)$, so that the induction hypothesis applies: $ f $
restricted to $ [K,f]$ must be the identity. Since $f$ stabilizes
$[K,f]$, $ [K,f,f] = 1 $. But by \cite[Theorem 5.3.6]{MR569209}, $
[K,f,f]=[K,f]$ under these circumstances, so $ f $ restricted to $K$
is the identity as well. In particular, $f$ fixes the $\Ca_p$-elements
of $P$. Now \cite[Satz~IV.5.12]{MR0224703} implies that $f$ is the identity.
\end{proof}

\begin{definition}{\rm (\cite{MR2167090})}
{\rm For any saturated fusion system $\Ff$ over a finite $p$-group
$S$, the \emph{center} of $\Ff$ is the subgroup
\[ Z_{\Ff}(S) = \{ x \in Z(S) \mid f(x) = x \ \forall f \in \Mor(\Ff
^c) \} = \underset{\overleftarrow{\Ff ^c}}{\lim } \; Z(-) \] }
\end{definition}

The characteristic subgroup $\Omega_i(S)$ is the subgroup generated
by all elements $x$ such that $x^{p^i} =1$. The following result is
now a straightforward consequence of Theorem~\ref{control}.

\begin{corollary}
Let $(S,\Ff,\Ll)$ be a $p$-local finite group. If the $\Ca_p$-elements
of $S$ are in the center of $\Ff$, then $(S,\Ff,\Ll)$ is nilpotent.
In particular, if $p$ is odd and $\Omega _1 (S) \subseteq
Z _{\Ff}(S)$ (respectively $ p = 2 $ and $\Omega _2 (S) \subseteq Z
_{\Ff}(S)$), then $(S,\Ff,\Ll)$ is nilpotent.
\end{corollary}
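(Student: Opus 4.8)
The plan is to deduce this Corollary directly from Theorem~\ref{control}, which states that $(S,\Ff,\Ll)$ is nilpotent if and only if $S$ controls fusion of $\Ca_p$-subgroups. So the first step is to show that the hypothesis ``the $\Ca_p$-elements of $S$ lie in the center of $\Ff$'' forces $S$ to control fusion of $\Ca_p$-subgroups. Recall that a $\Ca_p$-subgroup is cyclic, generated by a single $\Ca_p$-element $x$; the first control-of-fusion condition (every $\Ca_p$-subgroup of $S$ is $\Ff$-conjugate to a subgroup of $S$) is a tautology for $H=S$, so only the second condition needs checking. Given a $\Ca_p$-subgroup $P=\langle x\rangle$ and any morphism $f:P\to S$ in $\Ff$, I need $f$ to be realized by $S$-conjugation. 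Here is where I use the hypothesis: by definition of the center of $\Ff$, we have $x\in Z_{\Ff}(S)$, which in particular means $x\in Z(S)$ and $g(x)=x$ for every $g\in\Mor(\Ff^c)$. Since every morphism in $\Ff$ restricts from (or extends within) the centric data via Alperin's fusion theorem, Theorem~\ref{AlperinFusion} --- more precisely, since $\langle x\rangle\le Z(S)$ is contained in every $\Ff$-centric subgroup and every $\Ff$-morphism out of $P$ can be extended to an $\Ff$-morphism between $\Ff$-centric subgroups containing $P$ --- one gets $f(x)=x$. Then $f$ is the restriction of conjugation by the identity element $1\in S$, so the second control-of-fusion condition holds with $h=1$. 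Hence $S$ controls fusion of $\Ca_p$-subgroups and Theorem~\ref{control} yields nilpotency.

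The one technical point to be careful about is the passage from ``$f(x)=x$ for all $f\in\Mor(\Ff^c)$'' to ``$f(x)=x$ for every $f\in\Hom_\Ff(P,S)$ with $P$ a $\Ca_p$-subgroup.'' This is exactly the kind of reduction Alperin's fusion theorem provides: any $\Ff$-isomorphism is a composite of automorphisms of fully normalized, $\Ff$-radical, $\Ff$-centric subgroups $Q_i$, each of which contains $Z(S)\supseteq\langle x\rangle$, and each of which fixes $x$ because $x\in Z_\Ff(S)$. A general $\Ff$-morphism factors as an $\Ff$-isomorphism followed by an inclusion by axiom~(b) of a fusion system, so it too fixes $x$. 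I would state this cleanly as: for $x\in Z_\Ff(S)$ and any $\phi\in\Hom_\Ff(P,S)$ with $x\in P$, one has $\phi(x)=x$. I expect this to be the only step requiring any argument, and it is short.

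For the ``in particular'' statement, I would simply observe that $\Omega_1(S)$ (for $p$ odd) is generated by elements of order dividing $p$, i.e.\ by $\Ca_p$-elements, and every $\Ca_p$-element generates a $\Ca_p$-subgroup and is itself a $\Ca_p$-element; similarly $\Omega_2(S)$ for $p=2$ is generated by elements of order dividing $4$, which are precisely the $\Ca_p$-elements at the prime $2$. Thus the containment $\Omega_1(S)\subseteq Z_\Ff(S)$ (resp.\ $\Omega_2(S)\subseteq Z_\Ff(S)$) says precisely that every generator of $\Omega_i(S)$, hence in particular every $\Ca_p$-element of $S$, lies in $Z_\Ff(S)$, so the hypothesis of the first part is satisfied and nilpotency follows.

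The main --- indeed only --- obstacle is making the Alperin-theoretic reduction in the first paragraph airtight, namely confirming that the defining property of $Z_\Ff(S)$ as a limit over $\Ff^c$ is enough to control \emph{all} $\Ff$-morphisms on $\Ca_p$-elements and not merely morphisms between centric subgroups; everything else is bookkeeping about the classes $\Ca_p$ and $\Omega_i$. Once that reduction is in hand, the Corollary is an immediate application of Theorem~\ref{control}.
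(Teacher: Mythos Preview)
Your approach is correct and matches the paper's, which simply states that the corollary is ``a straightforward consequence of Theorem~\ref{control}'' without further proof. Your Alperin reduction is exactly what is needed: each $Q_i$ in the Alperin decomposition is $\Ff$-centric, so $\phi_i\in\Mor(\Ff^c)$ fixes $x\in Z_\Ff(S)$, and inductively $x$ remains in each $P_i\leq Q_{i+1}$; the ``in particular'' clause is just the observation that $\Ca_p$-elements lie in $\Omega_1(S)$ (resp.\ $\Omega_2(S)$).
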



\bibliographystyle{alpha}\label{biblio}
\bibliography{nilpotent}


\end{document}